\newtheorem{theorem}{Theorem}[section]
\theoremstyle{definition}
\newtheorem{corollary}[theorem]{Corollary}
\newtheorem{lemma}[theorem]{Lemma}
\newtheorem{proposition}[theorem]{Proposition}
\newtheorem{example}[theorem]{Example}
\newtheorem{remark}[theorem]{Remark}
\numberwithin{equation}{subsection}
\begin{document}
\title[Twisted virtual braid group]{Twisted virtual braid group}
\author[V. Bardakov]{Valeriy G. Bardakov}
\author[T. Kozlovskaya]{Tatyana A. Kozlovskaya}
\author[K. Negi]{Komal Negi}
\author[M. Prabhakar]{Madeti Prabhakar}

\address{Sobolev Institute of Mathematics, 4 Acad. Koptyug avenue, 630090, Novosibirsk, Russia.}
\address{Novosibirsk State Agrarian University, Dobrolyubova street, 160, Novosibirsk, 630039, Russia.}
\address{Regional Scientific and Educational Mathematical Center of Tomsk State University,~36 Lenin Ave., Tomsk, Russia.}
\email{bardakov@math.nsc.ru}

\address{Regional Scientific and Educational Mathematical Center of Tomsk State University,~36 Lenin Ave., Tomsk, Russia.}
\email{t.kozlovskaya@math.tsu.ru}

\address{Indian Institute of Technology, Ropar-140001, Punjab, India}
\email{komal.20maz0004@iitrpr.ac.in}

\address{Indian Institute of Technology, Ropar-140001, Punjab, India}
\email{prabhakar@iitrpr.ac.in}

\subjclass[2010]{ 20E07, 20F36, 57K12}
\keywords{Braid group,  pure braid group,  quandle, link invariant.}

\begin{abstract}
In this paper we study some subgroups and their decompositions in semi-direct product   of the twisted virtual braid group $TVB_n$. 
In particular, the twisted virtual pure braid group  $TVP_n$ is the kernel of an epimorphism of  $TVB_n$ onto the symmetric group $S_n$.
We find the set of generators and defining relations for $TVP_n$ and show that  $TVB_n = TVP_n \rtimes S_n$.
Further we prove that $TVP_n$ is a semi-direct product of some subgroup and abelian group $\mathbb{Z}_2^n$. As corollary we get that the virtual pure braid group $VP_n$ is a subgroup of  $TVP_n$.
Also, we  construct some other epimorphism of  $TVB_n$ onto $S_n$. Its kernel, $TVH_n$  is an analogous of $TVP_n$. We find its set of generators  and defining relations and construct its decomposition in a semi-direct product.

\end{abstract}

\maketitle

\section{Introduction}


M. O. Bourgoin~\cite{Bour} took a pioneering role in developing the theory of twisted knots, which is a generalization the theory of classical knots and virtual knots. He introduced  the twisted Jones polynomial and the twisted knot group for twisted knot.
Following this, Naoko Kamada~\cite{N} introduced polynomial invariants and specialized algebraic structures called quandles, designed for the analysis of twisted knots and links. Moreover, in~\cite{NS}, S.~Kamada and N.~Kamada explores the use of biquandles in the study of twisted knots.

For the studying of twisted links in \cite{NPK} was introduced the twisted virtual braid group $TVB_n$ and was proved that the closing a twisted virtual braid leads to a twisted link (analogous of Alexander's theorem), also it was some analogous of the Markov theorem, explains that a twisted virtual braid achieves uniqueness up to modulo certain transformations called Markov moves. 

The group $TVB_n$ is some analogous of the the braid group $B_n$.  During some last decades were introduced and intensively studied some generalizations of  $B_n$  (see, for example, \cite{B, Bir, KL}  and references therein). For example, virtual braid group $VB_n$, welded  braid group $WB_n$,
singular  braid group $SB_n$ and some other.
Any group of this type  has a pure subgroup, which is the kernel of an epimorphism onto $S_n$. 
The pure braid group $P_n$ is the kernel of the epimorphism from braid group $B_n$ to the symmetric group $S_n$. The virtual pure braid group $VP_n$ is the kernel of the epimorphism of the virtual braid group $VB_n$ onto $S_n$ that maps, for each $i$,  $\sigma_i$ and $\rho_i$ to the  $\rho_i$. Here  $\langle \rho_1, \rho_2, \ldots, \rho_{n-1} \rangle \cong S_n$.

The twisted virtual braid group $TVB_n$ (see \cite{NPK})
is generated by three families of elements,
$$
\sigma_i, ~~\rho_i,~~i=1, 2, \ldots,n-1,~~\gamma_j,~~j=1, 2, \ldots, n.
$$
One can define some epimorphisms of $TVB_n$ onto $S_n$. The first one is
$$
\varphi_P \colon TVB_n \to S_n,~~\sigma_i \mapsto \rho_i, ~~\rho_i \mapsto \rho_i,~~i=1, 2, \ldots,n-1,~~\gamma_j \mapsto e,~~j=1, 2, \ldots, n.
$$
Its kernel $\ker(\varphi_P)$ is the twisted virtual pure braid group $TVP_n$. 
The second epimorphism  is
$$
\varphi_H \colon TVB_n \to S_n,~~\sigma_i \mapsto e, ~~\rho_i \mapsto \rho_i,~~i=1, 2, \ldots,n-1,~~\gamma_j \mapsto e,~~j=1, 2, \ldots, n.
$$
Its kernel $\ker(\varphi_H)$ is denoted by $TVH_n$. This group is some analogous of a subgroup of $VB_n$, which was introduced in \cite{R} and studied in \cite{BB}.  
The analogous of $TVH_n$ for the singular braid group $SB_n$ was defined in \cite{GKM}, where it was found a presentation of this subgroup in the case  $n=3$.

In this paper we study some subgroups, their presentation and decomposition into semi-direct product   of the twisted virtual braid group $TVB_n$. In particular, we proved  that
$$
TVB_n = TVP_n \rtimes S_n = TVH_n \rtimes S_n.
$$ 
These decompositions show that  for the studying of $TVB_n$ it is need to study $TVP_n$ and~$TVH_n$.

The paper is organized as follows. 
In Section \ref{BD}, we recall definitions and some known facts  from classical and virtual braid theories, specifically concerning the pure braid group and pure virtual braid group.

In Section \ref{pure}, we delve into the group structures of $TVB_n, TVP_n, TVH_n$, and $TS_n$. 
Additionally, we investigate the decomposition of the groups $TVP_n$ and $TVH_n$, specifically, we establish that $TVP_n = PL_n \rtimes A_n$ and $TVH_n = HL_n \rtimes A_n$, where $A_n \cong \mathbb{Z}_2^n$. 
Furthermore, we demonstrate that the subgroups $TVP_n$ and $TVH_n$ are non-isomorphic for all $n\geq 3$.

In Section \ref{PT}, we define the  endomorphisms 
$$
\varphi_{PT} \colon TVB_n \to TS_n,~~\sigma_i \mapsto \rho_i, ~~\rho_i \mapsto \rho_i,~~i=1, 2, \ldots,n-1,~~\gamma_j \mapsto \gamma_j,~~j=1, 2, \ldots, n,
$$
$$
\varphi_{HT} \colon TVB_n \to TS_n,~~\sigma_i \mapsto e, ~~\rho_i \mapsto \rho_i,~~i=1, 2, \ldots,n-1,~~\gamma_j \mapsto \gamma_j,~~j=1, 2, \ldots, n,
$$
where 
$$
TS_n = \langle \rho_1, \rho_2, \ldots, \rho_{n-1}, \gamma_1, \gamma_2, \ldots, \gamma_n \rangle \leq TVB_n.
$$
and find a set of generators and defining relations for the kernel $\ker(\varphi_{PT})$ and $\ker(\varphi_{HT})$ which we will denote by $PT_n$ and $HT_n$,  respectively,  and defining relations for the  image $Im(\varphi_{PT})$ and $Im(\varphi_{HT})$ of these  endomorphisms.  
It can be inferred from this result that the subgroup $TS_n$ is isomorphic to $A_n \rtimes S_n$.
Also, we proved that 
$$TVB_n = PT_n \rtimes TS_n = HT_n \rtimes TS_n.$$

At the end of the paper  we formulate some open problems and suggest directions for further research.

\section*{Acknowledgments}
This work  is supported by the Ministry of Science and Higher Education of Russia (agreement  No.  075-02-2022-884). The third author would like to thank the University Grants Commission(UGC), India, for Research Fellowship with NTA Ref.No.191620008047. The fourth author acknowledges the support given by SERB research project(MATRICS) with F.No.MTR/2021/000394. This work is supported by the NBHM, Government of India under grant-in-aid with F.No.02011/2/20223NBHM(R.P.)/R\&D II/970.

\bigskip

\section{Basic definitions} \label{BD}

In this section we recall some known definitions which can be found in \cite{Artin, Bir1, Mar}. 

The braid group $B_n$, $n\geq 2$, on $n$ strands can be defined as
a group generated by $\sigma_1,\sigma_2,\ldots,\sigma_{n-1}$ with the defining relations
\begin{equation}
\sigma_i \, \sigma_{i+1} \, \sigma_i = \sigma_{i+1} \, \sigma_i \, \sigma_{i+1},~~~ i=1,2,\ldots,n-2, \label{eq1}
\end{equation}
\begin{equation}
\sigma_i \, \sigma_j = \sigma_j \, \sigma_i,~~~|i-j|\geq 2. \label{eq2}
\end{equation}
The geometric interpretation of  $\sigma_i$, its inverse $\sigma_{i}^{-1}$ and the unit $e$ of $B_n$ are depicted  in the Figure~\ref{figure1}.
\begin{figure}[ht]
\includegraphics[totalheight=6cm]{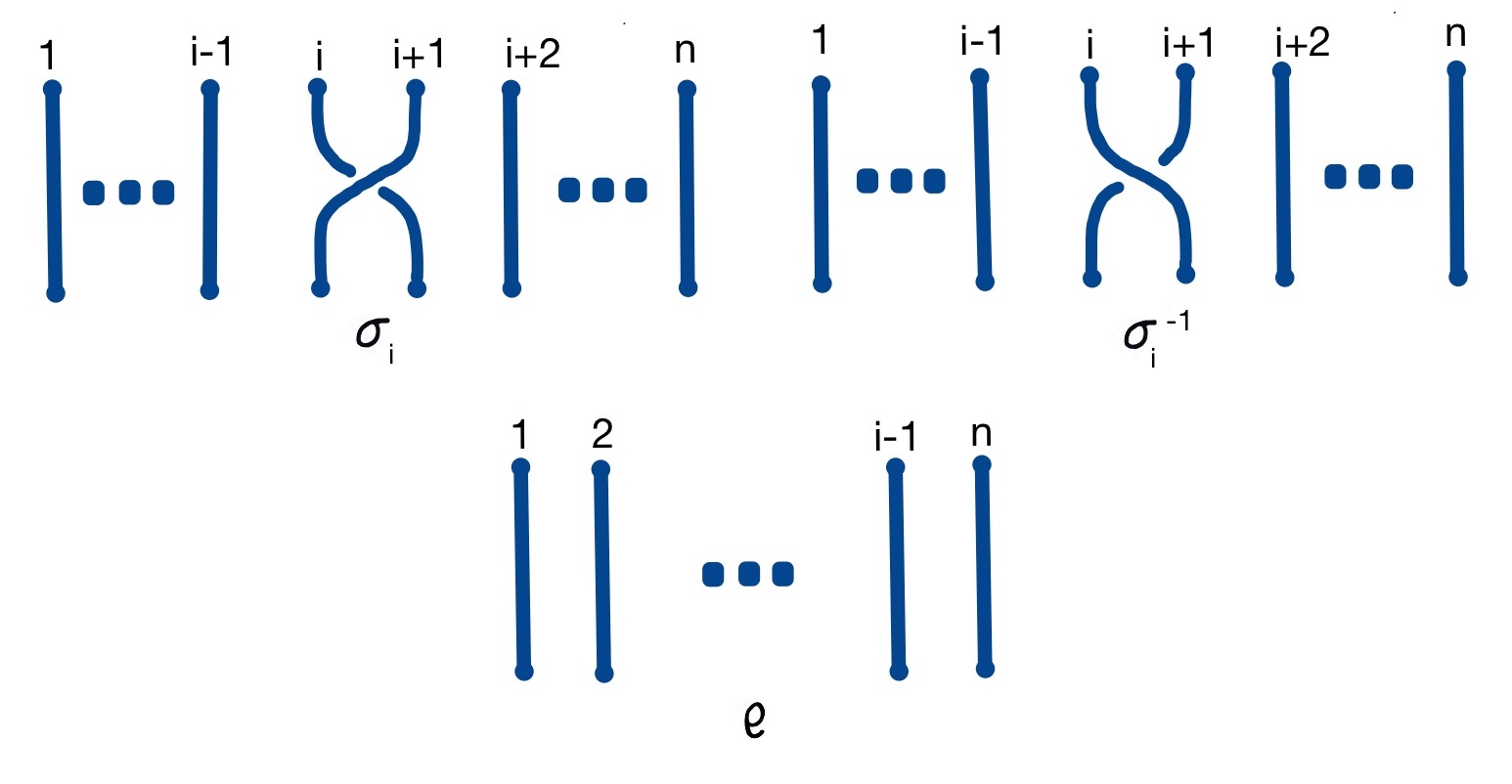}
\caption{The elementary braids $\sigma_i$, $\sigma_i^{-1}$ and the unit $e$} \label{figure1}
\end{figure}

There exists a homomorphism of $B_n$ onto the symmetric group $S_n$ on
$n$ symbols. This homomorphism  maps
 $\sigma_i$ to the transposition  $(i,i+1)$, $i=1,2,\ldots,n-1$.
The kernel of this homomorphism is called the
{\it pure braid group} and denoted by
$P_n$. The group $P_n$ is generated by elements $a_{ij}$, $1\leq i < j\leq n$.
These
elements can be expressed by the generators of
 $B_n$ as follows
$$
a_{i,i+1}=\sigma_i^2,
$$
$$
a_{ij} = \sigma_{j-1} \, \sigma_{j-2} \ldots \sigma_{i+1} \, \sigma_i^2 \, \sigma_{i+1}^{-1} \ldots
\sigma_{j-2}^{-1} \, \sigma_{j-1}^{-1},~~~i+1< j \leq n.
$$
In these generators $P_n$ is defined by relations
 \begin{align}
& a_{ik} a_{ij} a_{kj} = a_{kj} a_{ik} a_{ij},   \label{re2}\\
& a_{mj} a_{km} a_{kj} = a_{kj} a_{mj} a_{km},  ~\mbox{for}~m < j, \label{re3}\\
& (a_{km} a_{kj} a_{km}^{-1}) a_{im} = a_{im} (a_{km} a_{kj} a_{km}^{-1}),  ~\mbox{for}~i < k < m < j, \label{re4}\\
& a_{kj} a_{im} = a_{im} a_{kj},  ~\mbox{for}~k < i < m < j ~\mbox{or}~m < k. \label{re1}
\end{align}

Let $m_{kl} = \sigma_{k-1}  \, \sigma_{k-2} \ldots \sigma_l$ for $l < k$ and $m_{kl} = 1$
in other cases. Then the set
$$
\Lambda_n = \left\{ \prod\limits_{k=2}^n m_{k,j_k}~ |~ 1 \leq j_k
\leq k \right\}
$$
is a Schreier set of coset representatives of $P_n$ in $B_n$.

The subgroup $P_n$ is normal in $B_n$, and the quotient $B_n / P_n$ is  isomorphic to $S_n$. The generators of $B_n$ act on the generator $a_{ij} \in P_n$ by the rules:
 \begin{align}
& \sigma_k^{-1} a_{ij} \sigma_k =  a_{ij},  ~\mbox{for}~k \not= i-1, i, j-1, j, \label{c1}\\
& \sigma_{i}^{-1} a_{i,i+1} \sigma_{i} =  a_{i,i+1},   \label{c2}\\
& \sigma_{i-1}^{-1} a_{ij} \sigma_{i-1} =   a_{i-1,j},   \label{c3}\\
& \sigma_{i}^{-1} a_{ij} \sigma_{i} =  a_{i+1,j} [a_{i,i+1}^{-1}, a_{ij}^{-1}],  ~\mbox{for}~j \not= i+1 \label{c4}\\
& \sigma_{j-1}^{-1} a_{ij} \sigma_{j-1} =  a_{i,j-1},   \label{c5}\\
& \sigma_{j}^{-1} a_{ij} \sigma_{j} =  a_{ij} a_{i,j+1} a_{ij}^{-1},   \label{c6}
\end{align}
where $[a, b] = a^{-1} b^{-1} a b = a^{-1} a^b$.

Denote by
$$
U_{i} = \langle a_{1i}, a_{2i}, \ldots, a_{i-1,i} \rangle,~~~i = 2, \ldots, n,
$$
a subgroup of $P_n$.
It is known that $U_i$ is a free group of rank $i-1$. One can rewrite the defining relations of $P_n$ as the following conjugation rules (for $\varepsilon = \pm 1$):
  \begin{align}
& a_{ik}^{-\varepsilon} a_{kj}  a_{ik}^{\varepsilon} = (a_{ij} a_{kj})^{\varepsilon} a_{kj} (a_{ij} a_{kj})^{-\varepsilon},  \label{co1}\\
& a_{km}^{-\varepsilon} a_{kj}  a_{km}^{\varepsilon} = (a_{kj} a_{mj})^{\varepsilon} a_{kj} (a_{kj} a_{mj})^{-\varepsilon},  ~\mbox{for}~m < j, \label{co2}\\
& a_{im}^{-\varepsilon} a_{kj}  a_{im}^{\varepsilon} = [a_{ij}^{-\varepsilon}, a_{mj}^{-\varepsilon}]^{\varepsilon} a_{kj} [a_{ij}^{-\varepsilon}, a_{mj}^{-\varepsilon}]^{-\varepsilon},  ~\mbox{for}~i < k < m, \label{co3}\\
& a_{im}^{-\varepsilon} a_{kj} a_{im}^{\varepsilon} = a_{kj},  ~\mbox{for}~k < i < m < j ~\mbox{or}~  m < k. \label{co4}
\end{align}
The group $P_n$ is a semi--direct product of  the normal subgroup
$U_n$ and the group $P_{n-1}$. Similarly, $P_{n-1}$ is a semi--direct product of the free group
$U_{n-1}$  and the group $P_{n-2},$ and so on.
Therefore, $P_n$ is decomposable (see \cite{Mar}) into the following semi--direct product
$$
P_n=U_n\rtimes (U_{n-1}\rtimes (\ldots \rtimes
(U_3\rtimes U_2))\ldots),~~~U_i\cong F_{i-1}, ~~~i=2,3,\ldots,n.
$$

\subsection{Virtual braid group}
The {\it virtual braid group} $VB_n$  was introduced in \cite{Ka}. This group   is generated by the  braid group $B_n = \langle \sigma_1, \sigma_2, \ldots, \sigma_{n-1} \rangle$ and the symmetric group $S_n=\langle \rho_1, \rho_2,\ldots, \rho_{n-1} \rangle$  with the following relations:
\begin{align*}
\sigma_i \sigma_{i+1} \sigma_i&=\sigma_{i+1} \sigma_i \sigma_{i+1} & i=1, 2, \ldots, {n-2}, \\
\sigma_i \sigma_j&=\sigma_j \sigma_i &  |i-j| \geq 2, \\
\rho_i^{2}&=1 &  i=1, 2, \ldots, {n-1},\\
\rho_i \rho_j&= \rho_j \rho_i &  |i-j| \geq 2,\\
\rho_i \rho_{i+1} \rho_i&= \rho_{i+1} \rho_{i} \rho_{i+1} & i=1, 2, \ldots, {n-2},\\
\sigma_i \rho_j&= \rho_j \sigma_i & |i-j| \geq 2 ,\\
\rho_i \rho_{i+1} \sigma_i&= \sigma_{i+1} \rho_i \rho_{i+1} & i=1, 2, \ldots, {n-2}.
\end{align*}

The virtual pure braid group $VP_n$, $n\geq 2$, was introduced in  \cite{B} as the kernel of the homomorphism $VB_n \to S_n$, $\sigma_i \mapsto \rho_i$, 
$\rho_i \mapsto \rho_i$ for all $i=1, 2, \ldots, i-1$. $VP_n$
 admits a
presentation with the  generators $\lambda_{ij},\ 1\leq i\neq j\leq n,$
and the following relations:
\begin{align}
& \lambda_{ij}\lambda_{kl}=\lambda_{kl}\lambda_{ij} \label{rel},\\
&
\lambda_{ki}\lambda_{kj}\lambda_{ij}=\lambda_{ij}\lambda_{kj}\lambda_{ki}
\label{relation},
\end{align}
where distinct letters stand for distinct indices.
The generators of $VP_n$ can be expressed  in terms of  the generators of $VB_n$ by the formulas 
\begin{align*}
\lambda_{i,i+1} &= \rho_i \, \sigma_i,\\
  \lambda_{i+1,i} &= \rho_i \, \lambda_{i,i+1} \, \rho_i = \sigma_i \, \rho_i
\end{align*}
for  $i=1, 2, \ldots, n-1$, and
\begin{align*}
\lambda_{i,j} & = \rho_{j-1} \, \rho_{j-2} \ldots \rho_{i+1} \, \lambda_{i,i+1} \, \rho_{i+1} \ldots \rho_{j-2} \, \rho_{j-1}, \\ 
\lambda_{j,i} & =  \rho_{j-1} \, \rho_{j-2} \ldots \rho_{i+1} \, \lambda_{i+1,i} \, \rho_{i+1} \ldots \rho_{j-2} \, \rho_{j-1}
\end{align*} 
for $1 \leq i < j-1 \leq n-1$.


We have decomposition $VB_n = VP_n \rtimes S_n$ and $S_n$ acts on $VP_n$ by the rules

\begin{lemma}[\cite{B}] \label{form}
Let $a$ be an element of $\langle \rho_1, \rho_2, \ldots, \rho_{n-1} \rangle$ and $\bar{a}$ is its image in $S_n$ under the isomorphism $\rho_i \mapsto (i,i+1)$, $i = 1, 2, \ldots, n-1$, then for any generator $\lambda_{ij}$ of $VP_n$ the following holds
$$
a^{-1} \lambda_{ij} a = \lambda_{(i)\bar{a}, (j)\bar{a}},
$$
where $(k)\bar{a}$ is the image of $k$ under the action of the permutation $\bar{a}$.
\end{lemma}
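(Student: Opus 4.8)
The plan is to reduce the statement to the case $a=\rho_m$ (a single Coxeter generator of $\langle\rho_1,\dots,\rho_{n-1}\rangle\cong S_n$), and then to verify the resulting identities by a direct computation inside the presentation of $VB_n$.

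For the reduction I would first note that the index relabelling $\lambda_{ij}\mapsto\lambda_{(i)\bar a,(j)\bar a}$ gives a well-defined automorphism $\theta_{\bar a}$ of $VP_n$: since $\bar a$ is a bijection of $\{1,\dots,n\}$, it carries each defining relation of the form \eqref{rel} or \eqref{relation} to another relation of the same form, and $\theta_{\bar a}\theta_{\bar b}=\theta_{\overline{ab}}$ because $k\mapsto(k)\bar a$ is a right action. Since $VP_n$ is normal in $VB_n$ with complement $\langle\rho_1,\dots,\rho_{n-1}\rangle$, we also have the conjugation homomorphism $\bar a\mapsto(\lambda\mapsto a^{-1}\lambda a)$ into $\Aut(VP_n)$. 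Two homomorphisms $S_n\to\Aut(VP_n)$ coincide as soon as they agree on the Coxeter generators of $S_n$, and two automorphisms of $VP_n$ coincide as soon as they agree on the $\lambda_{ij}$; hence it suffices to prove
$$
\rho_m^{-1}\,\lambda_{ij}\,\rho_m=\lambda_{(i)\bar\rho_m,\,(j)\bar\rho_m}\qquad\text{for all }i\ne j\text{ and all }m .
$$
(One could equally run an induction on the length of a word representing $a$, peeling off one transposition at a time and using $\rho_m^{-1}=\rho_m$.)

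To establish these identities I would plug in the explicit expressions of $\lambda_{ij}$ in terms of the $\sigma_\ell$ and $\rho_\ell$ and argue by cases on where $m$ sits relative to $\{i,j\}$, with a secondary induction on $|j-i|$ based on the relations $\lambda_{ij}=\rho_{j-1}\lambda_{i,j-1}\rho_{j-1}$ and $\lambda_{ji}=\rho_{j-1}\lambda_{j-1,i}\rho_{j-1}$ for $j\ge i+2$, which are immediate from the defining formulas. When $\{m,m+1\}$ is disjoint from $\{i,i+1,\dots,j\}$ (say $i<j$; the case $i>j$ is symmetric) the generator $\rho_m$ commutes with every letter in the chosen word for $\lambda_{ij}$, so the left-hand side reduces to $\lambda_{ij}$, while $(i)\bar\rho_m=i$ and $(j)\bar\rho_m=j$. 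In the remaining cases $m\in\{i-1,i,\dots,j\}$ one pushes $\rho_m$ through the word using $\rho_\ell^{2}=1$, the braid relation $\rho_\ell\rho_{\ell+1}\rho_\ell=\rho_{\ell+1}\rho_\ell\rho_{\ell+1}$ and the mixed relation $\rho_\ell\rho_{\ell+1}\sigma_\ell=\sigma_{\ell+1}\rho_\ell\rho_{\ell+1}$, together with the secondary induction; the prototype is the nearest-neighbour case, where combining $\rho_{k-1}\rho_k\sigma_{k-1}=\sigma_k\rho_{k-1}\rho_k$ with the braid relation and $\rho_\ell^2=1$ yields $\rho_{k-1}\rho_k\sigma_k\rho_{k-1}=\rho_k\rho_{k-1}\sigma_{k-1}\rho_k$, i.e.\ $\rho_{k-1}^{-1}\lambda_{k,k+1}\rho_{k-1}=\lambda_{k-1,k+1}$.

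The hard part will be this base-case case-analysis. The delicate sub-cases are the boundary ones $m\in\{i-1,i,j-1,j\}$, where conjugation by $\rho_m$ must genuinely move one index, and the interior case $i<m<j-1$, where $\rho_m$ actually occurs in the chosen word for $\lambda_{ij}$ and yet must be shown to commute with it; both I would handle by peeling off the outer $\rho_{j-1}$'s through $\lambda_{ij}=\rho_{j-1}\lambda_{i,j-1}\rho_{j-1}$ and then applying the mixed and $\rho$-braid relations repeatedly, the nearest-neighbour identity above serving as the base of the induction. Everything else --- the passage from single transpositions to an arbitrary $a$, and the bookkeeping of how $\bar\rho_m$ permutes a pair of indices --- is routine.
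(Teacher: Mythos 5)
Your proposal is correct: the paper itself only cites \cite{B} for this lemma and gives no proof, but your argument --- reduce to a single generator $\rho_m$ via the homomorphism/word-length induction, then verify $\rho_m^{-1}\lambda_{ij}\rho_m=\lambda_{(i)\bar\rho_m,(j)\bar\rho_m}$ by cases on the position of $m$ using the recursion $\lambda_{ij}=\rho_{j-1}\lambda_{i,j-1}\rho_{j-1}$, the mixed relation $\rho_\ell\rho_{\ell+1}\sigma_\ell=\sigma_{\ell+1}\rho_\ell\rho_{\ell+1}$ and the $\rho$-braid relations --- is exactly the standard proof in the cited source and mirrors the paper's own proof of the analogous Lemma on the $\gamma_i$ (Lemma \ref{gamma}). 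Your prototype identity $\rho_{k-1}\rho_k\sigma_k\rho_{k-1}=\rho_k\rho_{k-1}\sigma_{k-1}\rho_k$ checks out, and the remaining boundary and interior cases do go through by the peeling argument you describe.
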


\medskip

It is possible to define another epimorphism $\mu  \colon VB_n \to S_n$ as follows:
$$
\mu(\sigma_i)=e, \; \mu(\rho_i)=\rho_i, \;i=1,2,\dots, n-1\, ,
$$
where $S_n$ is generated by $\rho_i$ for $i=1,2,\dots, n-1$.
Let us denote by $VH_n$ the normal closure of $B_n$ in $VB_n$.
It is evident that $\ker \mu$ coincides with $VH_n$.
Let us define elements:
$$
x_{i,i+1}=\sigma_i,~~x_{i+1,i}=\rho_i \sigma_i \rho_i =\rho_i x_{i,i+1} \rho_i,
$$
for $i= 1, 2, \ldots, n-1$, and 
$$
x_{i,j}=\rho_{j-1} \cdots \rho_{i+1} \sigma_i \rho_{i+1} \cdots \rho_{j-1},
$$
$$
x_{j,i}=\rho_{j-1} \cdots \rho_{i+1} \rho_i \sigma_i \rho_i \rho_{i+1} \cdots \rho_{j-1},
$$
for $1 \le i < j-1 \le n-1$.

The group $VH_n$ admits a presentation with the generators $x_{k,\, l},$ $1 \leq k \neq l \leq
n$,
and the defining relations:
\begin{equation} \label{eq40}
x_{i,j} \,  x_{k,\, l} = x_{k,\, l}  \, x_{i,j},
\end{equation}
\begin{equation} \label{eq41}
x_{i,k} \,  x_{k,j} \,  x_{i,k} =  x_{k,j} \,  x_{i,k} \, x_{k,j},
\end{equation}
where  distinct letters stand for distinct indices.

This presentation was found in \cite{R} (see also \cite{BB}). We have decomposition
$VB_n = VH_n \rtimes S_n$, where  $S_n=\langle \rho_1, \dots, \rho_{n-1} \rangle$ acts on the generators of $VH_n$
 by permutation of indices and we have the next analogous of Lemma \ref{form}.

\begin{lemma}[\cite{BB}] \label{form1}
Let $a$ be an element of $\langle \rho_1, \rho_2, \ldots, \rho_{n-1} \rangle$ and $\bar{a}$ is its image in the symmetric group $S_n$ under the isomorphism $\rho_i \mapsto (i,i+1)$, $i = 1, 2, \ldots, n-1$, then for any generator $x_{ij}$ of $VH_n$ the following holds
$$
a^{-1} x_{ij} a = x_{(i)\bar{a}, (j)\bar{a}},
$$
where $(k)\bar{a}$ is the image of $k$ under the action of the permutation $\bar{a}$.
\end{lemma}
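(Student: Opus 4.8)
The plan is to reduce the statement to the case where $a$ is a single generator $\rho_m$, and then to prove the resulting identity by a hands-on computation inside $VB_n$. For the reduction, write $a=\rho_{m_1}\rho_{m_2}\cdots\rho_{m_t}$; then $a^{-1}x_{ij}a=\rho_{m_t}\cdots\rho_{m_1}\,x_{ij}\,\rho_{m_1}\cdots\rho_{m_t}$ exhibits conjugation by $a$ as the composition of the conjugations by $\rho_{m_1},\dots,\rho_{m_t}$ applied from the inside out. Thus, once it is known that $\rho_m^{-1}x_{kl}\rho_m=x_{(k)(m,m+1),\,(l)(m,m+1)}$ for \emph{every} generator $x_{kl}$ of $VH_n$, applying this fact $t$ times moves the index pair $(i,j)$ successively by the transpositions $(m_1,m_1+1),\dots,(m_t,m_t+1)$; because $\rho_i\mapsto(i,i+1)$ is a homomorphism and the symbol action is on the right, the composite is $\bigl((i)\bar a,(j)\bar a\bigr)$, as required. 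So it remains to establish, for all $m$ and all $i\neq j$, that $\rho_m x_{ij}\rho_m=x_{(i)(m,m+1),\,(j)(m,m+1)}$, using $\rho_m=\rho_m^{-1}$.

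For this I would argue by a direct computation. First extract from the mixed relation $\rho_i\rho_{i+1}\sigma_i=\sigma_{i+1}\rho_i\rho_{i+1}$ the two auxiliary identities $\rho_i\rho_{i+1}\sigma_i\rho_{i+1}\rho_i=\sigma_{i+1}$ and $\rho_{i-1}\sigma_i\rho_{i-1}=\rho_i\sigma_{i-1}\rho_i$. Assume $i<j$ (the case $i>j$ is symmetric, with $x_{ij}$ conjugate to $\rho_j\sigma_j\rho_j$ in place of $\sigma_i$), and set $P=\rho_{j-1}\rho_{j-2}\cdots\rho_{i+1}$, so that $P^{-1}=\rho_{i+1}\cdots\rho_{j-1}$ (each $\rho_k$ is an involution) and $x_{ij}=P\sigma_i P^{-1}$. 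Now split into cases by the position of $m$ relative to $i$ and $j$. If $m\le i-2$ or $m\ge j+1$, then $\rho_m$ commutes with every letter of $P$ and with $\sigma_i$, so the conjugate equals $x_{ij}$ and $(m,m+1)$ fixes $i$ and $j$. If $i+1\le m\le j-2$, the braid relations among the $\rho$'s give $\rho_m P=P\rho_{m+1}$ (push $\rho_m$ past $\rho_{j-1},\dots,\rho_{m+2}$, apply $\rho_m\rho_{m+1}\rho_m=\rho_{m+1}\rho_m\rho_{m+1}$, then past $\rho_{m-1},\dots,\rho_{i+1}$), and since $\rho_{m+1}$ commutes with $\sigma_i$ the conjugate is again $x_{ij}$, with $(m,m+1)$ fixing $\{i,j\}$. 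The remaining cases $m\in\{i-1,i,j-1,j\}$ are the ones that move the index pair and use the auxiliary identities: for $m=i-1$ one obtains $(P\rho_i)\sigma_{i-1}(P\rho_i)^{-1}=x_{i-1,j}$; for $m=i$, moving $\rho_i$ inward and applying $\rho_i\rho_{i+1}\sigma_i\rho_{i+1}\rho_i=\sigma_{i+1}$ gives $x_{i+1,j}$; for $m=j-1$, the relation $x_{ij}=\rho_{j-1}x_{i,j-1}\rho_{j-1}$ (immediate from the definition) gives $x_{i,j-1}$; and for $m=j$ the conjugate is, letter for letter, the defining word of $x_{i,j+1}$. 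In each case the outcome is $x_{(i)(m,m+1),\,(j)(m,m+1)}$. The degenerate overlaps among $i-1,i,j-1,j$ that occur when $j=i+1$ only simplify matters: there $m=i$ swaps the two indices and $\rho_i\sigma_i\rho_i=x_{i+1,i}$.

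The only genuine content is recognizing the normal form $x_{ij}=P\sigma_i P^{-1}$ together with the two auxiliary identities; after that the proof is a finite, purely mechanical verification. Accordingly, the main obstacle is not conceptual but bookkeeping: making the case split on the position of $m$ exhaustive and keeping the index arithmetic correct in each branch — which of $i,j$ is fixed or shifted, and whether the $\rho$-chain in the defining word lengthens or shortens. No relation of $VB_n$ beyond those displayed above, and no new idea, should be needed.
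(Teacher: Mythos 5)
Your proposal is correct. Note that the paper itself offers no proof of this lemma --- it is quoted from the reference [BB] --- so there is nothing to diverge from; your argument (reduce to conjugation by a single $\rho_m$ via the homomorphism property, then verify $\rho_m x_{ij}\rho_m = x_{(i)(m,m+1),(j)(m,m+1)}$ by an exhaustive case split on the position of $m$ relative to $i$ and $j$, using $x_{ij}=P\sigma_iP^{-1}$ and the mixed relation $\rho_i\rho_{i+1}\sigma_i=\sigma_{i+1}\rho_i\rho_{i+1}$) is the standard direct verification, in the same spirit as the paper's own proof of the analogous Lemma~\ref{gamma}. The only place deserving slightly more care than your sketch gives it is the case $i>j$, where for $m=i-1$ one needs the additional identity $\rho_{i-1}\rho_i\sigma_i\rho_i\rho_{i-1}=\rho_i\rho_{i-1}\sigma_{i-1}\rho_{i-1}\rho_i$ (a consequence of the mixed relation together with $\rho_{i-1}\rho_i\rho_{i-1}=\rho_i\rho_{i-1}\rho_i$), but this is routine and the case analysis goes through.
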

\par

\subsection{The twisted virtual braid group}\label{tvbg} This group $TVB_n$ was defined in  \cite{NPK}. We get this group if add to the presentation of $VB_n$ the set of new generators,
$$\gamma_1, \gamma_2, \ldots, \gamma_n,$$
which satisfy the relations
\begin{equation}\label{rel-inverse-b}
    \gamma_i ^2 = e~~ \text{for}~~i=1,\ldots, n,
\end{equation}
\begin{equation}\label{rel-height-bb}
\gamma_i \gamma_j = \gamma_j \gamma_i~~\textrm{for}~~ i, j \in \{1,2,\ldots,n\}.
\end{equation}
Also, we add four types of mixed relations,
\begin{equation}\label{rel-height-bv}
    \gamma_j \rho_i = \rho_i \gamma_j~~\textrm{for}~~  j \not= i, i+1,
\end{equation}
\begin{equation}\label{rel-height-sb}
   \gamma_j \sigma_i = \sigma_i \gamma_j~~\textrm{for}~~  j \not= i, i+1,
\end{equation}
\begin{equation}\label{rel-bv}
    \rho_i \gamma_i = \gamma_{i+1} \rho_i,~~\textrm{for}~~ i \in \{1,2,\ldots,n-1\},
\end{equation}
\begin{equation}\label{rel-twist-III}
   ~~\rho_i \sigma_i \rho_i  = \gamma_{i+1} \gamma_{i} \, \sigma_i \, \gamma_{i}  \, \gamma_{i+1} ~~\textrm{for}~~ i \in \{1,2,\ldots,n-1\}.
\end{equation}

From this presentation we see that  the subgroup $A_n = \langle \gamma_1, \gamma_2, \ldots, \gamma_n \rangle$ of $TVB_n$ is the quotient of the abelian group $\mathbb{Z}_2^n$. Later (see Corollary \ref{isom}), we show that in fact, $A_n \cong \mathbb{Z}_2^n$.
\begin{figure}[ht]
  \centering
    \includegraphics[width=12cm,height=7cm]{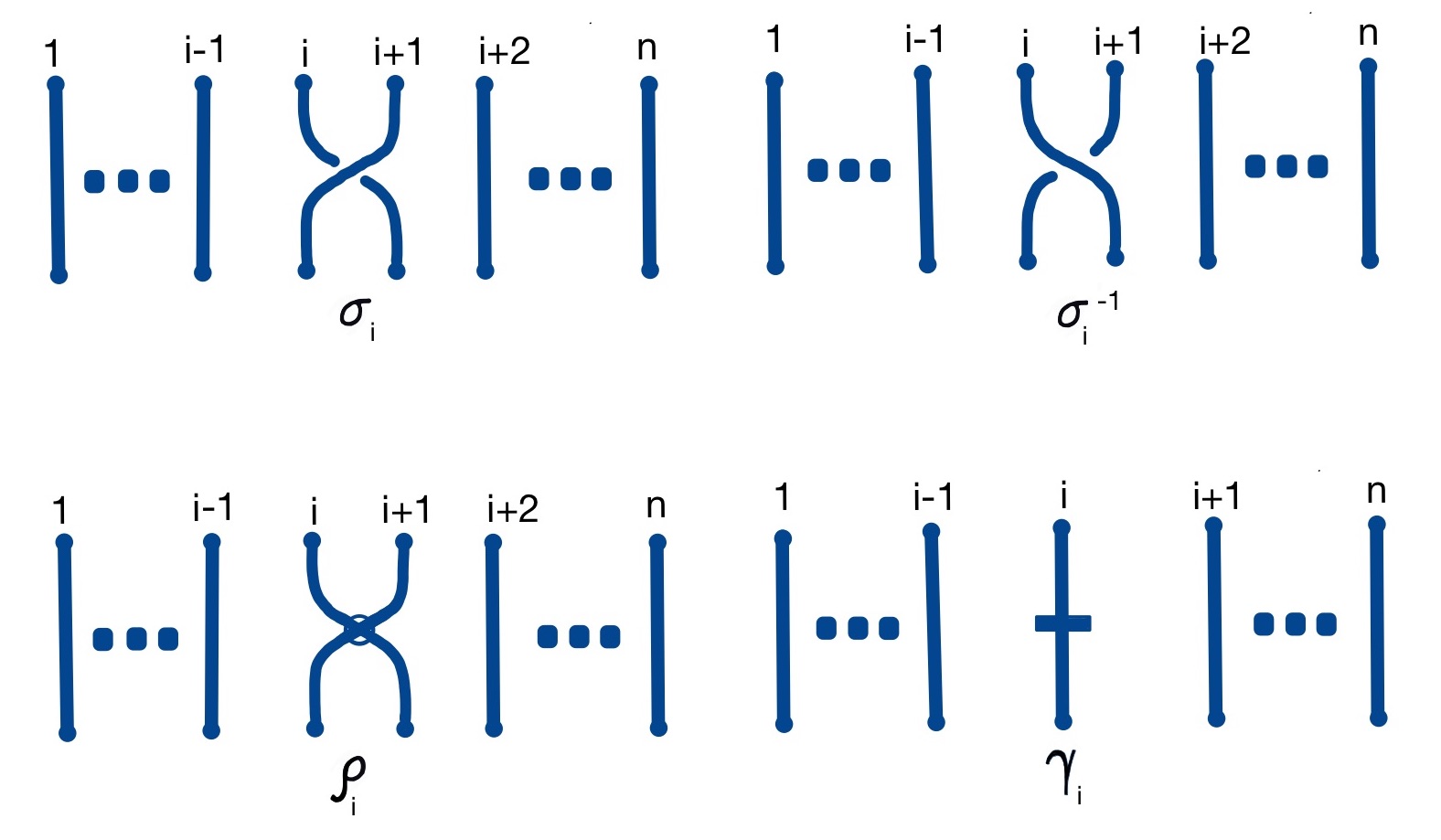}
        \caption{Generators of the group of twisted virtual braids}
        \label{gen}
        \end{figure}

\bigskip


\section{Some subgroups of $TVB_n$}  \label{pure}

One can define some epimorphism of $TVB_n$ onto $S_n$. The first one is
$$
\varphi_P \colon TVB_n \to S_n,~~\sigma_i \mapsto \rho_i, ~~\rho_i \mapsto \rho_i,~~i=1, 2, \ldots,n-1,~~\gamma_j \mapsto e,~~j=1, 2, \ldots, n.
$$
Its kernel $\ker(\varphi_P)$ is the twisted virtual pure braid group $TVP_n$.

\medskip
The second epimorphism  is
$$
\varphi_H \colon TVB_n \to S_n,~~\sigma_i \mapsto e, ~~\rho_i \mapsto \rho_i,~~i=1, 2, \ldots,n-1,~~\gamma_j \mapsto e,~~j=1, 2, \ldots, n.
$$
Its kernel $\ker(\varphi_H)$ is denoted by $TVH_n$.

It is clear that $TVP_n$  and $TVH_n$ are a normal subgroup of index $n!$ of $TVB_n$ and we have  short exact sequences,
$$
1 \to TVP_n \to TVB_n \to S_n \to 1,
$$
$$
1 \to  TVH_n \to  TVB_n \to S_n \to 1.
$$

\subsection{Structure of twisted virtual pure braid group $TVP_n$.}
 
Consider the following elements:
$$\lambda_{i,i+1}=\rho_i\sigma_i^{-1},\text{   } \lambda_{i+1,i}=\rho_i\lambda_{i,i+1}\rho_i, \text{   }  i=1, 2, \ldots, n-1,$$
$$\lambda_{i,j}=\rho_{j-1}\rho_{j-2}\cdots\rho_{i+1}\lambda_{i,i+1}\rho_{i+1}\cdots\rho_{j-2}\rho_{j-1},$$
$$\lambda_{j,i}=\rho_{j-1}\rho_{j-2}\cdots\rho_{i+1}\lambda_{i+1,i}\rho_{i+1}\cdots\rho_{j-2}\rho_{j-1},\text{   }  1 \leq i \leq j-1 \leq n-1,$$
which generate $VP_n$.

Before proving presentation of $TVP_n$, we require some results. The next lemma is analogous to Lemma \ref{form} and Lemma \ref{form1}.

\begin{lemma}\label{gamma}
The group $S_n$ acts by conjugation on the set $\{\gamma_i~ |~ 1\leq i\leq n\}$. This action is transitive. Let $a$ be an element of $\langle \rho_1, \rho_2, \ldots, \rho_{n-1} \rangle$ and $\bar{a}$ is its image in $S_n$ under the isomorphism $\rho_i \mapsto (i,i+1)$, $i = 1, 2, \ldots, n-1$, then for any generator $\gamma_{i}$ of $TVP_n$ the following holds
$$
a^{-1} \gamma_{i} a = \gamma_{(i)\bar{a}},
$$
where $(k)\bar{a}$ is the image of $k$ under the action of the permutation $\bar{a}$.
\end{lemma}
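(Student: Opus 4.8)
The plan is to reduce the statement to the effect of conjugation by a single generator $\rho_i$ and then bootstrap to an arbitrary $a \in \langle \rho_1, \ldots, \rho_{n-1}\rangle$ by induction on word length.

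\emph{Base case.} Since $\rho_i^2 = e$ in $TVB_n$, we have $\rho_i^{-1} = \rho_i$. Relation \eqref{rel-bv} reads $\rho_i \gamma_i = \gamma_{i+1}\rho_i$, hence $\rho_i^{-1}\gamma_i\rho_i = \rho_i\gamma_i\rho_i = \gamma_{i+1}$; rewriting the same relation as $\gamma_i = \rho_i^{-1}\gamma_{i+1}\rho_i$ gives $\rho_i^{-1}\gamma_{i+1}\rho_i = \gamma_i$; and relation \eqref{rel-height-bv} gives $\rho_i^{-1}\gamma_k\rho_i = \gamma_k$ for $k \neq i,i+1$. Thus conjugation by $\rho_i$ permutes $\{\gamma_1,\ldots,\gamma_n\}$, exchanging $\gamma_i$ with $\gamma_{i+1}$ and fixing the rest, i.e. it acts on the indices exactly as the transposition $(i,i+1) = \overline{\rho_i}$: in symbols $\rho_i^{-1}\gamma_k\rho_i = \gamma_{(k)\overline{\rho_i}}$ for all $k$. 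In particular $A_n$ is invariant under conjugation by each $\rho_i$, so $S_n$ indeed acts on the set $\{\gamma_i\}$ as claimed.

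\emph{Induction.} Write a general $a \in \langle\rho_1,\ldots,\rho_{n-1}\rangle$ as $a = \rho_{i_1}\cdots\rho_{i_m}$ and set $b = \rho_{i_1}\cdots\rho_{i_{m-1}}$. Assuming the formula for $b$ and using the base case for the last letter,
$$
a^{-1}\gamma_k a = \rho_{i_m}^{-1}\bigl(b^{-1}\gamma_k b\bigr)\rho_{i_m} = \rho_{i_m}^{-1}\gamma_{(k)\bar b}\,\rho_{i_m} = \gamma_{((k)\bar b)\overline{\rho_{i_m}}} = \gamma_{(k)\bar a},
$$
where the last equality uses $\bar a = \bar b\,\overline{\rho_{i_m}}$ under the homomorphism $\rho_i \mapsto (i,i+1)$, with $S_n$ acting on $\{1,\ldots,n\}$ on the right. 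This establishes the displayed formula for every $a$. Transitivity is then immediate: given $i,j$, pick $\pi \in S_n$ with $(i)\pi = j$ and a word $a$ in the $\rho_k$ representing $\pi$, so that $a^{-1}\gamma_i a = \gamma_j$.

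I do not expect a genuine obstacle here; the only points needing care are in the base case — extracting $\rho_i^{-1}\gamma_{i+1}\rho_i = \gamma_i$ from \eqref{rel-bv} (and recalling $\rho_i^{-1} = \rho_i$), and being consistent about the left/right-action convention so that composition of permutations matches multiplication in $\langle\rho_1,\ldots,\rho_{n-1}\rangle$; this is precisely why the statement is phrased with $a^{-1}(\,\cdot\,)a$ rather than $a(\,\cdot\,)a^{-1}$.
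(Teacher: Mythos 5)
Your proof is correct and follows essentially the same route as the paper: both establish the single-generator conjugation rules $\rho_i\gamma_i\rho_i=\gamma_{i+1}$, $\rho_i\gamma_{i+1}\rho_i=\gamma_i$, and $\rho_k\gamma_j\rho_k=\gamma_j$ otherwise from relations (\ref{rel-inverse-b}), (\ref{rel-height-bv}), (\ref{rel-bv}), and then deduce the general formula. You merely make explicit the induction on word length that the paper leaves implicit with ``the lemma follows from these rules.''
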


\begin{proof}
Using relations  (\ref{rel-inverse-b}), (\ref{rel-height-bv}), and (\ref{rel-bv}), it is easy to see that  the following 
 conjugation rules hold in $TVB_n$:

\begin{itemize}
    \item[(i)] $\rho_i\gamma_i\rho_i=\gamma_{i+1}  \Leftrightarrow \rho_i\gamma_{i+1}\rho_i=\gamma_{i}$, for $ 1\leq i < n$;
    \item[(ii)] $\rho_k\gamma_i\rho_k=\gamma_{i}$ for $ 1\leq i\neq k\leq n$.
\end{itemize}
The lemma follows from these rules. 
\end{proof}

\begin{remark}\label{remarkgamma}
To simplify the notation we will write  $a^{-1} \gamma_i  a =\gamma_{a(i)}$ instead $a^{-1} \gamma_{i} a = \gamma_{(i)\bar{a}}$ for $a \in S_n$.
\end{remark}

To find generators and  defining relations for  $TVP_n$, we use the Reidemeister-Schreier method (see, for example  \cite[Chapter 2.3]{MKS}). As a Schreier set of coset representation of $TVP_n$ in $TVB_n$ we take the same set $\Lambda_n$, which is used  in $VB_n$,
$$
\Lambda_n = \left\{ \prod\limits_{k=2}^n m_{k,j_k}~ |~ 1 \leq j_k
\leq k \right\}
$$
where $m_{kl}=\rho_{k-1}\rho_{k-2}\cdots \rho_l$ for $l<k$ and $m_{kl}=1$ in the other cases. 

The first main result of the present section is

\begin{theorem}\label{sTVP_n}
    The group $TVP_n$ admits a presentation with the generators \\ $\lambda_{kl},~1 \leq k\neq l \leq n$, and $\gamma_j$, $1\leq j\leq n$. The defining relations are as follows:
     \begin{align}
\lambda_{ij}\lambda_{kl} &=\lambda_{kl}\lambda_{ij},\label{comm-clas}\\
\lambda_{ki}(\lambda_{kj}\lambda_{ij}) &=(\lambda_{ij}\lambda_{kj})\lambda_{ki},\label{classical}\\
\gamma_i^2 &=1,\label{g1}\\
\gamma_i\gamma_j &=\gamma_j\gamma_i,\label{g2}\\
\lambda_{ij}\gamma_k & =\gamma_k\lambda_{ij},\label{g3}\\
\lambda_{ij} &=\gamma_i\gamma_j\lambda_{ji}\gamma_j\gamma_i,\label{g4}
\end{align}
where distinct letters stand for distinct indices.
\end{theorem}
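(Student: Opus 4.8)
The plan is to run the Reidemeister--Schreier procedure on the normal subgroup $TVP_n=\ker(\varphi_P)$ of $TVB_n$, taking $\Lambda_n$ as the Schreier transversal (this is legitimate: $\varphi_P$ is surjective, $[TVB_n:TVP_n]=|S_n|=n!=|\Lambda_n|$, and $\Lambda_n$ is a Schreier set just as in $B_n$ and $VB_n$). First I would determine the Schreier generators $S_{\Lambda,x}=\Lambda\,x\,\overline{\Lambda x}^{\,-1}$, $\Lambda\in\Lambda_n$, $x\in\{\sigma_i,\rho_i,\gamma_j\}$. Since $\Lambda_n$ consists of words in the $\rho_k$ and $\varphi_P$ restricts to an isomorphism of $\langle\rho_1,\dots,\rho_{n-1}\rangle$ onto $S_n$, we have $\overline{\Lambda\rho_i}=\Lambda\rho_i$ in $TVB_n$, so every $S_{\Lambda,\rho_i}$ is trivial. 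Because $\varphi_P(\gamma_j)=e$ we have $\overline{\Lambda\gamma_j}=\Lambda$, hence $S_{\Lambda,\gamma_j}=\Lambda\gamma_j\Lambda^{-1}$, which by Lemma~\ref{gamma} is one of $\gamma_1,\dots,\gamma_n$. Finally $\overline{\Lambda\sigma_i}=\overline{\Lambda\rho_i}=\Lambda\rho_i$, so $S_{\Lambda,\sigma_i}=\Lambda\sigma_i\rho_i\Lambda^{-1}=\Lambda\,\lambda_{i,i+1}^{-1}\,\Lambda^{-1}$; since conjugation by elements of $\langle\rho_1,\dots,\rho_{n-1}\rangle$ permutes the indices of the $\lambda$'s --- the analogue of Lemma~\ref{form}, proved by the same argument --- this equals $\lambda_{kl}^{-1}$ for a suitable ordered pair $(k,l)$, and every ordered pair occurs. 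After the obvious Tietze moves (deleting the trivial $\rho$-generators and relabelling the others), $TVP_n$ is therefore generated by $\gamma_1,\dots,\gamma_n$ together with $\lambda_{kl}$, $1\le k\ne l\le n$: the asserted generating set.

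Next I would rewrite $\Lambda\,r\,\Lambda^{-1}$ for every defining relator $r$ of $TVB_n$ and every $\Lambda\in\Lambda_n$ and collect the resulting relations. The relators inherited from $VB_n$ --- the braid relations (\ref{eq1}),(\ref{eq2}), the purely $\rho$-relations, and the mixed $\sigma$--$\rho$ relations --- are exactly the relators of $VB_n$, so their rewriting produces precisely the identifications among the generators $S_{\Lambda,\sigma_i}$ and the relations (\ref{comm-clas}) and (\ref{classical}) (these are the relations (\ref{rel}),(\ref{relation}) of $VP_n$); this is word-for-word the computation carried out in \cite{B}, up to the harmless interchange $\sigma_i\leftrightarrow\sigma_i^{-1}$ in the formulas for the $\lambda$'s, which is an automorphism of $VB_n$, and I would simply quote it. The relators $\gamma_i^2$ and $[\gamma_i,\gamma_j]$ give at once (\ref{g1}) and (\ref{g2}). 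The relators (\ref{rel-height-bv}) and (\ref{rel-height-sb}) rewrite to the commutations $[\lambda_{kl},\gamma_m]=e$ for $m\notin\{k,l\}$, i.e.\ (\ref{g3}), together with equalities of the form $S_{\Lambda\rho_i,\gamma_j}=S_{\Lambda,\gamma_j}$ that are absorbed in the relabelling of the $\gamma$-generators; the relator (\ref{rel-bv}) contributes only further equalities $S_{\Lambda\rho_i,\gamma_i}=S_{\Lambda,\gamma_{i+1}}$ of the same sort (it is these that justify Lemma~\ref{gamma}). The new input is the relator (\ref{rel-twist-III}): for $\Lambda=e$ one computes, using $\lambda_{i,i+1}=\rho_i\sigma_i^{-1}$, $\lambda_{i+1,i}=\rho_i\lambda_{i,i+1}\rho_i$, the conjugation rules $\rho_i\gamma_i\rho_i=\gamma_{i+1}$, $\rho_i\gamma_{i+1}\rho_i=\gamma_i$, and (\ref{rel-bv}), that it rewrites to $\lambda_{i+1,i}^{-1}\,\gamma_i\gamma_{i+1}\,\lambda_{i,i+1}\,\gamma_i\gamma_{i+1}=e$, which is relation (\ref{g4}) for the pair $(i+1,i)$ (modulo (\ref{g2})); conjugating by a general $\Lambda$ --- invoking Lemma~\ref{gamma} and the analogue of Lemma~\ref{form} --- yields (\ref{g4}) for every ordered pair $(k,l)$, the two orderings of a pair giving equivalent relations by (\ref{g1}),(\ref{g2}).

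Finally I would carry out the Tietze simplification showing that, modulo the relabellings above, the complete Reidemeister--Schreier output reduces to exactly the six families (\ref{comm-clas})--(\ref{g4}) and to nothing else. I expect the main obstacle to be precisely this last step: there are $n!$ conjugates of each relator, and one must check --- using transitivity of the $S_n$-action on $\{\gamma_1,\dots,\gamma_n\}$ and on the ordered index pairs of the $\lambda_{kl}$, as recorded in Lemma~\ref{gamma} and the analogue of Lemma~\ref{form} --- that every one of them rewrites to a relation already on the list, so that no spurious relation survives. Everything genuinely new relative to the known presentation of $VP_n$ is confined to the handling of the relators (\ref{rel-bv}) and (\ref{rel-twist-III}), and that is where I would concentrate the verification.
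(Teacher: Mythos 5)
Your proposal is correct and follows essentially the same route as the paper: the Reidemeister--Schreier method applied to $\ker(\varphi_P)$ with the transversal $\Lambda_n$, quoting \cite{B} for the relators inherited from $VB_n$, and rewriting the extra relators $\gamma_i^2$, $[\gamma_i,\gamma_j]$, (\ref{rel-height-sb})/(\ref{rel-height-bv}) and (\ref{rel-twist-III}) to obtain (\ref{g1})--(\ref{g4}), with Lemma~\ref{gamma} and the analogue of Lemma~\ref{form} controlling the conjugates over $\Lambda_n$. Your computation of $\tau(r_4)$ for $\Lambda=e$ matches the paper's, and your explicit flagging of the final Tietze reduction (checking that no spurious relations survive among the $n!$ conjugates) is, if anything, more careful than the paper's treatment of that step.
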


\begin{proof}\label{main}
    Define the map $\Bar{}: TVB_n \to \Lambda_n$ which takes an element $w \in TVB_n$ to its representative $\overline{w}$ from $\Lambda_n$. In this case the element $w\overline{w}^{-1}$ belongs to $TVP_n$. By Theorem 2.7 of~\cite{MKS} the group $TVP_n$ is generated by
    $$
    s_{\lambda,a}=\lambda a \cdot (\overline{\lambda a})^{-1},~~~ \lambda \in \Lambda_n, ~~a \in \{\rho_1, \ldots, \rho_{n-1}, \sigma_1, \ldots, \sigma_{n-1}, \gamma_1, \ldots, \gamma_ n\}.
    $$
Then all   $s_{\lambda, \rho_i}=e$ and $s_{\lambda, \sigma_i}=\lambda (s_{e,\sigma_i} )\lambda^{-1}=\lambda (\sigma_i\rho_i)\lambda^{-1}=\lambda (\lambda_{i,i+1}^{-1})\lambda^{-1}$, which is equal to some $\lambda_{kl}$, by Lemma~\ref{form}.
   These calculations are done in Theorem 1~\cite{B}.
   
    Now, consider the generators $$s_{\lambda,\gamma_i}=\lambda(s_{e,\gamma_i})\lambda^{-1}.$$
    Since $s_{e,\gamma_i}=\gamma_i$. $s_{\lambda,\gamma_i}=\lambda(\gamma_i)\lambda^{-1}$, which is equal to some $\gamma_j$ by Lemma~\ref{gamma}.
   Therefore, generators of the group $TVP_n$ are $\lambda_{kl}$, $1 \leq k\neq l \leq n$ and, $\gamma_j$, $1\leq j\leq n$.

   To find the defining relations of $TVP_n$, we define a rewriting process $\tau$. It helps to rewrite a word $u$ to $\tau(u)$, where $u$ is written in the  generators of $TVB_n$ but represents an element of $TVP_n$ and $\tau(u)$ is a word written in  the generators of $TVP_n$. Let us associate to reduce word 
$$
u=a_1^{\epsilon_1}a_2^{\epsilon_2}\cdots a_v^{\epsilon_v}, ~~\epsilon_l=\pm 1, ~~a_l \in \{\sigma_1,\sigma_2, \ldots, \sigma_{n-1}, \rho_1,\rho_2, \ldots, \rho_{n-1}, \gamma_1,\gamma_2, \ldots, \gamma_n\},
$$
 the word
   $$\tau(u)=s_{k_1,a_1}^{\epsilon_1}s_{k_2,a_2}^{\epsilon_2}\cdots s_{k_v,a_v}^{\epsilon_v},$$
   in the generators of $TVP_n$, where $k_j$ is the $(j-1)$-th initial segment of the word $u$ if $\epsilon_j=1$, and a representative of the $j$-th initial segment of $u$ if $\epsilon_j=-1$.

   By Theorem 2.9 in~\cite{MKS}, the group $TVP_n$ is defined by the relations
   $$r_{\mu, \lambda}=\tau(\lambda r_\mu \lambda^{-1})=\lambda\tau( r_\mu) \lambda^{-1}, ~~~\lambda \in \Lambda_n,$$
   where $r_\mu$ is a defining relation of $TVB_n$.

   Common relations of $VB_n$ and $TVB_n$ give rise to relations (\ref{comm-clas}), and (\ref{classical}) proved in Theorem 1~\cite{B}.

Let us consider added relations stated in subsection~\ref{tvbg}.
   Denote by $r_1=\gamma_i^2$ the first   relation of $TVB_n$ which is not a relation of  $VB_n$. Then
   \begin{align*}
       r_{1,e} = \tau(r_1)& = s_{e,\gamma_i}s_{\bar{\gamma_i}, \gamma_i}\\
                         & = s_{e,\gamma_i}s_{e, \gamma_i}\\
                            & = (e\cdot\gamma_i(\overline{e\cdot\gamma_i})^{-1})^2\\
                            & = \gamma_i^2.
   \end{align*}
   The remaining $r_{1,\lambda}$, $\lambda \in \Lambda_n$, can be obtained from this relation using conjugation by $\lambda^{-1}$ and it gives the same relation, by Lemma~\ref{gamma}. We have obtained (\ref{g1}).

   Now, consider the next relation $r_2=\gamma_i\gamma_j\gamma_i\gamma_{j}.$
   We have,  \begin{align*}
              r_{2,e} = \tau(r_2)& = s_{e,\gamma_i}s_{\bar{\gamma_i},\gamma_j}s_{\overline{\gamma_i\gamma_j},\gamma_i}s_{\overline{\gamma_i\gamma_j\gamma_i}, \gamma_j}\\
                         & = s_{e,\gamma_i}s_{e,\gamma_j}s_{e, \gamma_i}s_{e,\gamma_j}\\
                        & = \gamma_i \gamma_j \gamma_i \gamma_j.
            \end{align*}
The remaining $r_{2,\lambda}$, $\lambda \in \Lambda_n$, will give the same relations. We have obtained (\ref{g2}).

Consider the next relation $r_3= \sigma_i\gamma_k\sigma_i^{-1}\gamma_k$, $k\neq i,i+1$,
\begin{align*}
           r_{3,e} = \tau(r_3)& = s_{e,\sigma_i}s_{\bar{\sigma_i},\gamma_k}s^{-1}_{\overline{\sigma_i\gamma_k\sigma_i^{-1}},\sigma_i}s_{\overline{ \sigma_i\gamma_k\sigma_i^{-1}}, \gamma_k} \\
                         & = s_{e,\sigma_i}s_{\rho_i,\gamma_k}s^{-1}_{e, \sigma_i}s_{e,\gamma_k}\\
                        & = \lambda^{-1}_{i,i+1} \rho_i \gamma_k \rho_i\lambda_{i,i+1}\gamma_k\\
                        & = \lambda^{-1}_{i,i+1}\gamma_k\lambda_{i,i+1}\gamma_k.
            \end{align*}
            The remaining relations $r_{3,\lambda}, \lambda \in \Lambda_n$, can be obtained from this relation using conjugation by $\lambda^{-1}$. By Lemma~\ref{gamma}, and Lemma~\ref{form}, we obtain (\ref{g3}).

Let us consider the last relation $r_4=\rho_i\sigma_i\rho_i\gamma_{i+1}\gamma_i\sigma^{-1}_i\gamma_i\gamma_{i+1}$,
\begin{align*}
              r_{4,e} = \tau(r_4)= & s_{e,\rho_i}s_{\bar{\rho_i},\sigma_i}s_{\overline{\rho_i\sigma_i},\rho_i},s_{\overline{\rho_i\sigma_i\rho_i},\gamma_{i+1}}s_{\overline{\rho_i\sigma_i\rho_i\gamma_{i+1}},\gamma_i}s^{-1}_{\overline{\rho_i\sigma_i\rho_i\gamma_{i+1}\gamma_i\sigma^{-1}_i},\sigma_i}\\
 & s_{\overline{\rho_i\sigma_i\rho_i\gamma_{i+1}\gamma_i\sigma^{-1}_i},\gamma_i}s_{\overline{\rho_i\sigma_i\rho_i\gamma_{i+1}\gamma_i\sigma^{-1}_i\gamma_i},\gamma_{i+1}}\\
                         = & 
  s_{e,\rho_i}s_{\rho_i,\sigma_i}s_{e,\rho_i}s_{\rho_i,\gamma_{i+1}}s_{\rho_i,\gamma_{i}}s^{-1}_{e, \sigma_i}s_{e,\gamma_i}s_{e,\gamma_{i+1}}\\
                        =  & \lambda^{-1}_{i+1,i} \gamma_{i}\gamma_{i+1}\lambda_{i,i+1}\gamma_i\gamma_{i+1}.
            \end{align*}
Conjugating this relation by all representatives from $\Lambda_n$, we obtain (\ref{g4}).

Therefore, the group $TVP_n$ is defined by the relations (\ref{comm-clas})-(\ref{g4}).
\end{proof}
We can establish an alternative representation of the group $TVP_n$ by applying the relations (\ref{g4}) as outlined in Theorem \ref{sTVP_n}.

\begin{corollary}\label{1sTVP_n}
    The group $TVP_n$ admits a presentation with the generators \\ $\lambda_{kl},~1 \leq k< l \leq n$, and $\gamma_j$, $1\leq j\leq n$. The defining relations are as follows:
     \begin{align}
\lambda_{ij}\lambda_{kl} &=\lambda_{kl}\lambda_{ij},\label{1comm-clas}\\
\lambda_{ki}(\lambda_{kj}\lambda_{ij}) &=(\lambda_{ij}\lambda_{kj})\lambda_{ki},\label{classical1}\\
\lambda_{ki}^{(ki)}(\lambda_{kj}^{(kj)}\lambda_{ij}^{(ij)}) &=(\lambda_{ij}^{(ij)}\lambda_{kj}^{(kj)})\lambda_{ki}^{(ki)},\label{classical2}\\
\lambda_{ki}^{(ki)}(\lambda_{ij}\lambda_{kj}) &=(\lambda_{kj}\lambda_{ij})\lambda_{ki}^{(ki)},\label{classical3}\\
\lambda_{ki}(\lambda_{ij}^{(ij)}\lambda_{kj}^{(kj)}) &=(\lambda_{kj}^{(kj)}\lambda_{ij}^{(ij)})\lambda_{ki},\label{classical4}\\
\lambda_{kj}(\lambda_{ki}\lambda_{ij}^{(ij)}) &=(\lambda_{ij}^{(ij)}\lambda_{ki})\lambda_{kj},\label{classical5}\\
\lambda_{kj}^{(kj)}(\lambda_{ki}^{(ki)}\lambda_{ij}) &=(\lambda_{ij}\lambda_{ki}^{(ki)})\lambda_{kj}^{(kj)},\label{classical6}\\
\gamma_i^2 &=1,\label{1g1}\\
\gamma_i\gamma_j &=\gamma_j\gamma_i,\label{1g2}\\
\lambda_{ij}\gamma_k & =\gamma_k\lambda_{ij}\label{1g3},
\end{align}
 where we denoted $\lambda_{ij}^{(ij)}=\lambda_{ij}^{\gamma_i\gamma_j}$ and, as usual,  distinct letters stand for distinct indices.
\end{corollary}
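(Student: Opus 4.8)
The plan is to derive the presentation of Corollary~\ref{1sTVP_n} from the one established in Theorem~\ref{sTVP_n} by a sequence of Tietze transformations. The key observation is that relation~(\ref{g4}), together with~(\ref{g1}) and~(\ref{g2}), can be read as
\[
\lambda_{kl} = \gamma_k\gamma_l\,\lambda_{lk}\,\gamma_l\gamma_k = \lambda_{lk}^{\gamma_l\gamma_k}\qquad (k>l),
\]
i.e.\ it expresses each ``reversed'' generator $\lambda_{kl}$ with $k>l$ as a word in $\lambda_{lk}$ (with $l<k$) and $\gamma_l,\gamma_k$. First I would use these relations to eliminate, one at a time, all generators $\lambda_{kl}$ with $k>l$, substituting $\lambda_{lk}^{(lk)}$ for them throughout the remaining relations. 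Note that the instances of~(\ref{g4}) with $k<l$ become trivial after this substitution (they reduce to $\lambda_{kl}=\lambda_{kl}$ by~(\ref{g1}),~(\ref{g2})) and may be discarded, so that after the elimination the generating set is exactly $\{\lambda_{kl}\mid k<l\}\cup\{\gamma_j\}$, as claimed.

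Next I would track the fate of the surviving families of relations. Relations~(\ref{g1}) and~(\ref{g2}) involve no $\lambda$'s, so they are untouched and become~(\ref{1g1}) and~(\ref{1g2}). For~(\ref{g3}), the instances with $i<j$ become~(\ref{1g3}) verbatim, while the instances with $i>j$ turn into the assertion that $\lambda_{ji}^{(ji)}$ commutes with $\gamma_k$; this is an immediate consequence of~(\ref{1g3}) (applied to $\lambda_{ji}$) and~(\ref{1g2}) (applied to $\gamma_i,\gamma_j$), hence redundant. Similarly, the commutation relations~(\ref{comm-clas}), taken over all orderings of the four distinct indices, yield~(\ref{1comm-clas}) together with a number of ``twisted'' variants in which one or both of $\lambda_{ij},\lambda_{kl}$ is replaced by a conjugate $\lambda^{(\cdot)}$; since the four indices are distinct, (\ref{1g3}) shows that $\gamma_i,\gamma_j$ centralize $\lambda_{kl}$ and conversely, so every twisted variant follows from~(\ref{1comm-clas}) alone and can be removed.

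The heart of the argument is~(\ref{classical}), which in Theorem~\ref{sTVP_n} holds for every triple of distinct indices $i,j,k$. Splitting into the $3!=6$ orderings of $\{i,j,k\}$ and replacing each reversed generator $\lambda_{ba}$ ($a<b$) by $\lambda_{ab}^{(ab)}$ produces precisely the six relations~(\ref{classical1})--(\ref{classical6}): the ordering $k<i<j$ requires no substitution and gives~(\ref{classical1}), whereas the remaining five orderings, in which one, two or all three of the pairs occur reversed, give the conjugated forms~(\ref{classical2})--(\ref{classical6}) (this matches the corollary's notation, since $\lambda_{ba}^{(ba)}=\lambda_{ab}$ when $b>a$). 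Conversely, each relation~(\ref{1comm-clas})--(\ref{1g3}) is visibly a relation of $TVP_n$, being obtained from an old relation by the substitution that is itself a consequence of~(\ref{g4}). Collecting the surviving relations --- (\ref{1comm-clas}), the six instances~(\ref{classical1})--(\ref{classical6}), and~(\ref{1g1}),~(\ref{1g2}),~(\ref{1g3}) --- then yields the Tietze equivalence.

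I expect the main obstacle to be the bookkeeping: carefully enumerating the orderings that occur in~(\ref{classical}) and in~(\ref{comm-clas}), carrying out the substitution $\lambda_{ba}\mapsto\lambda_{ab}^{(ab)}$ correctly in each, and checking that all the variant commutation and centralizer relations are genuine consequences of the retained ones, so that they may be discarded. The group-theoretic content is slight --- it is essentially the same ``eliminate the $\gamma$-conjugate'' device used to pass between the two standard presentations of $VP_n$ --- but one must stay attentive to the conjugating words and to the convention $\lambda_{ij}^{(ij)}=\lambda_{ij}^{\gamma_i\gamma_j}$, including the case where the subscript is itself in reversed order.
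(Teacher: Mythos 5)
Your proposal is correct and follows exactly the route the paper intends: the paper offers no proof beyond the remark that the corollary is obtained ``by applying the relations (\ref{g4})'', and your Tietze-transformation argument --- eliminating the generators $\lambda_{kl}$ with $k>l$ via $\lambda_{kl}=\lambda_{lk}^{(lk)}$, discarding the now-redundant instances of (\ref{g4}), (\ref{g3}) and the twisted variants of (\ref{comm-clas}), and reading off the six relations (\ref{classical1})--(\ref{classical6}) from the six orderings of a triple in (\ref{classical}) --- is precisely the intended derivation, consistent with the explicit $TVP_3$ example worked out later in the paper.
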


\begin{example}
Structure of $TVP_1$ and $TVP_2$ are shown as below:
\begin{itemize}
    \item[1)~~~] $TVP_1=\langle \gamma_1~|~ \gamma_1^2 = e \rangle \cong \mathbb{Z}_2$.
    
    \item[2)~~~] $TVP_2= \langle \lambda_{12},  \lambda_{21}, \gamma_1, \gamma_2~ |~ 
\lambda_{12}=\gamma_1 \gamma_2 \lambda_{21} \gamma_2 \gamma_1, ~~\gamma_1^2=\gamma_2^2=e, ~~\gamma_1\gamma_2=\gamma_2\gamma_1 \rangle.$
We can remove the generator $\lambda_{21}$ and the first relation. Then,
$$
TVP_2= \langle \lambda_{12}, \gamma_1, \gamma_2~ |~ \gamma_1^2=\gamma_2^2=e, ~~\gamma_1\gamma_2=\gamma_2\gamma_1, \rangle \cong \mathbb{Z} * (\mathbb{Z}_2 \times \mathbb{Z}_2).
$$
\end{itemize}
\end{example}

From the definition of $TVP_n$, Lemma~\ref{gamma}, and Lemma~\ref{form} it follows 

\begin{corollary}
$TVB_n=TVP_n \rtimes S_n.$
\end{corollary}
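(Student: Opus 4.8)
The plan is to recognize this as a routine consequence of the splitting of the short exact sequence $1 \to TVP_n \to TVB_n \xrightarrow{\varphi_P} S_n \to 1$ already displayed above. First I would check that the assignment $\rho_i \mapsto \rho_i$ ($i = 1, \ldots, n-1$) defines a homomorphism $\iota \colon S_n \to TVB_n$. By the presentation of $TVB_n$ recalled in Subsection \ref{tvbg}, the elements $\rho_1, \ldots, \rho_{n-1}$ of $TVB_n$ satisfy $\rho_i^2 = e$, $\rho_i \rho_j = \rho_j \rho_i$ for $|i-j| \geq 2$, and $\rho_i \rho_{i+1} \rho_i = \rho_{i+1} \rho_i \rho_{i+1}$, which are exactly the Coxeter relations of $S_n$; hence $\iota$ is well defined. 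Since $\varphi_P(\rho_i) = \rho_i$, we have $\varphi_P \circ \iota = \mathrm{id}_{S_n}$, so $\iota$ is injective and $\langle \rho_1, \ldots, \rho_{n-1} \rangle$ is a copy of $S_n$ inside $TVB_n$ that is a section of $\varphi_P$.

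Next I would invoke the standard splitting lemma: from $\varphi_P \circ \iota = \mathrm{id}$ it follows that $TVP_n \cap \iota(S_n) = \{e\}$ (if $w = \iota(a) \in TVP_n$ then $e = \varphi_P(w) = a$, so $w = e$) and that $TVP_n \cdot \iota(S_n) = TVB_n$ (for $g \in TVB_n$ one has $g \cdot \iota(\varphi_P(g))^{-1} \in \ker \varphi_P = TVP_n$). Since $TVP_n$ is normal, this gives $TVB_n = TVP_n \rtimes \iota(S_n) \cong TVP_n \rtimes S_n$.

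Finally I would make the action explicit, so that the semidirect product decomposition is matched with the presentation of $TVP_n$ from Theorem \ref{sTVP_n}: for $a \in S_n$ with associated permutation $\bar a$, Lemma \ref{form} gives $a^{-1} \lambda_{ij} a = \lambda_{\bar a(i), \bar a(j)}$ and Lemma \ref{gamma} gives $a^{-1} \gamma_i a = \gamma_{\bar a(i)}$, so $\iota(S_n)$ acts on $TVP_n$ simply by simultaneously permuting all indices of the generators $\lambda_{kl}$ and $\gamma_j$. This action preserves the relations \eqref{comm-clas}--\eqref{g4}, which confirms consistency but is not needed for the bare statement.

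I do not expect a real obstacle here: the substantive work — producing generators and defining relations for $\ker(\varphi_P)$ — is already done in Theorem \ref{sTVP_n}, and the corollary only repackages it. The single point deserving a moment's care is the well-definedness and injectivity of the section $\iota$, both of which are immediate, respectively, from the presentation of $TVB_n$ and from $\varphi_P \circ \iota = \mathrm{id}_{S_n}$.
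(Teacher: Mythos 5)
Your proof is correct and follows essentially the same route as the paper, which simply cites the definition of $TVP_n$ together with Lemmas \ref{form} and \ref{gamma}; you have merely written out the standard splitting argument (well-definedness of the section $\rho_i \mapsto \rho_i$, injectivity from $\varphi_P \circ \iota = \mathrm{id}$, and the explicit permutation action on the generators) in full detail.
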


\color{black}
\subsection{Decomposition of $TVP_n$} There exists  an epimorphism 
$$
\psi_P \colon TVP_n \to A_n,~~\lambda_{kl} \mapsto e,~~\gamma_j \mapsto \gamma_j,
$$ 
where  
$$
A_n = \langle \gamma_1, \ldots, \gamma_n \rangle.
$$
Analysing the presentation of $TVP_n$ and the image of $\psi_P$, we get

\begin{corollary} \label{isom}
The subgroup $A_n$ of $TVP_n$ has a presentation
$$
A_n = \langle \gamma_1, \ldots, \gamma_n ~|~\gamma_1^2 = \ldots =  \gamma_n^2 = e, ~~\gamma_i \gamma_j = \gamma_j \gamma_i.~1 \leq i < j \leq n \rangle,
$$
i.e.  is isomorphic to  $\mathbb{Z}_2^n$.
\end{corollary}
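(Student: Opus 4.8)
The plan is to exhibit the natural quotient map $\mathbb{Z}_2^n \twoheadrightarrow A_n$ explicitly and then construct a one-sided inverse for it, which forces it to be an isomorphism. First I would record the surjection. The relations (\ref{rel-inverse-b}) and (\ref{rel-height-bb}) hold in $TVB_n$, hence in its subgroup $TVP_n$, so the elements $\gamma_1,\ldots,\gamma_n$ are pairwise commuting involutions; consequently the assignment $g_j \mapsto \gamma_j$ defines an epimorphism $q \colon \mathbb{Z}_2^n \to A_n$, where $\mathbb{Z}_2^n = \langle g_1,\ldots,g_n \mid g_j^2 = e,\ g_ig_j = g_jg_i \rangle$. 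The whole content of the corollary is thus that $q$ is injective.

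For this I would verify that the assignment $\lambda_{kl}\mapsto e$, $\gamma_j\mapsto g_j$ extends to a homomorphism $\theta\colon TVP_n \to \mathbb{Z}_2^n$; this is just the epimorphism $\psi_P$ with its target identified with the abstract group $\mathbb{Z}_2^n$. Using the presentation of $TVP_n$ provided by Theorem \ref{sTVP_n}, one checks that every defining relator is sent to the identity: relations (\ref{comm-clas}) and (\ref{classical}) involve only the $\lambda_{kl}$, all of which map to $e$; relations (\ref{g1}) and (\ref{g2}) are exactly the defining relations of $\mathbb{Z}_2^n$; relation (\ref{g3}) becomes $e\cdot g_k = g_k\cdot e$; and in (\ref{g4}) the word $\gamma_i\gamma_j\lambda_{ji}\gamma_j\gamma_i$ maps to $g_ig_jg_jg_i = e$, which agrees with the image $e$ of $\lambda_{ij}$. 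Hence $\theta$ is well defined.

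Finally, restricting $\theta$ to the subgroup $A_n = \langle \gamma_1,\ldots,\gamma_n\rangle \le TVP_n$ yields a homomorphism $A_n \to \mathbb{Z}_2^n$ with $\gamma_j \mapsto g_j$, and the composite $\theta|_{A_n}\circ q \colon \mathbb{Z}_2^n \to \mathbb{Z}_2^n$ sends $g_j \mapsto \gamma_j \mapsto g_j$, so it is the identity. Therefore $q$ is injective; being already surjective, it is an isomorphism, and $A_n$ has the asserted presentation of $\mathbb{Z}_2^n$.

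I do not anticipate a genuine obstacle: the argument is a retraction/splitting computation. The one point needing a little care is relation (\ref{g4}), where it is essential that both $\lambda_{ij}$ and $\lambda_{ji}$ are killed by $\theta$, so that the conjugating factors $\gamma_i\gamma_j$ cancel in the abelian target; once that is observed, well-definedness of $\theta$ is immediate and the rest is formal.
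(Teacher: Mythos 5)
Your proposal is correct and is essentially the paper's own argument: the paper defines the retraction $\psi_P\colon TVP_n\to A_n$ killing the $\lambda_{kl}$ and deduces the corollary by ``analysing the presentation of $TVP_n$ and the image of $\psi_P$,'' which is precisely your well-definedness check on the relations of Theorem~\ref{sTVP_n} followed by the observation that the composite $\mathbb{Z}_2^n\to A_n\to\mathbb{Z}_2^n$ is the identity. Your explicit verification of relation~(\ref{g4}) is the one point the paper leaves implicit, and you handle it correctly.
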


Denote by $PL_n = \ker(\psi_P)$. Then $TVP_n = PL_n   \rtimes A_n$.

It is interesting to find a structure of $PL_n$. Using the relations (\ref{g4}) in Theorem \ref{sTVP_n},
$$
\lambda_{ji}=\gamma_i\gamma_j\lambda_{ij}\gamma_j\gamma_i,~~~1  \leq i < j \leq n,
$$
we can remove all the generators $\lambda_{ji}$, $1  \leq i < j \leq n,$ from the generating set of $TVP_n$.
In particular, in the case $n = 3$,  using the relations
$$
\lambda_{21} = \lambda_{12}^{ \gamma_1 \gamma_2},~~~\lambda_{31} = \lambda_{13}^{ \gamma_1 \gamma_3},~~~ 
\lambda_{32} = \lambda_{23}^{ \gamma_2 \gamma_3},
$$ 
 we get
$$
TVP_3 =  \langle \lambda_{12}, \lambda_{13}, \lambda_{23},  \gamma_1, \gamma_2, \gamma_3~|~
$$
$$\lambda_{12}\lambda_{13}\lambda_{23}=\lambda_{23}\lambda_{13}\lambda_{12}\text{, \hspace{0.3cm}}\lambda_{12}^{\gamma_1 \gamma_2} \lambda_{23}\lambda_{13}=\lambda_{13}\lambda_{23}\lambda_{12}^{\gamma_1 \gamma_2}\text{, \hspace{0.3cm}}\lambda_{13}\lambda_{12}\lambda_{23}^{\gamma_2 \gamma_3}=\lambda_{23}^{\gamma_2 \gamma_3} \lambda_{12}\lambda_{13},$$
$$\lambda_{13}^{\gamma_1 \gamma_3} \lambda_{23}^{\gamma_2 \gamma_3} \lambda_{12}=\lambda_{12}  \lambda_{23}^{\gamma_2 \gamma_3} 
\lambda_{13}^{\gamma_1 \gamma_3} \text{, \hspace{0.3cm}}\lambda_{23} \lambda_{12}^{\gamma_1 \gamma_2} \lambda_{13}^{\gamma_1 \gamma_3}=
\lambda_{13}^{\gamma_1 \gamma_3} \lambda_{12}^{\gamma_1 \gamma_2}\lambda_{23} \text{, \hspace{0.3cm}} \lambda_{23}^{\gamma_2 \gamma_3} \lambda_{13}^{\gamma_1 \gamma_3} \lambda_{12}^{\gamma_1 \gamma_2}=\lambda_{12}^{\gamma_1 \gamma_2}\lambda_{13}^{\gamma_1 \gamma_3} \lambda_{23}^{\gamma_2 \gamma_3},$$
$$ \text{\hspace{0.3cm}} \gamma_1\lambda_{23}=\lambda_{23} \gamma_1 \text{, \hspace{0.3cm}} \gamma_1 \lambda_{23}^{\gamma_2 \gamma_3}=\lambda_{23}^{\gamma_2 \gamma_3}\gamma_1 \text{, \hspace{0.3cm}} \gamma_2\lambda_{13}^{\gamma_1 \gamma_3} = \lambda_{13}^{\gamma_1 \gamma_3} \gamma_2,  $$
$$\text{\hspace{0.3cm}} \gamma_2\lambda_{13}=\lambda_{13} \gamma_2 \text{, \hspace{0.3cm}} \gamma_3 \lambda_{12} = \lambda_{12} \gamma_3 \text{, \hspace{0.3cm}} \gamma_3\lambda_{12}^{\gamma_1 \gamma_2}=\lambda_{12}^{\gamma_1 \gamma_2}\gamma_3,~~rel(A_3) 
\rangle,
$$
where $rel(A_3)$ means the set of defining relations in $A_3$. The relations
$$  \gamma_1 \lambda_{23}^{\gamma_2 \gamma_3}=\lambda_{23}^{\gamma_2 \gamma_3}\gamma_1 \text{, \hspace{0.3cm}} \gamma_2\lambda_{13}^{\gamma_1 \gamma_3} = \lambda_{13}^{\gamma_1 \gamma_3} \gamma_2, 
\text{\hspace{0.3cm}}  \gamma_3\lambda_{12}^{\gamma_1 \gamma_2}=\lambda_{12}^{\gamma_1 \gamma_2}\gamma_3
$$
follows from other relations and we can remove them. We get
$$
TVP_3 =  \langle \lambda_{12}, \lambda_{13}, \lambda_{23},  \gamma_1, \gamma_2, \gamma_3~|~
$$
$$\lambda_{12}\lambda_{13}\lambda_{23}=\lambda_{23}\lambda_{13}\lambda_{12}\text{, \hspace{0.3cm}}
\lambda_{12}^{\gamma_1 \gamma_2}\lambda_{13}^{\gamma_1 \gamma_3} \lambda_{23}^{\gamma_2 \gamma_3}=\lambda_{23}^{\gamma_2 \gamma_3} \lambda_{13}^{\gamma_1 \gamma_3} \lambda_{12}^{\gamma_1 \gamma_2},
$$

$$
\lambda_{12}^{\gamma_1 \gamma_2} \lambda_{23}\lambda_{13}=\lambda_{13}\lambda_{23}\lambda_{12}^{\gamma_1 \gamma_2}\text{, \hspace{0.3cm}}
 \lambda_{12}  \lambda_{23}^{\gamma_2 \gamma_3} 
\lambda_{13}^{\gamma_1 \gamma_3}  =\lambda_{13}^{\gamma_1 \gamma_3} \lambda_{23}^{\gamma_2 \gamma_3} \lambda_{12},
$$

$$
\lambda_{13}\lambda_{12}\lambda_{23}^{\gamma_2 \gamma_3}=\lambda_{23}^{\gamma_2 \gamma_3} \lambda_{12}\lambda_{13}\text{, \hspace{0.3cm}}
\lambda_{13}^{\gamma_1 \gamma_3} \lambda_{12}^{\gamma_1 \gamma_2}\lambda_{23}=\lambda_{23} \lambda_{12}^{\gamma_1 \gamma_2} \lambda_{13}^{\gamma_1 \gamma_3},
$$

$$\gamma_1\lambda_{23}=\lambda_{23} \gamma_1 \text{, \hspace{0.3cm}}  \gamma_2\lambda_{13}=\lambda_{13} \gamma_2 \text{, \hspace{0.3cm}} \gamma_3 \lambda_{12} = \lambda_{12} \gamma_3 \text{, \hspace{0.3cm}}~~rel(A_3) 
\rangle.
$$

Let us introduce elements
$$
\lambda_{12}^{(0)} =  \lambda_{12},~~ \lambda_{12}^{(1)} =  \lambda_{12}^{\gamma_1},~~\lambda_{12}^{(2)} =  \lambda_{12}^{\gamma_2},
~~\lambda_{12}^{(12)} =  \lambda_{12}^{\gamma_1\gamma_2},
$$
$$
\lambda_{13}^{(0)} =  \lambda_{13},~~\lambda_{13}^{(1)} =  \lambda_{13}^{\gamma_1},~~\lambda_{13}^{(3)} =  \lambda_{13}^{\gamma_3},
~~\lambda_{13}^{(13)} =  \lambda_{13}^{\gamma_1\gamma_3},
$$
$$
\lambda_{23}^{(0)} =  \lambda_{23},~~\lambda_{23}^{(2)} =  \lambda_{23}^{\gamma_2},~~\lambda_{23}^{(3)} =  \lambda_{23}^{\gamma_3},
~~\lambda_{23}^{(23)} =  \lambda_{23}^{\gamma_2\gamma_3}.
$$

Using the Reidemeister-Schreier method it is not difficult to prove

\begin{proposition}
The group $PL_3$ is generated by elements 
$$
\lambda_{ij}^{(0)}, ~\lambda_{ij}^{(i)},~ \lambda_{ij}^{(j)},~ \lambda_{ij}^{(ij)}, ~~1\leq i<j \leq 3,
$$
 and is defined by the relations which can be found from the relations
$$
\lambda_{12}^{(0)} \lambda_{13}^{(0)} \lambda_{23}^{(0)} =\lambda_{23} ^{(0)} \lambda_{13} ^{(0)} \lambda_{12}^{(0)} \text{, \hspace{0.3cm}}
\lambda_{12}^{(12)}\lambda_{13}^{(13)} \lambda_{23}^{(23)}=\lambda_{23}^{(23)} \lambda_{13}^{(13)}\lambda_{12}^{(12)},
$$

$$
\lambda_{12}^{(12)} \lambda_{23}^{(0)} \lambda_{13}^{(0)} =\lambda_{13}^{(0)} \lambda_{23}^{(0)} \lambda_{12}^{(12)}\text{, \hspace{0.3cm}}
 \lambda_{12}^{(0)}  \lambda_{23}^{(23)}
\lambda_{13}^{(13)} =\lambda_{13}^{(13)} \lambda_{23}^{(23)} \lambda_{12}^{(0)},
$$

$$
\lambda_{13}^{(0)} \lambda_{12}^{(0)} \lambda_{23}^{(23)}=\lambda_{23}^{(23)} \lambda_{12}^{(0)} \lambda_{13}^{(0)} \text{, \hspace{0.3cm}}
\lambda_{13}^{(13)} \lambda_{12}^{(12)}\lambda_{23}^{(0)} = \lambda_{23}^{(0)}  \lambda_{12}^{(12)} \lambda_{13}^{(13)},
$$
using the conjugations by elements
$$
\gamma_1^{\varepsilon_1} \gamma_2^{\varepsilon_2} \gamma_3^{\varepsilon_3},~~~\varepsilon_1, \varepsilon_2, \varepsilon_3 \in \{ 0, 1 \}.
$$
\end{proposition}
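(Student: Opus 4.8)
The plan is to run the Reidemeister--Schreier procedure on the short exact sequence $1 \to PL_3 \to TVP_3 \to A_3 \to 1$ induced by $\psi_P$, starting from the reduced presentation of $TVP_3$ on the generators $\lambda_{12},\lambda_{13},\lambda_{23},\gamma_1,\gamma_2,\gamma_3$ displayed just above. Since $A_3\cong\mathbb{Z}_2^3$ by Corollary~\ref{isom}, the $8$ elements $\gamma_1^{\varepsilon_1}\gamma_2^{\varepsilon_2}\gamma_3^{\varepsilon_3}$, $\varepsilon_i\in\{0,1\}$, written in this fixed order with trivial factors deleted, form a prefix-closed (Schreier) transversal $T$ for $PL_3$ in $TVP_3$. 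First I would compute the Schreier generators $s_{t,x}=tx(\overline{tx})^{-1}$ for $t\in T$ and $x\in\{\lambda_{12},\lambda_{13},\lambda_{23},\gamma_1,\gamma_2,\gamma_3\}$. For $x=\gamma_i$ the word $t\gamma_i$ represents an element of $A_3$ and hence equals its own coset representative, so $s_{t,\gamma_i}=e$. For $x=\lambda_{kl}$, since $\lambda_{kl}\in PL_3=\ker\psi_P$ the elements $t\lambda_{kl}$ and $t$ lie in the same coset, so $\overline{t\lambda_{kl}}=t$ and $s_{t,\lambda_{kl}}=t\lambda_{kl}t^{-1}$; because the $\gamma_i$ are commuting involutions, $t$ and $t^{-1}$ determine the same element of $A_3$, so these Schreier generators are exactly the conjugates $\lambda_{kl}^{\gamma_1^{\varepsilon_1}\gamma_2^{\varepsilon_2}\gamma_3^{\varepsilon_3}}$ (formally $24$ symbols).

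Next I would apply the rewriting process $\tau$ to the defining relators of $TVP_3$, exactly as in the proof of Theorem~\ref{sTVP_n}. A relator lying in $rel(A_3)$ involves only the $\gamma_i$, so its rewrites are products of symbols $s_{\ast,\gamma_i}^{\pm1}=e$ and contribute nothing. Each of the three commutation relators $\gamma_k\lambda_{ij}\gamma_k^{-1}\lambda_{ij}^{-1}$ (with $\{i,j,k\}=\{1,2,3\}$) rewrites, for $t=e$, to $\lambda_{ij}^{\gamma_k}=\lambda_{ij}$ and, for general $t$, to $\lambda_{ij}^{t\gamma_k}=\lambda_{ij}^{t}$; these relations allow one to discard half of the conjugates found above and keep precisely the twelve generators $\lambda_{ij}^{(0)},\lambda_{ij}^{(i)},\lambda_{ij}^{(j)},\lambda_{ij}^{(ij)}$, $1\le i<j\le 3$, listed in the statement. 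For each of the six ``classical'' relators $r_\mu$ of $TVP_3$, tracking initial segments shows (just as in the proof of Theorem~\ref{sTVP_n}) that $\tau(t r_\mu t^{-1})$ is obtained from $r_\mu$ by conjugating every one of its $\lambda$-letters by $t$; reducing each resulting exponent by the commutation relations above always brings it among the twelve generators, and for $t=e$ one recovers precisely the six relations written in the statement. Letting $t$ run over $T=\{\gamma_1^{\varepsilon_1}\gamma_2^{\varepsilon_2}\gamma_3^{\varepsilon_3} : \varepsilon_i\in\{0,1\}\}$ then yields the announced family of relations, and by Theorem~2.9 of~\cite{MKS} the resulting presentation is complete.

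The only real work is bookkeeping: one must check that the $\tau$-rewrite of each conjugated classical relator produces no spurious generator, that the identifications coming from the commutation relators are compatible with conjugation by $T$ so that they indeed collapse the $24$ symbols to $12$, and that $rel(A_3)$ contributes only trivial relations. None of this is conceptually new relative to the proof of Theorem~\ref{sTVP_n} --- it is the same computation carried out over the $8$-element transversal $T$ in place of $\Lambda_n$ --- so I would spell out the generator computation in full and then simply record the six families of relations obtained by conjugating by $\gamma_1^{\varepsilon_1}\gamma_2^{\varepsilon_2}\gamma_3^{\varepsilon_3}$, leaving the routine verifications to the reader.
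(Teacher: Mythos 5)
Your proposal is correct and follows exactly the route the paper intends: the paper states this proposition with no written proof beyond the remark that it follows from the Reidemeister--Schreier method, and your argument --- taking the $8$-element Schreier transversal $\gamma_1^{\varepsilon_1}\gamma_2^{\varepsilon_2}\gamma_3^{\varepsilon_3}$ for $PL_3=\ker(\psi_P)$ in $TVP_3$, observing that the $\gamma$-generators contribute trivial Schreier generators, using the commutation relators $\gamma_k\lambda_{ij}=\lambda_{ij}\gamma_k$ to cut the $24$ conjugates down to the $12$ listed generators, and rewriting the six classical relators over all transversal elements --- is precisely that computation carried out over $T$ in place of $\Lambda_n$, mirroring the proof of Theorem~\ref{sTVP_n}. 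No gaps beyond routine bookkeeping.
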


The general case can be established using the Reidemeister-Schreier method 
\begin{theorem} \label{PL_n}
The group $PL_n$, $n \geq 2$ is generated by elements

$$\lambda_{ij}^{(0)} = \lambda_{ij}, ~\lambda_{ij}^{(i)} = \lambda_{ij}^{\gamma_i},~ \lambda_{ij}^{(j)} = \lambda_{ij}^{\gamma_j},~ \lambda_{ij}^{(ij)} = \lambda_{ij}^{\gamma_i \gamma_j}, ~~1\leq i<j \leq n,$$

and relations are defined as follows,

$$\lambda_{ij}^{(0)}\lambda_{kl}^{(0)} =\lambda^{(0)}_{kl}\lambda^{(0)}_{ij},~~ \{i,j\}\cap \{k,l\}=\phi$$
$$\lambda^{(i)}_{ij}\lambda^{(0)}_{kl} =\lambda^{(0)}_{kl}\lambda^{(i)}_{ij}, ~~\lambda^{(j)}_{ij}\lambda^{(0)}_{kl} =\lambda^{(0)}_{kl}\lambda^{(j)}_{ij},~~ \lambda^{(0)}_{ij}\lambda^{(k)}_{kl} =\lambda^{(k)}_{kl}\lambda^{(0)}_{ij}$$
$$\lambda^{(0)}_{ij}\lambda^{(l)}_{kl} =\lambda_{kl}^{(l)}\lambda^{(0)}_{ij}, ~~\lambda^{(ij)}_{ij}\lambda^{(0)}_{kl} =\lambda^{(0)}_{kl}\lambda_{ij}^{(ij)},~~ \lambda^{(0)}_{ij}\lambda^{(kl)}_{kl} =\lambda_{kl}^{(kl)}\lambda^{(0)}_{ij}$$
$$\lambda^{(i)}_{ij}\lambda^{(k)}_{kl} =\lambda^{(k)}_{kl}\lambda^{(i)}_{ij}, ~~\lambda^{(i)}_{ij}\lambda^{(l)}_{kl} =\lambda^{(l)}_{kl}\lambda^{(i)}_{ij},~~ \lambda^{(j)}_{ij}\lambda^{(k)}_{kl} =\lambda^{(k)}_{kl}\lambda^{(j)}_{ij}$$
$$\lambda^{(j)}_{ij}\lambda^{(l)}_{kl} =\lambda^{(l)}_{kl}\lambda^{(j)}_{ij}, ~~\lambda^{(ij)}_{ij}\lambda^{(k)}_{kl} =\lambda^{(k)}_{kl}\lambda^{(ij)}_{ij},~~ \lambda^{(ij)}_{ij}\lambda^{(l)}_{kl} =\lambda^{(l)}_{kl}\lambda^{(ij)}_{ij}$$
$$\lambda^{(i)}_{ij}\lambda^{(kl)}_{kl} =\lambda^{(kl)}_{kl}\lambda^{(i)}_{ij}, ~~\lambda^{(j)}_{ij}\lambda^{(kl)}_{kl} =\lambda^{(kl)}_{kl}\lambda^{(j)}_{ij},~~ \lambda^{(ij)}_{ij}\lambda^{(kl)}_{kl} =\lambda^{(kl)}_{kl}\lambda^{(ij)}_{ij}$$

$$\lambda_{ij}^{(0)} \lambda_{ik}^{(0)} \lambda_{jk}^{(0)} =\lambda_{jk}^{(0)} \lambda_{ik}^{(0)} \lambda_{ij}^{(0)} \text{, \hspace{0.3cm}}
\lambda_{ij}^{(i)} \lambda_{ik}^{(i)} \lambda_{jk}^{(0)} =\lambda_{jk} ^{(0)} 
\lambda_{ik} ^{(i)} \lambda_{ij}^{(i)} $$
$$\lambda_{ij}^{(j)} \lambda_{ik}^{(0)} \lambda_{jk}^{(j)} =\lambda_{jk}^{(j)} \lambda_{ik}^{(0)} \lambda_{ij}^{(j)} \text{, \hspace{0.3cm}}
\lambda_{ij}^{(0)} \lambda_{ik}^{(k)} \lambda_{jk}^{(k)} =\lambda_{jk} ^{(k)} \lambda_{ik} ^{(k)} \lambda_{ij}^{(0)} $$
$$\lambda_{ij}^{(ij)} \lambda_{ik}^{(i)} \lambda_{jk}^{(j)} =\lambda_{jk} ^{(j)} \lambda_{ik} ^{(i)} \lambda_{ij}^{(ij)} \text{, \hspace{0.3cm}}
\lambda_{ij}^{(j)} \lambda_{ik}^{(k)} \lambda_{jk}^{(jk)} =\lambda_{jk} ^{(jk)} \lambda_{ik} ^{(k)} \lambda_{ij}^{(j)} $$
$$\lambda_{ij}^{(i)} \lambda_{ik}^{(ik)} \lambda_{jk}^{(k)} =\lambda_{jk} ^{(k)} \lambda_{ik} ^{(ik)} \lambda_{ij}^{(i)} \text{, \hspace{0.3cm}}
\lambda_{ij}^{(ij)}\lambda_{ik}^{(ik)} \lambda_{jk}^{(jk)}=\lambda_{jk}^{(jk)} \lambda_{ik}^{(ik)}\lambda_{ij}^{(ij)}$$

$$\lambda_{ij}^{(ij)} \lambda_{jk}^{(0)} \lambda_{ik}^{(0)} =\lambda_{ik}^{(0)} \lambda_{jk}^{(0)} \lambda_{ij}^{(ij)}\text{, \hspace{0.3cm}}
\lambda_{ij}^{(j)} \lambda_{jk}^{(0)} \lambda_{ik}^{(i)} =\lambda_{ik}^{(i)} \lambda_{jk}^{(0)} \lambda_{ij}^{(j)}$$
$$ \lambda_{ij}^{(i)}  \lambda_{jk}^{(j)}\lambda_{ik}^{(0)} =\lambda_{ik}^{(0)} \lambda_{jk}^{(j)} \lambda_{ij}^{(i)}\text{, \hspace{0.3cm}}  
 \lambda_{ij}^{(0)}  \lambda_{jk}^{(j)}\lambda_{ik}^{(i)} =\lambda_{ik}^{(i)} \lambda_{jk}^{(j)} \lambda_{ij}^{(0)}$$
 $$\lambda_{ij}^{(ij)} \lambda_{jk}^{(k)} \lambda_{ik}^{(k)} =\lambda_{ik}^{(k)} \lambda_{jk}^{(k)} \lambda_{ij}^{(ij)}\text{, \hspace{0.3cm}}
\lambda_{ij}^{(i)} \lambda_{jk}^{(jk)} \lambda_{ik}^{(k)} =\lambda_{ik}^{(k)} \lambda_{jk}^{(jk)} \lambda_{ij}^{(i)}$$
$$ \lambda_{ij}^{(j)}  \lambda_{jk}^{(k)}\lambda_{ik}^{(ik)} =\lambda_{ik}^{(ik)} \lambda_{jk}^{(k)} \lambda_{ij}^{(j)}\text{, \hspace{0.3cm}}  
 \lambda_{ij}^{(0)}  \lambda_{jk}^{(jk)}\lambda_{ik}^{(ik)} =\lambda_{ik}^{(ik)} \lambda_{jk}^{(jk)} \lambda_{ij}^{(0)}$$
 
$$\lambda_{ik}^{(0)} \lambda_{ij}^{(0)} \lambda_{jk}^{(jk)}=\lambda_{jk}^{(jk)} \lambda_{ij}^{(0)} \lambda_{ik}^{(0)} \text{, \hspace{0.3cm}}
\lambda_{ik}^{(i)} \lambda_{ij}^{(i)} \lambda_{jk}^{(jk)}=\lambda_{jk}^{(jk)} \lambda_{ij}^{(i)} \lambda_{ik}^{(i)} $$
$$\lambda_{ik}^{(0)} \lambda_{ij}^{(j)} \lambda_{jk}^{(k)}=\lambda_{jk}^{(k)} \lambda_{ij}^{(j)} \lambda_{ik}^{(0)} \text{, \hspace{0.3cm}}
\lambda_{ik}^{(k)} \lambda_{ij}^{(0)} \lambda_{jk}^{(j)}=\lambda_{jk}^{(j)} \lambda_{ij}^{(0)} \lambda_{ik}^{(k)} $$
$$\lambda_{ik}^{(i)} \lambda_{ij}^{(ij)} \lambda_{jk}^{(k)}=\lambda_{jk}^{(k)} \lambda_{ij}^{(ij)} \lambda_{ik}^{(i)} \text{, \hspace{0.3cm}}
\lambda_{ik}^{(k)} \lambda_{ij}^{(j)} \lambda_{jk}^{(jk)}=\lambda_{jk}^{(jk)} \lambda_{ij}^{(j)} \lambda_{ik}^{(j)} $$
$$\lambda_{ik}^{(ik)} \lambda_{ij}^{(i)}\lambda_{jk}^{(j)} = \lambda_{jk}^{(j)}  \lambda_{ij}^{(i)} \lambda_{ik}^{(ik)}\text{, \hspace{0.3cm}}
\lambda_{ik}^{(ik)} \lambda_{ij}^{(ij)}\lambda_{jk}^{(0)} = \lambda_{jk}^{(0)}  \lambda_{ij}^{(ij)} \lambda_{ik}^{(ik)}$$
\end{theorem}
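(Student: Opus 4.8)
The proof is a Reidemeister--Schreier computation applied to the presentation of $TVP_n$ obtained in Corollary~\ref{1sTVP_n}, carrying the argument sketched in the Proposition above to an arbitrary $n$. The plan is as follows.

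First, I would fix the Schreier transversal of $PL_n=\ker(\psi_P)$ in $TVP_n$. Since $\psi_P$ induces an isomorphism $TVP_n/PL_n\cong A_n\cong\mathbb{Z}_2^n$ (Corollary~\ref{isom}), the $2^n$ elements $\gamma_S:=\prod_{i\in S}\gamma_i$, product taken in increasing order of the indices in $S\subseteq\{1,\dots,n\}$, form a complete set of coset representatives, and the set $\{\gamma_S\}$ is prefix-closed in the generators $\gamma_1,\dots,\gamma_n$, hence Schreier. By the Reidemeister--Schreier theorem, $PL_n$ is generated by the elements $s_{\gamma_S,x}=\gamma_S\,x\,\overline{\gamma_S x}^{-1}$, where $x$ ranges over $\{\lambda_{ij}\mid 1\le i<j\le n\}\cup\{\gamma_1,\dots,\gamma_n\}$. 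One has $s_{\gamma_S,\gamma_j}=e$ because $\gamma_S\gamma_j=\gamma_{S\triangle\{j\}}=\overline{\gamma_S\gamma_j}$ already in $TVP_n$, and $s_{\gamma_S,\lambda_{ij}}=\gamma_S\lambda_{ij}\gamma_S^{-1}=\lambda_{ij}^{\gamma_S}$ because $\psi_P(\lambda_{ij})=e$ forces $\overline{\gamma_S\lambda_{ij}}=\gamma_S$. Rewriting the conjugates $\gamma_S r\gamma_S^{-1}$ of the relators in~(\ref{1g3}) shows that the only relations these impose among the $s_{\gamma_S,\lambda_{ij}}$ are $\lambda_{ij}^{\gamma_S}=\lambda_{ij}^{\gamma_{S\cap\{i,j\}}}$; after Tietze-eliminating the redundancies, exactly the four generators $\lambda_{ij}^{(0)},\lambda_{ij}^{(i)},\lambda_{ij}^{(j)},\lambda_{ij}^{(ij)}$ per pair $i<j$ survive, which is the generating set in the statement.

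Next, I would compute the defining relations by applying the rewriting map $\tau$ to $\gamma_S\,r\,\gamma_S^{-1}$ for every defining relator $r$ of Corollary~\ref{1sTVP_n} and every $S\subseteq\{1,\dots,n\}$. The relators~(\ref{1g1}) and~(\ref{1g2}) rewrite to the empty word (every letter is a $\gamma_k$, contributing $s_{\cdot,\gamma_k}=e$), and the relators~(\ref{1g3}) were used up in the previous step; neither produces new relations. A relator among~(\ref{1comm-clas}) is supported on a $4$-element index set, and a relator among~(\ref{classical1})--(\ref{classical6}) on a $3$-element set; since conjugation by $\gamma_S$ moves only the letters $\gamma_m$ with $m$ in that set, fixing each $\lambda_{ab}$ up to the substitution $\lambda_{ab}\mapsto\lambda_{ab}^{(S\cap\{a,b\})}$, the word $\tau(\gamma_S r\gamma_S^{-1})$ depends only on the intersection of $S$ with the index set of $r$ and is obtained from $r$ by decorating each $\lambda_{ab}$ with that superscript. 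Running $S$ over the $16$ relevant values for~(\ref{1comm-clas}) yields exactly the sixteen commutation relations in the statement. For the triangle part, one checks directly that conjugation by $\gamma_i\gamma_j\gamma_k$ carries~(\ref{classical1}) to~(\ref{classical2}),~(\ref{classical3}) to~(\ref{classical4}), and~(\ref{classical5}) to~(\ref{classical6}) (because $\lambda_{ab}^{\gamma_i\gamma_j\gamma_k}=\lambda_{ab}^{\gamma_a\gamma_b}=\lambda_{ab}^{(ab)}$ whenever $\{a,b\}\subset\{i,j,k\}$), so the six relations of Corollary~\ref{1sTVP_n} fall into three $\gamma$-conjugacy orbits of eight; running $S$ over the eight relevant values within each orbit produces the three blocks of eight triangle relations displayed. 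Since every relator of $TVP_n$ is supported on at most four indices and conjugation by $\gamma_S$ creates no new ones, this exhausts the relations.

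The only genuine difficulty is organizational: one must check that the raw Reidemeister--Schreier output --- on the order of $2^n$ times the number of relators of $TVP_n$ --- collapses exactly onto the finite, index-local list claimed, with nothing missing and no spurious relation surviving (in particular, that the $\gamma$-relators die, that~(\ref{1g3}) yields only the fourfold identification of each $\lambda_{ij}^{\gamma_S}$, and that the $6\cdot 2^3$ rewritten triangle relators reduce to precisely the three displayed orbits). As each such relation is supported on at most four indices, this verification is contained in the cases $n=3$ (the Proposition above) and $n=4$, and the general case is identical.
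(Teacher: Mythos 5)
Your proposal is correct and follows exactly the route the paper indicates: the paper offers no written proof of Theorem~\ref{PL_n} beyond the remark that the general case ``can be established using the Reidemeister--Schreier method,'' and your computation --- the Schreier transversal $\{\gamma_S\}$ of $PL_n$ in $TVP_n$, the collapse of the generators $\lambda_{ij}^{\gamma_S}$ to the four per pair via relation~(\ref{1g3}), and the rewriting of the remaining relators of Corollary~\ref{1sTVP_n} by conjugating over subsets of their index supports --- is precisely that computation carried out. Your orbit count (sixteen commutation relations, and three $\gamma$-conjugacy orbits of eight triangle relations) matches the list in the statement.
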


\begin{theorem} The map 
$$
\lambda^{(0)}_{ij} \to \lambda_{ij},~~\lambda^{(j)}_{ij} \to e,~~\lambda^{(i)}_{ij} \to e,~~ \lambda^{(ij)}_{ij} \to \lambda_{ji},~~~1 \leq i < j \leq n,
$$
defines an endomorphism of $PL_n$ onto $VP_n$. Consequently, it follows $VP_n \leq PL_n$.
\end{theorem}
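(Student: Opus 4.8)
The plan is to treat this as a comparison of presentations: use the presentation of $PL_n$ from Theorem~\ref{PL_n} and the presentation of $VP_n$ by \eqref{rel}--\eqref{relation}, check that the displayed assignment on generators respects every defining relation of $PL_n$, and then produce a one-sided inverse. First I would verify that the rule $\lambda^{(0)}_{ij}\mapsto\lambda_{ij}$, $\lambda^{(i)}_{ij}\mapsto e$, $\lambda^{(j)}_{ij}\mapsto e$, $\lambda^{(ij)}_{ij}\mapsto\lambda_{ji}$ (for $1\le i<j\le n$) extends to a homomorphism $\psi\colon PL_n\to VP_n$. Running through the relations of Theorem~\ref{PL_n}: in each commutation relation the two factors carry disjoint index pairs, so it is sent either to a tautology --- as soon as one of the two factors is a half-decorated generator $\lambda^{(i)}_{ij}$ or $\lambda^{(j)}_{ij}$, hence goes to $e$ --- or, when both factors are of type $\lambda^{(0)}$ or $\lambda^{(ij)}$, to an instance of \eqref{rel}. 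Each ``triangular'' relation has the shape $XYZ=ZYX$ with $X,Y,Z$ attached to the three pairs of a triple $\{i,j,k\}$; if at least two of $X,Y,Z$ are half-decorated the relation becomes a tautology, so the only relations that genuinely need to be inspected are, for each triple $i<j<k$, the six in which all three factors are of type $\lambda^{(0)}$ or $\lambda^{(ij)}$ --- namely the first and the last relation in each of the three blocks displayed in Theorem~\ref{PL_n}. A short computation shows that $\psi$ sends these six, bijectively, onto the six instances of \eqref{relation} obtained by letting each of $i,j,k$ in turn serve as the repeated first index and allowing both orders of the remaining two indices. Hence $\psi$ is well defined, and it is onto because the images $\lambda_{ij}$ (from $\lambda^{(0)}_{ij}$) and $\lambda_{ji}$ (from $\lambda^{(ij)}_{ij}$) exhaust the generators of $VP_n$.

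For the ``consequently'' part I would exhibit a section. Define $\iota\colon VP_n\to PL_n$ by $\lambda_{ij}\mapsto\lambda^{(0)}_{ij}$ and $\lambda_{ji}\mapsto\lambda^{(ij)}_{ij}$ for $1\le i<j\le n$. Showing that $\iota$ is a homomorphism amounts to checking that the images under $\iota$ of the relations \eqref{rel} and \eqref{relation} hold in $PL_n$; but by the case analysis above these images are exactly the commutation relations of Theorem~\ref{PL_n} (in their several ordering variants) together with the six ``$\lambda^{(0)}/\lambda^{(ij)}$'' triangular relations, so no new verification is required. Since $\psi\circ\iota$ fixes every generator of $VP_n$ it is the identity, hence $\iota$ is injective and $VP_n\cong\iota(VP_n)\le PL_n$. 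Here $\iota(VP_n)=\langle\lambda^{(0)}_{ij},\lambda^{(ij)}_{ij}\mid 1\le i<j\le n\rangle$, which inside $TVP_n$ is the subgroup $\langle\lambda_{ij}\mid i\ne j\rangle$ (recall $\lambda^{(0)}_{ij}=\lambda_{ij}$ and, by \eqref{g4}, $\lambda^{(ij)}_{ij}=\lambda_{ij}^{\gamma_i\gamma_j}=\lambda_{ji}$), and $\iota\circ\psi$ is the idempotent endomorphism of $PL_n$ with image $\cong VP_n$ referred to in the statement.

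The one genuinely laborious step is the bookkeeping inside the first paragraph: matching, triple by triple, the surviving triangular relations of $PL_n$ with the $3!$ instances of \eqref{relation}, and passing through the ordering variants of each commutation relation. This is entirely routine once organized by the decoration type of the factors, and it uses nothing beyond the two presentations --- the only ``mechanism'' is that $\psi$ kills the half-decorated generators, so any relation containing two or more of them collapses automatically.
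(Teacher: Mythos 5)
Your proposal is correct and follows essentially the same route as the paper: the paper's proof likewise verifies that the map respects the presentation of $PL_n$ by observing that every relation with a half-decorated factor collapses and that the surviving images are exactly the six commutation and six triangular relations of $VP_n$ per index set. Your second paragraph (the explicit section $\iota$ with $\psi\circ\iota=\mathrm{id}$) is a welcome addition, since the paper jumps from ``all image relations hold in $VP_n$'' directly to ``hence $VP_n\le PL_n$'' and leaves that retraction argument implicit.
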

\begin{proof}
If we observe, the defined endomorphism's image contains the following non-trivial relations:
$$\lambda_{ij}\lambda_{kl} =\lambda_{kl}\lambda_{ij},~~\lambda_{ji}\lambda_{kl} =\lambda_{kl}\lambda_{ji}$$
$$\lambda_{ij}\lambda_{lk} =\lambda_{lk}\lambda_{ij},~~\lambda_{ji}\lambda_{lk} =\lambda_{lk}\lambda_{ji}$$
$$\lambda_{ij}\lambda_{ik} \lambda_{jk} =\lambda_{jk}\lambda_{ik} \lambda_{ij} \text{, \hspace{0.3cm}}
\lambda_{ji}\lambda_{ki} \lambda_{kj}=\lambda_{kj} \lambda_{ki}\lambda_{ji},$$
$$\lambda_{ji} \lambda_{jk} \lambda_{ik}=\lambda_{ik} \lambda_{jk} \lambda_{ji}\text{, \hspace{0.3cm}}
\lambda_{ij}  \lambda_{kj}\lambda_{ki} =\lambda_{ki} \lambda_{kj}\lambda_{ij}$$
$$\lambda_{ik} \lambda_{ij} \lambda_{kj}=\lambda_{kj}\lambda_{ij}\lambda_{ik}\text{, \hspace{0.3cm}}
\lambda_{ki}\lambda_{ji}\lambda_{jk} = \lambda_{jk} \lambda_{ji}\lambda_{ki}$$
All these relations satisfied in $VP_n$.
Hence $VP_n \leq PL_n$.
\end{proof}
As corollary we get

\begin{corollary}
The virtual pure braid group $VP_n$ is a subgroup of $TVP_n$.
\end{corollary}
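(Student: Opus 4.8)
The plan is simply to concatenate two inclusions that are already in hand. On one side, the decomposition $TVP_n = PL_n \rtimes A_n$ established above realizes $PL_n = \ker(\psi_P)$ as a (normal) subgroup of $TVP_n$. On the other side, the preceding theorem produces the retraction $PL_n \to VP_n$ and records the consequence $VP_n \le PL_n$. Composing the two, $VP_n \le PL_n \le TVP_n$, which is exactly the assertion of the corollary; once the ingredients are in place the proof is a single line.

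The only point that deserves a sentence of care is why the surjection $PL_n \to VP_n$ of the previous theorem genuinely exhibits $VP_n$ as a subgroup and not merely as a quotient. For this one passes to the subgroup $W \le PL_n$ generated by $\{\lambda^{(0)}_{ij},\ \lambda^{(ij)}_{ij} \ :\ 1 \le i < j \le n\}$. The map of the theorem sends these generators to $\lambda_{ij}$ and $\lambda_{ji}$ respectively, so its restriction to $W$ is already onto $VP_n$; moreover, the relations of Theorem \ref{PL_n} that involve only $\lambda^{(0)}_{ij}$ and $\lambda^{(ij)}_{ij}$ correspond, under this identification, precisely to the defining relations \eqref{rel}--\eqref{relation} of $VP_n$. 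Hence $W$ admits no relations beyond those of $VP_n$ yet maps onto $VP_n$, so $W \cong VP_n$, which embeds $VP_n$ into $PL_n$ and therefore into $TVP_n$.

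There is essentially no obstacle: all the substance lies in the earlier structural results (the semidirect decomposition $TVP_n = PL_n \rtimes A_n$ and the presentation of $PL_n$), and the corollary is a formal consequence of them. The mildest subtlety is bookkeeping: to read "$PL_n \le TVP_n$" as an honest internal inclusion of subgroups coming from the internal semidirect product, and to note that the map of the previous theorem is a retraction — the identity on $W$ — so that $VP_n$ reappears inside $PL_n$ rather than only as its image.
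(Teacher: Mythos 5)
Your proposal is correct and follows essentially the same route as the paper: the corollary is obtained by combining the inclusion $PL_n \le TVP_n$ from the semidirect decomposition $TVP_n = PL_n \rtimes A_n$ with the conclusion $VP_n \le PL_n$ of the preceding theorem, whose map $\lambda^{(0)}_{ij}\mapsto\lambda_{ij}$, $\lambda^{(ij)}_{ij}\mapsto\lambda_{ji}$, $\lambda^{(i)}_{ij},\lambda^{(j)}_{ij}\mapsto e$ is exactly the retraction you describe. Your extra paragraph (restricting to the subgroup $W$ and using that the retraction is the identity there, so that the surjection $VP_n\to W$ is split injective) just makes explicit the step the paper leaves implicit; note only that the phrase ``$W$ admits no relations beyond those of $VP_n$'' is not justified by inspecting the presentation alone --- it is the retraction, as you say at the end, that actually delivers the injectivity.
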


\subsection{Presentation of  $TVH_n$}\label{tvhn}

Consider the following elements of  $TVB_n$:
$$x_{i,i+1}=\sigma_i,\text{   } x_{i+1,i}=\rho_i\sigma_i\rho_i, \text{   }  i=1,\ldots, n-1,$$
$$x_{i,j}=\rho_{j-1}\rho_{j-2}\cdots\rho_{i+1}\sigma_{i,i+1}\rho_{i+1}\cdots\rho_{j-2}\rho_{j-1},$$ 
$$ x_{j,i}=\rho_{j-1}\rho_{j-2}\cdots\rho_{i+1}\rho_i\sigma_i\rho_i\rho_{i+1}\cdots\rho_{j-2}\rho_{j-1},\text{   }  1 \leq i \leq j-1 \leq n-1.$$
\begin{theorem}\label{main2}
    The group $TVH_n$ admits a presentation with the generators \\ $x_{kl}$, $1 \leq k\neq l \leq n$, and $\gamma_j$, $1\leq j\leq n$. The defining relations are as follows:
     \begin{align}
x_{ij}x_{kl} &=x_{kl}x_{ij}\label{comm-clas2}\\
x_{ik}x_{kj}x_{ik} &=x_{kj}x_{ik}x_{kj}\label{2classical2}\\
\gamma_i^2 &=1\label{g5}\\
\gamma_i\gamma_j &=\gamma_j\gamma_i\label{g6}\\
x_{ij}\gamma_k & =\gamma_kx_{ij}\label{g7}\\
x_{ij} &=\gamma_i\gamma_jx_{ji}\gamma_j\gamma_i\label{g8},
\end{align}
where distinct letters stand for distinct indices.
\end{theorem}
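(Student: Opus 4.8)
The plan is to reuse, almost verbatim, the Reidemeister--Schreier argument behind Theorem~\ref{sTVP_n}, with the epimorphism $\varphi_P$ replaced by $\varphi_H$. The only essential difference is that $\varphi_H$ sends every $\sigma_i$ to $e$ rather than to $\rho_i$, which in fact only simplifies the treatment of the $\sigma$-generators. I would use the same Schreier transversal $\Lambda_n=\{\prod_{k=2}^{n}m_{k,j_k}\mid 1\le j_k\le k\}$ (with $m_{kl}=\rho_{k-1}\cdots\rho_l$) as a set of coset representatives of $TVH_n$ in $TVB_n$; this is legitimate because the subgroup $\langle\rho_1,\dots,\rho_{n-1}\rangle\le TVB_n$ is carried isomorphically onto $S_n$ by $\varphi_H$, exactly as in the virtual case treated in \cite{B,BB}.

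First I would write down the Schreier generators $s_{\lambda,a}=\lambda a\,(\overline{\lambda a})^{-1}$ for $\lambda\in\Lambda_n$ and $a$ running over $\{\rho_i,\sigma_i,\gamma_j\}$. As in \cite{B,BB}, every $s_{\lambda,\rho_i}$ equals $e$; since $\varphi_H(\sigma_i)=e$ one gets $s_{\lambda,\sigma_i}=\lambda\sigma_i\lambda^{-1}$, and with $s_{e,\sigma_i}=\sigma_i=x_{i,i+1}$ Lemma~\ref{form1} identifies each $s_{\lambda,\sigma_i}$ with one of the $x_{kl}$, every $x_{kl}$ occurring; and $s_{\lambda,\gamma_j}=\lambda\gamma_j\lambda^{-1}=\gamma_m$ for a suitable $m$ by Lemma~\ref{gamma}. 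Hence $TVH_n$ is generated by the $x_{kl}$, $1\le k\ne l\le n$, together with $\gamma_1,\dots,\gamma_n$.

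Next I would push each defining relation $r_\mu$ of $TVB_n$ through the rewriting map $\tau$ (defined exactly as in the proof of Theorem~\ref{sTVP_n}) and conjugate by all $\lambda\in\Lambda_n$, via $r_{\mu,\lambda}=\lambda\,\tau(r_\mu)\,\lambda^{-1}$ (Theorem~2.9 of~\cite{MKS}). The relations of $TVB_n$ already present in $VB_n$ rewrite to the defining relations of $VH_n$, namely (\ref{comm-clas2}) and (\ref{2classical2}); this is the computation of \cite{R} (see also \cite{BB}). Of the four new families of relations: $\gamma_i^2=e$ rewrites to $\gamma_i^2$, giving (\ref{g5}); $\gamma_i\gamma_j=\gamma_j\gamma_i$ gives (\ref{g6}); the relation $\gamma_j\sigma_i=\sigma_i\gamma_j$ with $j\ne i,i+1$, after conjugation by all $\lambda$ and an appeal to Lemmas~\ref{form1} and~\ref{gamma}, becomes $x_{kl}\gamma_m=\gamma_m x_{kl}$ for all $m\notin\{k,l\}$, that is (\ref{g7}); and the twist relation (\ref{rel-twist-III}) rewrites by the same chain of reductions as the relation $r_4$ in the proof of Theorem~\ref{sTVP_n}, now with $s_{\rho_i,\sigma_i}=\rho_i\sigma_i\rho_i=x_{i+1,i}$ and $s_{e,\sigma_i}=x_{i,i+1}$, to $x_{i,i+1}=\gamma_i\gamma_{i+1}x_{i+1,i}\gamma_{i+1}\gamma_i$, whose conjugates under $\Lambda_n$ yield all of (\ref{g8}). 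Finally, the two remaining mixed relations $\rho_i\gamma_i=\gamma_{i+1}\rho_i$ and $\gamma_j\rho_i=\rho_i\gamma_j$ ($j\ne i,i+1$) rewrite to $\gamma_{i+1}^2$ and to a consequence of (\ref{g6}), respectively, hence contribute nothing new.

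The only step that genuinely needs care is the completeness check: one must verify that conjugating the handful of ``base'' instances by every Schreier representative produces exactly the relations listed --- under the convention that distinct letters denote distinct indices --- and that no further relation slips in. This is bookkeeping with the Schreier representatives of words containing $\gamma$'s and $\sigma$'s, entirely parallel to the $TVP_n$ computation, so I expect it to be routine, if somewhat lengthy, rather than a real obstacle. Granting it, Theorem~2.9 of~\cite{MKS} gives that (\ref{comm-clas2})--(\ref{g8}) is a complete set of defining relations for $TVH_n$, which is the assertion of the theorem.
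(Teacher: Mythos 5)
Your proposal is correct and follows exactly the route the paper intends: the paper's own proof of this theorem is literally ``similar to the proof of Theorem~\ref{sTVP_n},'' i.e.\ the same Reidemeister--Schreier computation with $\varphi_H$ in place of $\varphi_P$, which is what you carry out. Your handling of the one genuine difference --- that $\varphi_H(\sigma_i)=e$ forces $\overline{\lambda\sigma_i}=\lambda$, so $s_{\lambda,\sigma_i}=\lambda\sigma_i\lambda^{-1}$ and the twist relation rewrites to $x_{i,i+1}=\gamma_i\gamma_{i+1}x_{i+1,i}\gamma_{i+1}\gamma_i$ --- is accurate, and in fact supplies details the paper omits.
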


\begin{proof}
It is similar to the proof of Theorem~\ref{sTVP_n}.
\end{proof}

We can establish an alternative representation of the group $TVH_n$ by applying the relations (\ref{g8}) as outlined in Theorem \ref{main2}. 
\begin{corollary}\label{1TVH_n}
    The group $TVH_n$ admits a presentation with the generators \\ $x_{kl},~1 \leq k< l \leq n$, and $\gamma_j$, $1\leq j\leq n$. The defining relations are as follows:
     \begin{align}
x_{ij} \, x_{kl} &=x_{kl} \, x_{ij},\label{2comm-clas}\\
x_{ik} \, x_{kj} \, x_{ik} &=x_{kj}\, x_{ik} \, x_{kj},\label{1classical}\\
x_{ij} \, x_{kj}^{(kj)} \, x_{ij} &=x_{kj}^{(kj)} \, x_{ij} \, x_{kj}^{(kj)},\label{2classical}\\
x_{ij}^{(ij)} \, x_{ik} \, x_{ij}^{(ij)} &=x_{ik} \, x_{ij}^{(ij)} \, x_{ik},\label{3classical}\\
x_{ik}^{(ik)} \, x_{kj}^{(kj)} \, x_{ik}^{(ik)} &=x_{kj}^{(kj)} \, x_{ik}^{(ik)} \, x_{kj}^{(kj)},\label{4classical}\\
\gamma_i^2 &=1,\label{2g1}\\
\gamma_i \, \gamma_j &=\gamma_j \, \gamma_i,\label{2g2}\\
x_{ij} \, \gamma_k & =\gamma_k \, \lambda_{ij}\label{2g3},
\end{align}
where we denoted $x^{(ij)}_{ij}=x^{\gamma_i\gamma_j}$ and, as usual, distinct letters stand for distinct indices.
\end{corollary}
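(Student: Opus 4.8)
The plan is to derive this presentation from the one supplied by Theorem~\ref{main2} by means of a Tietze transformation that eliminates the generators $x_{ji}$ with $i<j$, in direct analogy with the passage from Theorem~\ref{sTVP_n} to Corollary~\ref{1sTVP_n}. The starting observation is that, since $\gamma_i^2=\gamma_j^2=1$ and $\gamma_i\gamma_j=\gamma_j\gamma_i$ by (\ref{g5})--(\ref{g6}), the element $\gamma_i\gamma_j$ is an involution with $(\gamma_i\gamma_j)^{-1}=\gamma_i\gamma_j$; hence relation (\ref{g8}) can be rewritten as $x_{ji}=\gamma_i\gamma_j\,x_{ij}\,\gamma_j\gamma_i=x_{ij}^{\gamma_i\gamma_j}$ for each pair $i<j$, while the ``mirror'' instance of (\ref{g8}), obtained by exchanging $i$ and $j$, becomes a formal consequence of this one and the $\gamma$-relations. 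Using these $\binom{n}{2}$ equalities I would delete each generator $x_{ji}$ with $i<j$ from the generating set, substitute $x_{ji}\mapsto x_{ij}^{\gamma_i\gamma_j}$ (which is precisely the shorthand $x^{(ij)}_{ij}$) into every remaining relator, and drop (\ref{g8}) from the relation list. The surviving generating set is then $\{x_{kl}\mid 1\le k<l\le n\}\cup\{\gamma_j\mid 1\le j\le n\}$, as claimed.

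The next step is to track what each family of relators of Theorem~\ref{main2} becomes under this substitution. The pure $\gamma$-relations (\ref{g5})--(\ref{g6}) are untouched and give (\ref{2g1})--(\ref{2g2}). Among the mixed relations $x_{ij}\gamma_k=\gamma_k x_{ij}$, $k\neq i,j$, of (\ref{g7}), the instances with $i<j$ are retained verbatim and give (\ref{2g3}); an instance with $i>j$ turns into a relation between $\gamma_k$ and a $\gamma$-conjugate of $x_{ji}$ (with $j<i$), and is redundant, because $\gamma_k$ already commutes with $\gamma_i$, $\gamma_j$ (as $k\neq i,j$) and with $x_{ji}$ (by the retained instance of (\ref{2g3})), hence with the conjugate. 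In the same way the ``disjoint'' commutations (\ref{comm-clas2}), $x_{ij}x_{kl}=x_{kl}x_{ij}$ with $i,j,k,l$ distinct, collapse onto the single family (\ref{2comm-clas}) in which both index pairs are ordered: an instance in which one or both pairs is reversed is a conjugate, by a word in the $\gamma$'s attached to one of the pairs, of an instance with that pair ordered, and each of those $\gamma$'s commutes with the generator carrying the other pair by (\ref{2g3}); hence such an instance follows from (\ref{2comm-clas}), (\ref{2g2}) and (\ref{2g3}).

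The essential point is the braid family (\ref{2classical2}), $x_{ik}x_{kj}x_{ik}=x_{kj}x_{ik}x_{kj}$ for distinct $i,j,k$. Here the two generators involved share the index $k$, and after the substitution each of the two index pairs $\{i,k\}$ and $\{k,j\}$ is either left intact (if ordered) or replaced by the corresponding $x^{(\cdot)}$ (if reversed). The four cases ordered/ordered, ordered/reversed, reversed/ordered, reversed/reversed produce exactly the four relation types (\ref{1classical}), (\ref{2classical}), (\ref{3classical}), (\ref{4classical}); running over the six orderings of a fixed triple of values one checks that the six instances distribute as $1+2+2+1$ among these forms, the two non-shared indices playing the roles of the two free letters, so that nothing is lost and nothing spurious is introduced. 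Conversely, each relation in the list (\ref{2comm-clas})--(\ref{2g3}) is an immediate consequence of the relations of Theorem~\ref{main2} after the same substitution, so Tietze's theorem yields that the list is a complete set of defining relations for $TVH_n$. I expect the only genuine work to be this last piece of bookkeeping in the braid family --- verifying that each of the six instances per index triple matches one of the four listed forms and confirming the stated redundancies among the commutation relations --- which is routine and entirely parallel to the reduction already used for $TVP_n$.
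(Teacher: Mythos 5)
Your proposal is correct and follows exactly the route the paper intends: the paper offers no written proof of this corollary beyond the remark that it follows from Theorem~\ref{main2} ``by applying the relations (\ref{g8})'', i.e.\ the Tietze elimination of the generators $x_{ji}$, $i<j$, via $x_{ji}=x_{ij}^{\gamma_i\gamma_j}$, which is precisely what you carry out. Your bookkeeping for the braid family (the $1+2+2+1$ distribution of the six instances per index triple over the four relation types) checks out, and your redundancy arguments for the reversed commutation instances are sound.
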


It is easy to see that the map 
$$
\psi_H \colon TVH_n \to A_n = \langle \gamma_1, \gamma_2, \ldots, \gamma_n \rangle,
$$
which is defined on the generators $x_{kl} \mapsto e,~1 \leq k\neq l \leq n$, $\gamma_j \mapsto \gamma_j $, $1\leq j\leq n$, can be extended to an endomorphism of $TVH_n$.

Similar to the methodology used previously for $\ker(\psi_P)$ it is ease to prove

\begin{theorem} \label{HL_n}
The group  $HL_n = \ker(\psi_H)$ is generated by elements
$$
x_{ij}^{(0)} = x_{ij}, ~x_{ij}^{(i)} = x_{ij}^{\gamma_i},~ x_{ij}^{(j)} = x_{ij}^{\gamma_j},~ x_{ij}^{(ij)} = x_{ij}^{\gamma_i \gamma_j}, ~~1\leq i<j \leq n,
$$
and relations are generated by conjugating the relations (\ref{2comm-clas}) to (\ref{4classical}) with elements from $A_n$.
\end{theorem}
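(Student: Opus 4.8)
The plan is to carry out the Reidemeister--Schreier procedure for $HL_n=\ker(\psi_H)$ inside $TVH_n$, starting from the presentation of $TVH_n$ given in Corollary~\ref{1TVH_n}; this is the exact analogue of the computation that yields Theorem~\ref{PL_n} for $PL_n=\ker(\psi_P)$, which in turn mimics Theorem~\ref{sTVP_n}. Since $\psi_H$ restricts to the identity on $A_n$ and $A_n\cong\mathbb{Z}_2^n$ by Corollary~\ref{isom}, the quotient $TVH_n/HL_n\cong A_n$ has order $2^n$, and
$$
T=\left\{\gamma_1^{\varepsilon_1}\gamma_2^{\varepsilon_2}\cdots\gamma_n^{\varepsilon_n}~|~\varepsilon_1,\ldots,\varepsilon_n\in\{0,1\}\right\}
$$
is a Schreier set of coset representatives for $HL_n$ in $TVH_n$: it is closed under taking initial segments (since each element is a reduced word $\gamma_{i_1}\cdots\gamma_{i_r}$ with $i_1<\cdots<i_r$), and $\psi_H$ maps it bijectively onto $A_n$, so its elements represent the $2^n$ distinct cosets. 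Each element of $T$ is an involution, so for $t\in T$ we may write the conjugation $x\mapsto x^t=t^{-1}xt$ also as $x\mapsto txt^{-1}$.

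First I would compute the Schreier generators $s_{t,g}=tg\cdot\overline{tg}^{\,-1}$, with $t$ ranging over $T$ and $g$ over the generators $x_{kl}$ $(1\le k<l\le n)$ and $\gamma_j$ $(1\le j\le n)$ of the presentation in Corollary~\ref{1TVH_n}. For $g=\gamma_j$: using the relations $\gamma_i^2=1$ and $\gamma_i\gamma_j=\gamma_j\gamma_i$ one moves $\gamma_j$ into position and cancels a square if needed, getting $t\gamma_j=\overline{t\gamma_j}$ in $TVH_n$, hence $s_{t,\gamma_j}=e$. For $g=x_{kl}$: since $\psi_H(x_{kl})=e$ we have $\overline{tx_{kl}}=t$, so $s_{t,x_{kl}}=tx_{kl}t^{-1}=x_{kl}^{\,t}$; by relation (\ref{g7}) each $\gamma_m$ with $m\neq k,l$ commutes with $x_{kl}$, so this reduces to $x_{kl}^{\gamma_k^{\varepsilon_k}\gamma_l^{\varepsilon_l}}$, i.e.\ to one of $x_{kl}^{(0)},x_{kl}^{(k)},x_{kl}^{(l)},x_{kl}^{(kl)}$. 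This is exactly the claimed generating set.

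For the defining relations I would apply the Reidemeister rewriting process $\tau$ to $t\,r_\mu\,t^{-1}$ for each $t\in T$ and each defining relator $r_\mu$ of $TVH_n$ from Corollary~\ref{1TVH_n}. The relators $\gamma_i^2$ and $[\gamma_i,\gamma_j]$ rewrite to the empty word, because every $s_{\cdot,\gamma_m}$ is trivial. The relator $x_{ij}\gamma_k x_{ij}^{-1}\gamma_k^{-1}$ with $k\neq i,j$ rewrites to a relation of the form $x_{ij}^{\,t}=x_{ij}^{\,t'}$, where $t'$ differs from $t$ only in the $k$-th exponent; since conjugation by $\gamma_k$ does not change which of the four generators $x_{ij}^{(\ast)}$ one lands in, both sides are literally the same generator and the relation is trivial. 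Finally, each relator among (\ref{2comm-clas})--(\ref{4classical}) is a word in the $x$'s alone, so $\tau(t\,r_\mu\,t^{-1})$ merely replaces every generator occurring in it by its $t$-conjugate; that is, it is the relator conjugated by $t\in A_n$, with the superscripts $(\ast)$ shifted accordingly (one uses $\gamma_m^2=e$ to keep exponents in $\{0,1\}$, e.g.\ $(x_{kj}^{(kj)})^{\gamma_k}=x_{kj}^{(j)}$). Running over all $t\in T$ produces precisely the relations described in the statement.

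The only genuine work, as in Theorem~\ref{PL_n}, is the bookkeeping: one must check that the $\gamma$-relators and the mixed relators of $TVH_n$ contribute nothing beyond trivial relations, and that conjugating (\ref{2comm-clas})--(\ref{4classical}) by the whole group $A_n$ gives a relation set that is already closed under the action, so that no hidden relation is lost. This closure holds because on the four superscripts attached to a fixed pair $(i,j)$ the generator $\gamma_i$ acts by the transpositions $(0)\leftrightarrow(i)$, $(j)\leftrightarrow(ij)$, the generator $\gamma_j$ acts by $(0)\leftrightarrow(j)$, $(i)\leftrightarrow(ij)$, and every other $\gamma_m$ acts trivially. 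No new idea beyond the $PL_n$ computation is required, which justifies the short assertion that the argument is similar to the one used for $\ker(\psi_P)$.
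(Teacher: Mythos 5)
Your proposal is correct and follows exactly the route the paper intends: the paper merely asserts that the result is obtained ``similar to the methodology used previously for $\ker(\psi_P)$'', i.e.\ by the Reidemeister--Schreier method with the transversal $\{\gamma_1^{\varepsilon_1}\cdots\gamma_n^{\varepsilon_n}\}$, and your write-up supplies precisely those details (triviality of the $s_{t,\gamma_j}$, identification of $s_{t,x_{kl}}$ with the four conjugates $x_{kl}^{(0)},x_{kl}^{(k)},x_{kl}^{(l)},x_{kl}^{(kl)}$, and the rewriting of the relators). The bookkeeping you describe, including the observation that the mixed relators rewrite trivially and that conjugation by $A_n$ permutes the superscripts, matches the computation behind Theorem~\ref{PL_n}.
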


As corollary we get

\begin{corollary} 
 $TVH_n = HL_n \rtimes A_n$.
\end{corollary}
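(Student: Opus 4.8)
The plan is to recognise $\psi_H\colon TVH_n\to A_n$ as a retraction of $TVH_n$ onto the subgroup $A_n=\langle\gamma_1,\dots,\gamma_n\rangle$ and then invoke the standard splitting lemma: if $\psi\colon G\to H$ is a homomorphism with $H\le G$ and $\psi|_H=\mathrm{id}_H$, then $G=\ker(\psi)\rtimes H$. First I would note that $A_n$ really is a subgroup of $TVH_n$, since $\varphi_H(\gamma_j)=e$ for all $j$ puts each $\gamma_j$ in $\ker(\varphi_H)=TVH_n$; by Corollary \ref{isom}, $A_n\cong\mathbb{Z}_2^n$. Next, the map $\psi_H$ --- already recorded before Theorem \ref{HL_n} as extending to an endomorphism of $TVH_n$ --- restricts to the identity on $A_n$ because it fixes each generator $\gamma_j$. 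Its well-definedness is the routine relation check on the presentation of Theorem \ref{main2}: \eqref{comm-clas2} and \eqref{2classical2} go to trivial identities, \eqref{g5} and \eqref{g6} go to themselves and hold in $\mathbb{Z}_2^n$, \eqref{g7} goes to $\gamma_k=\gamma_k$, and \eqref{g8} goes to $e=\gamma_i\gamma_j\gamma_j\gamma_i=e$. So $\psi_H$ is a retraction with kernel $HL_n$.

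Then I would run the splitting argument. Put $N=HL_n=\ker(\psi_H)$, which is normal in $TVH_n$ as a kernel. For $g\in TVH_n$, write $g=\bigl(g\,\psi_H(g)^{-1}\bigr)\psi_H(g)$; here $\psi_H(g)\in A_n$, and $\psi_H\bigl(g\,\psi_H(g)^{-1}\bigr)=\psi_H(g)\,\psi_H(g)^{-1}=e$ since $\psi_H$ is the identity on $A_n$, so $g\,\psi_H(g)^{-1}\in N$. Hence $TVH_n=N\cdot A_n$. If $x\in N\cap A_n$ then $x=\psi_H(x)=e$, so $N\cap A_n=\{e\}$. Therefore $TVH_n=HL_n\rtimes A_n$. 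As a consistency check, the Reidemeister--Schreier data of Theorem \ref{HL_n} makes $HL_n$ normal of index $2^n=|A_n|$ in $TVH_n$, which matches. Concretely, the conjugation action of $A_n$ on $HL_n$ merely permutes the superscripts of the generators of Theorem \ref{HL_n}: $\gamma_i$ swaps $x_{ij}^{(0)}\leftrightarrow x_{ij}^{(i)}$ and $x_{ij}^{(j)}\leftrightarrow x_{ij}^{(ij)}$, $\gamma_j$ swaps $x_{ij}^{(0)}\leftrightarrow x_{ij}^{(j)}$ and $x_{ij}^{(i)}\leftrightarrow x_{ij}^{(ij)}$, and $\gamma_k$ with $k\notin\{i,j\}$ fixes all four (using $\gamma_k^2=e$, commutativity of the $\gamma$'s, and \eqref{g7}).

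Since each step is a direct computation, I do not expect a real obstacle; the only point needing attention is the well-definedness of $\psi_H$, i.e. the relation check above, which is entirely parallel to the argument already used for $\psi_P$ and $PL_n$ earlier in the section.
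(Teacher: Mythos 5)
Your proposal is correct and follows exactly the argument the paper intends (and leaves implicit): $\psi_H$ is a retraction of $TVH_n$ onto $A_n\cong\mathbb{Z}_2^n$ fixing each $\gamma_j$, its well-definedness is the routine relation check on the presentation of Theorem \ref{main2}, and the standard splitting lemma gives $TVH_n=\ker(\psi_H)\rtimes A_n=HL_n\rtimes A_n$, just as for $\psi_P$ and $PL_n$ earlier in the section. Your added checks (trivial intersection, index $2^n$, and the explicit permutation of the superscripts under conjugation by the $\gamma$'s) are accurate and consistent with Theorem \ref{HL_n}.
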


The natural question that arises is whether $TVP_n$ is isomorphic to $TVH_n$. We will establish this in the subsequent part.

It is not difficult to find,
$$
TVH_2= \langle x_{12},x_{21},\gamma_1,\gamma_2 | x_{12}=\gamma_1\gamma_2x_{21}\gamma_2\gamma_1, \gamma_1\gamma_2=\gamma_2\gamma_1, \gamma_1^2=\gamma_2^2=1\rangle.
$$
 Hence, $TVH_2 \cong TVP_2$.

But in general case  we have
\begin{proposition}
The group $TVH_n$ and $TVP_n$ are not isomorphic for $n\geq 3$.
\end{proposition}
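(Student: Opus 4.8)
The plan is to distinguish $TVP_n$ from $TVH_n$ by an isomorphism invariant, and the cheapest one that works is the torsion-free rank of the abelianization. I would first compute the abelianization of $TVP_n$ from the presentation of Theorem~\ref{sTVP_n}. Abelianizing, relations (\ref{comm-clas}), (\ref{classical}), (\ref{g2}), (\ref{g3}) become trivial, (\ref{g1}) gives $\gamma_i^2=1$, and (\ref{g4}) identifies $\lambda_{ij}$ with $\lambda_{ji}$; hence $TVP_n^{\,\mathrm{ab}}\cong\mathbb{Z}^{\binom{n}{2}}\oplus\mathbb{Z}_2^{n}$, of torsion-free rank $\binom{n}{2}$. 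For the argument one only needs the two bounds: using (\ref{g4}) to delete the generators $\lambda_{ji}$ with $i<j$ shows $TVP_n$ is generated by $\binom{n}{2}$ elements together with the $n$ elements $\gamma_j$ of order dividing $2$, so the rank is at most $\binom{n}{2}$; and the homomorphism onto $\mathbb{Z}^{\binom{n}{2}}$ sending both $\lambda_{ij}$ and $\lambda_{ji}$ to $e_{ij}$ and each $\gamma_j$ to $0$ shows it is at least $\binom{n}{2}$.

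Next I would compute the abelianization of $TVH_n$ from the presentation of Theorem~\ref{main2}. The crucial point is that the braid-type relation (\ref{2classical2}), which abelianizes to $[x_{ik}]=[x_{kj}]$ for all distinct $i,k,j$, together with (\ref{g8}) abelianizing to $[x_{ij}]=[x_{ji}]$, forces \emph{all} of the classes $[x_{ij}]$ to coincide as soon as $n\ge 3$: fixing $k$ and picking $i,i'$ distinct from $k$ and from each other, one has $[x_{ik}]=[x_{ki}]=[x_{ii'}]=[x_{i'k}]$, so $[x_{ik}]$ depends only on $k$; and then $[x_{ik}]=[x_{ki}]$ makes it independent of $k$ as well. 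Thus $TVH_n^{\,\mathrm{ab}}$ is generated by one class of infinite order and $n$ classes of order dividing $2$, while the assignment $x_{ij}\mapsto 1$, $\gamma_j\mapsto 0$ (which respects every defining relation of $TVH_n$, each side having the same total $x$-exponent sum) maps it onto $\mathbb{Z}$; hence $TVH_n^{\,\mathrm{ab}}\cong\mathbb{Z}\oplus\mathbb{Z}_2^{n}$, of torsion-free rank $1$.

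It then remains only to observe that for $n\ge 3$ we have $\binom{n}{2}\ge 3>1$, so the abelianizations of $TVP_n$ and $TVH_n$ have different ranks, whence the groups are non-isomorphic. I do not expect a real obstacle here; the one step that needs genuine care is the collapsing of all $[x_{ij}]$ in $TVH_n^{\,\mathrm{ab}}$, and it is precisely there that the hypothesis $n\ge 3$ enters, since (\ref{2classical2}) yields a relation among the $x$-classes only when three distinct strand indices are available — which is also why the excerpt records that $TVH_2\cong TVP_2$. Alternatively, the whole argument can be phrased purely in terms of the explicit homomorphisms onto $\mathbb{Z}^{\binom{n}{2}}$ and onto $\mathbb{Z}$, extracting only the ranks and bypassing a complete description of either abelianization.
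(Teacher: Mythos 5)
Your proposal is correct and follows essentially the same route as the paper: both arguments distinguish the groups by their abelianizations, computing $TVP_n^{\mathrm{ab}}\cong\mathbb{Z}^{\binom{n}{2}}\oplus\mathbb{Z}_2^n$ versus $TVH_n^{\mathrm{ab}}\cong\mathbb{Z}\oplus\mathbb{Z}_2^n$. The only difference is that where the paper cites \cite[Proposition 19]{BB} for the collapse of all the classes $[x_{ij}]$, you supply the short direct computation from the braid-type relation, which is a worthwhile addition but not a different method.
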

\begin{proof}
It is sufficient to prove that the abelianisation of $TVP_n$ and $TVH_n$ are different. 

From the relation~(\ref{g4}) follows that in the  abelianisation of $TVP_n$, $\lambda_{ij}=\lambda_{ji}$. So, abelianisation of $TVP_n$ is isomorphic to $\mathbb{Z}^{\frac{n(n-1)}{2}}\bigoplus \mathbb{Z}_2^n$. 
By \cite[Proposition 19]{BB} and relation~(\ref{g8}) in the  abelianisation of $TVH_n$,  all $x_{ij}$ are identified. Hence, abelianisation of $TVH_n$ is $\mathbb{Z} \bigoplus \mathbb{Z}_2^n$.
\end{proof}

\medskip

\bigskip


\section{Maps $\varphi_{PT}$ and $\varphi_{HT} $} \label{PT}

Let us defined two endomorphism of $TVB_n$ by the actions on the generators,
$$
\varphi_{PT} \colon TVB_n \to TS_n,~~\sigma_i \mapsto \rho_i, ~~\rho_i \mapsto \rho_i,~~i=1, 2, \ldots,n-1,~~\gamma_j \mapsto \gamma_j,~~j=1, 2, \ldots, n,
$$
$$
\varphi_{HT} \colon TVB_n \to TS_n,~~\sigma_i \mapsto e, ~~\rho_i \mapsto \rho_i,~~i=1, 2, \ldots,n-1,~~\gamma_j \mapsto \gamma_j,~~j=1, 2, \ldots, n,
$$
where $TS_n$ is the subgroup of $TVB_n$ that is generated by
$$
\rho_1, \rho_2, \ldots, \rho_{n-1}, \gamma_1, \gamma_2, \ldots, \gamma_n.
$$

Considering the defining relations of $TVB_n$, we get

\begin{proposition}
$$
Im(\varphi_{PT}) = Im(\varphi_{HT}) = TS_n = \langle \rho_1, \rho_2, \ldots, \rho_{n-1}, \gamma_1, \gamma_2, \ldots, \gamma_n \rangle
$$
is isomorphic to the extended symmetric group and hence, has a presentation $TS_n = A_n \rtimes S_n$.
\end{proposition}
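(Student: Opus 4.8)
The plan is to handle the Proposition in three moves: identify the images, write down the presentation one expects for $TS_n$, and then rule out hidden relations by an order count.

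First I would observe that $\varphi_{PT}$ and $\varphi_{HT}$ are well defined and their images exhaust $TS_n$. Well-definedness is routine: under either map every defining relation of $TVB_n$ from Subsection \ref{tvbg} involving some $\sigma_i$ collapses to an identity already available among the $\rho_j,\gamma_k$ — for instance under $\varphi_{PT}$ the relation (\ref{rel-twist-III}) becomes $\rho_i=\gamma_{i+1}\gamma_i\rho_i\gamma_i\gamma_{i+1}$, which follows from (\ref{rel-bv}) together with $\gamma_i^2=\gamma_{i+1}^2=e$, while under $\varphi_{HT}$ it becomes $e=e$. Since $TS_n$ is generated by the $\rho_i$ and $\gamma_j$ and each map fixes all these generators, the images equal $TS_n$.

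Next, let $G$ be the group given abstractly by generators $r_1,\dots,r_{n-1},g_1,\dots,g_n$ with relations $r_i^2=e$, the braid relations among the $r_i$, $g_j^2=e$, $g_jg_k=g_kg_j$, $g_jr_i=r_ig_j$ for $j\neq i,i+1$, and $r_ig_i=g_{i+1}r_i$. This is exactly the standard presentation of the extended symmetric group $\mathbb{Z}_2\wr S_n\cong\mathbb{Z}_2^n\rtimes S_n$, where $S_n=\langle r_1,\dots,r_{n-1}\rangle$ permutes the coordinates of $\mathbb{Z}_2^n=\langle g_1,\dots,g_n\rangle$; in particular $|G|=2^n\,n!$. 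All of these relations hold among the $\rho_i,\gamma_j$ in $TVB_n$, being precisely (\ref{rel-inverse-b}), (\ref{rel-height-bb}), (\ref{rel-bv}) and (\ref{rel-height-bv}) together with the relations of $S_n=\langle\rho_1,\dots,\rho_{n-1}\rangle$ already present in $VB_n$; hence $r_i\mapsto\rho_i,\ g_j\mapsto\gamma_j$ defines a surjection $\pi\colon G\twoheadrightarrow TS_n$. It therefore suffices to prove $|TS_n|\geq 2^n\,n!$, for then $\pi$ is an isomorphism and $TS_n\cong A_n\rtimes S_n$ as claimed.

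For the lower bound I would use the two subgroups already understood. The subgroup $A_n=\langle\gamma_1,\dots,\gamma_n\rangle$ is normal in $TS_n$: conjugation by $\rho_i$ permutes the $\gamma_j$ by Lemma \ref{gamma}, conjugation by a $\gamma_i$ stabilizes $A_n$, and $\rho_i,\gamma_j$ generate $TS_n$; by Corollary \ref{isom}, $A_n\cong\mathbb{Z}_2^n$, so $|A_n|=2^n$. The quotient $TS_n/A_n$ is generated by the cosets of the $\rho_i$, which satisfy the defining relations of $S_n$, so it is a quotient of $S_n$; on the other hand $\varphi_P\colon TVB_n\to S_n$ from the start of Section \ref{pure} restricts to an epimorphism $TS_n\to S_n$ whose kernel contains $A_n$, so it factors through $TS_n/A_n$ and exhibits $TS_n/A_n$ as surjecting onto $S_n$. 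A finite group that is simultaneously a quotient of $S_n$ and maps onto $S_n$ has order exactly $n!$, so $|TS_n|=|A_n|\cdot|TS_n/A_n|=2^n\,n!$, and $\pi$ is an isomorphism. I expect the delicate point to be precisely this computation of $|TS_n|$ — equivalently, that $A_n$ is the \emph{full} kernel of $\varphi_P|_{TS_n}$ — and the argument avoids attacking that head-on by playing the lower bound $|A_n|=2^n$ of Corollary \ref{isom} against the upper bound $|TS_n|\le|G|=2^n\,n!$ coming from the presentation of the extended symmetric group; the rest is bookkeeping verifying that the needed identities already appear among the defining relations of $TVB_n$.
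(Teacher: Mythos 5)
Your proof is correct, and the images/well-definedness part coincides with the paper's computation: the only relation of $TVB_n$ that needs checking is (\ref{rel-twist-III}), which collapses via (\ref{rel-bv}) and the commutativity and involutivity of the $\gamma$'s, exactly as you do. Where you genuinely diverge is in how you rule out hidden relations in $TS_n$. The paper's (terse) argument is a retraction argument: the verified substitution defines a homomorphism from $TVB_n$ to the abstractly presented extended symmetric group $E=\mathbb{Z}_2^n\rtimes S_n$, and since the composite $E\twoheadrightarrow TS_n\hookrightarrow TVB_n\to E$ fixes the generators, the natural surjection $E\to TS_n$ is split injective, hence an isomorphism; no counting is needed. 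You instead squeeze the order: $|TS_n|\le 2^n\,n!$ from the surjection $\pi\colon G\to TS_n$, and $|TS_n|\ge 2^n\,n!$ from the normality of $A_n$ (Lemma \ref{gamma}), the identification $A_n\cong\mathbb{Z}_2^n$ (Corollary \ref{isom}), and the fact that $TS_n/A_n$ is a quotient of $S_n$ that surjects onto $S_n$, hence has order exactly $n!$. This is valid, but it imports Corollary \ref{isom}, which in the paper rests on the Reidemeister--Schreier presentation of $TVP_n$ (Theorem \ref{sTVP_n}); the retraction route is lighter, works verbatim for infinite analogues where order counting is unavailable, and in fact re-proves $A_n\cong\mathbb{Z}_2^n$ for free by restricting the retraction to $A_n$. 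What your version buys in exchange is an explicit determination of $|TS_n|=2^n\,n!$ and of the quotient $TS_n/A_n\cong S_n$, which the paper leaves implicit.
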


\begin{proof}
It is need to show that under these endomorphism any relation of $TVB_n$ goes to a relation which holds in extended symmetric group. For $\varphi_{PT}$ it is evident for all relations, except the relation (\ref{rel-twist-III}),
$$
\rho_i \sigma_i \rho_i  = \gamma_{i+1} \gamma_{i} \, \sigma_i \, \gamma_{i}  \, \gamma_{i+1}, 
$$   
which goes to the relation
$$
\rho_i  = \gamma_{i+1} \gamma_{i} \, \rho_i\, \gamma_{i}  \, \gamma_{i+1} ~\Leftrightarrow~e = (\rho_i  \gamma_{i+1} \gamma_{i} \, \rho_i ) \, \gamma_{i}  \, \gamma_{i+1}, 
$$
and using  (\ref{rel-bv}),
$$
    \rho_i \gamma_i = \gamma_{i+1} \rho_i,
$$
we get 
$$
e = \gamma_{i} \gamma_{i+1} \, \gamma_{i}  \, \gamma_{i+1}. 
$$
Since $\gamma_{i}$ and $\gamma_{i+1}$ are commute, we get the trivial relation.

For the endomorphism $\varphi_{HT}$ the claim is evident.
\end{proof}

Further  we will find sets of generators and defining relations for $\ker(\varphi_{PT})$ and $\ker(\varphi_{HT})$.

\begin{lemma} \label{conj1}
In $TVP_n$ the following conjugation rules hold
$$
\lambda_{ij}^{\gamma_k} =  \left\{
\begin{array}{ll}
\vspace{0.1 cm}

\lambda_{ij} & \text{for }~ i<k<j \text{ or } k<i \text{ or } k>j,  \\
\vspace{0.1 cm}

\lambda_{ji}^{\gamma_j} & \text{ for } k=i, \\ 
\vspace{0.1 cm}

\lambda_{ji}^{\gamma_i} & \text{ for } k=j.
\end{array} \right.
$$
\end{lemma}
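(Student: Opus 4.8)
The plan is to derive all three cases directly from the presentation of $TVP_n$ obtained in Theorem~\ref{sTVP_n}, using nothing beyond the relations (\ref{g1})--(\ref{g4}). The one preliminary remark is that, since $\gamma_k^2=e$ by (\ref{g1}), we have $\gamma_k^{-1}=\gamma_k$, so conjugation by $\gamma_k$ is simply $\lambda_{ij}^{\gamma_k}=\gamma_k\lambda_{ij}\gamma_k$.

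For the first case, observe that the stated index range (``$i<k<j$ or $k<i$ or $k>j$'') is exactly the condition $k\notin\{i,j\}$. In that situation relation~(\ref{g3}) gives $\lambda_{ij}\gamma_k=\gamma_k\lambda_{ij}$, and hence $\lambda_{ij}^{\gamma_k}=\lambda_{ij}$. For the case $k=i$, I would substitute relation~(\ref{g4}) in the form $\lambda_{ij}=\gamma_i\gamma_j\lambda_{ji}\gamma_j\gamma_i$ and then cancel the two outer pairs of $\gamma_i$'s using $\gamma_i^2=e$:
$$
\lambda_{ij}^{\gamma_i}=\gamma_i\bigl(\gamma_i\gamma_j\lambda_{ji}\gamma_j\gamma_i\bigr)\gamma_i=\gamma_j\lambda_{ji}\gamma_j=\lambda_{ji}^{\gamma_j}.
$$
For $k=j$, the same substitution together with the commutativity $\gamma_i\gamma_j=\gamma_j\gamma_i$ from (\ref{g2}) (which, combined with $\gamma_j^2=e$, gives $\gamma_j\gamma_i\gamma_j=\gamma_i$) yields
$$
\lambda_{ij}^{\gamma_j}=\gamma_j\bigl(\gamma_i\gamma_j\lambda_{ji}\gamma_j\gamma_i\bigr)\gamma_j=\bigl(\gamma_j\gamma_i\gamma_j\bigr)\lambda_{ji}\bigl(\gamma_j\gamma_i\gamma_j\bigr)=\gamma_i\lambda_{ji}\gamma_i=\lambda_{ji}^{\gamma_i}.
$$

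There is essentially no obstacle to overcome here: the lemma is a routine bookkeeping consequence of Theorem~\ref{sTVP_n}, and the only point requiring attention is matching each index configuration to the relevant relation among (\ref{g1})--(\ref{g4}). I would simply present the three computations above as the full proof.
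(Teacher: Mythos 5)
Your proof is correct and is exactly the routine verification the paper leaves implicit (the lemma is stated without proof there): case one is relation~(\ref{g3}) plus $\gamma_k^2=e$, and the cases $k=i$, $k=j$ follow by conjugating relation~(\ref{g4}) and cancelling with (\ref{g1}) and (\ref{g2}), just as you compute. No gaps; the only cosmetic point is to note that the condition ``$i<k<j$ or $k<i$ or $k>j$'' is simply $k\notin\{i,j\}$, which you already do.
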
 

Using this lemma and the presentation of $TVP_n$ it is easy to prove the following.

\begin{proposition}
The  kernel $PT_n = \ker(\varphi_{PT})$ is the group that is generated by the elements 
$$\lambda_{ij},~~\lambda^{\gamma_i}_{ij},~~\lambda^{\gamma_j}_{ij},~~1 \leq i\neq j \leq n,$$ 
with defining relations (\ref{comm-clas}), (\ref{classical}), relations from Lemma \ref{conj1}, and the following relations:
$$\lambda_{ij}^{m}\lambda_{kl}^{n}=\lambda_{kl}^{n}\lambda_{ij}^{m}, \text{ where } m\in \{e,\gamma_i,\gamma_j\}, \text{ and } n\in \{e, \gamma_k,\gamma_l\},$$

$$\lambda_{ki}\lambda_{kj}^{\gamma_j}\lambda_{ij}^{\gamma_j} =\lambda_{ij}^{\gamma_j}\lambda_{kj}^{\gamma_j}\lambda_{ki}\text{, \hspace{0.3cm}} \lambda_{ki}^{\gamma_k}\lambda_{kj}^{\gamma_k}\lambda_{ij}=\lambda_{ij}\lambda_{kj}^{\gamma_k}\lambda_{ki}^{\gamma_k},$$

$$\lambda_{ki}^{\gamma_i}\lambda_{kj}\lambda_{ij}^{\gamma_i} =\lambda_{ij}^{\gamma_i}\lambda_{kj}\lambda_{ki}^{\gamma_i}\text{, \hspace{0.3cm}} \lambda_{ik}\lambda_{kj}^{\gamma_k}\lambda_{ij}^{\gamma_i} =\lambda_{ij}^{\gamma_i}\lambda_{kj}^{\gamma_k}\lambda_{ik},$$

$$\lambda_{ki}^{\gamma_k}\lambda_{jk}\lambda_{ij}^{\gamma_j} =\lambda_{ij}^{\gamma_j}\lambda_{jk}\lambda_{ki}^{\gamma_k} \text{, \hspace{0.3cm}} \lambda_{ki}^{\gamma_i}\lambda_{kj}^{\gamma_j}\lambda_{ji}=\lambda_{ji}\lambda_{kj}^{\gamma_j}\lambda_{ki}^{\gamma_i},$$

$$\lambda_{ik}\lambda_{jk}\lambda_{ji} =\lambda_{ji}\lambda_{jk}\lambda_{ik}.$$
\end{proposition}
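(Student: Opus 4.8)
The plan is to apply the Reidemeister--Schreier method to the endomorphism $\varphi_{PT}\colon TVB_n \to TS_n$, but carried out relative to the already-known presentation of $TVP_n$ from Theorem~\ref{sTVP_n} rather than from $TVB_n$ directly. First I observe that $\varphi_{PT}$ restricts to an epimorphism $TVP_n \to A_n$ (since $\varphi_{PT}(\lambda_{kl})$ lies in $\langle \rho_1,\dots,\rho_{n-1}\rangle \cap$ complement of the symmetric part — more precisely $\varphi_{PT}(\lambda_{i,i+1}) = \varphi_{PT}(\rho_i\sigma_i^{-1}) = \rho_i\rho_i = e$, and likewise all $\lambda_{kl}\mapsto e$, while $\gamma_j\mapsto\gamma_j$), so $PT_n = \ker(\varphi_{PT}) = \ker(\varphi_{PT}|_{TVP_n})$ is a normal subgroup of $TVP_n$ of index $2^n$, with $TVP_n/PT_n \cong A_n \cong \mathbb{Z}_2^n$. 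As a Schreier transversal I take $T = \{\gamma_1^{\varepsilon_1}\cdots\gamma_n^{\varepsilon_n} : \varepsilon_j\in\{0,1\}\}$, which is clearly a set of coset representatives and is prefix-closed in the $\gamma$-letters.

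Next I run the Reidemeister--Schreier generator computation on the generating set $\{\lambda_{ij}\} \cup \{\gamma_j\}$ of $TVP_n$. The generators $s_{t,\gamma_j} = t\gamma_j\overline{(t\gamma_j)}^{-1}$ are all trivial since $t\gamma_j \in T$ again. The generators $s_{t,\lambda_{ij}} = t\,\lambda_{ij}\,t^{-1}$ (the coset of $t\lambda_{ij}$ is $t$, as $\lambda_{ij}\mapsto e$) are the conjugates $\lambda_{ij}^{t^{-1}}$ for $t\in T$. Here is where Lemma~\ref{conj1} does the real work: it tells us that conjugating $\lambda_{ij}$ by $\gamma_k$ returns $\lambda_{ij}$ itself unless $k\in\{i,j\}$, and when $k=i$ (resp. $k=j$) it returns $\lambda_{ji}^{\gamma_j}$ (resp. $\lambda_{ji}^{\gamma_i}$). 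Iterating, since $\gamma_i$ and $\gamma_j$ commute and $\gamma_k^2 = e$, every conjugate $\lambda_{ij}^{t}$ collapses to one of the four elements $\lambda_{ij},\ \lambda_{ij}^{\gamma_i},\ \lambda_{ij}^{\gamma_j},\ \lambda_{ij}^{\gamma_i\gamma_j}$; and by Lemma~\ref{conj1} again $\lambda_{ij}^{\gamma_i\gamma_j} = (\lambda_{ji}^{\gamma_j})^{\gamma_j} = \lambda_{ji}$ — wait, more carefully, $\lambda_{ij}^{\gamma_i\gamma_j}$ and $\lambda_{ji}$ get identified, so the four-element list for the pair $(i,j)$ together with that for $(j,i)$ produces exactly the generating set claimed, namely $\lambda_{ij},\ \lambda_{ij}^{\gamma_i},\ \lambda_{ij}^{\gamma_j}$ for all ordered pairs $i\neq j$. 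This reconciles with the stated generating set.

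Then I apply the rewriting process $\tau$ to the defining relators of $TVP_n$, conjugated by each $t\in T$. The relators $\gamma_i^2$, $\gamma_i\gamma_j\gamma_i^{-1}\gamma_j^{-1}$ rewrite to the trivial word (they live in $A_n$, which is the quotient). The relator (\ref{g3}), $\lambda_{ij}\gamma_k\lambda_{ij}^{-1}\gamma_k^{-1}$, rewritten and conjugated by $t$, yields $\lambda_{ij}^{m}\gamma_k(\lambda_{ij}^{m})^{-1}\gamma_k^{-1}$-type words that, after using Lemma~\ref{conj1} to express $(\lambda_{ij})^{\gamma_k}$, give precisely the commutation relations $\lambda_{ij}^m\lambda_{kl}^n = \lambda_{kl}^n\lambda_{ij}^m$ of the stated list (the $\gamma$'s themselves being already quotiented out, what survives is commutation among the $\lambda$-conjugates with disjoint index sets, plus the defining relations of Lemma~\ref{conj1} recording $\lambda_{ij}^{\gamma_k}$). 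The relator (\ref{g4}) is already consumed in identifying generators (it becomes $\lambda_{ij} = \lambda_{ji}^{\gamma_j\gamma_i}$, i.e. the Lemma~\ref{conj1} identifications), so it contributes nothing new. The classical relators (\ref{comm-clas}) and (\ref{classical}) are the ones whose conjugates by $t\in T$ produce the bulk of the list: conjugating $\lambda_{ki}\lambda_{kj}\lambda_{ij} = \lambda_{ij}\lambda_{kj}\lambda_{ki}$ by $\gamma_i^{\varepsilon_i}\gamma_j^{\varepsilon_j}\gamma_k^{\varepsilon_k}$ and applying Lemma~\ref{conj1} to each factor yields, for each of the $2^3 = 8$ sign choices, one of the relations in the displayed block $\lambda_{ki}\lambda_{kj}^{\gamma_j}\lambda_{ij}^{\gamma_j} = \lambda_{ij}^{\gamma_j}\lambda_{kj}^{\gamma_j}\lambda_{ki}$, $\dots$, $\lambda_{ik}\lambda_{jk}\lambda_{ji} = \lambda_{ji}\lambda_{jk}\lambda_{ik}$ (eight relations in all, matching the eight lines listed), after re-indexing $\lambda_{ij}^{\gamma_i\gamma_j} \rightsquigarrow \lambda_{ji}$. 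Finally I invoke Theorem~2.9 of~\cite{MKS} to conclude these generators and relations present $PT_n$, and I note that several $t$-conjugates coincide after the identifications, so the redundant ones may be dropped, leaving exactly the stated list.

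The main obstacle is bookkeeping: tracking, for each classical relator and each of the $2^3$ conjugating elements, exactly which of the four conjugate-symbols each $\lambda_{ab}^{\gamma_c^{\varepsilon}}$ becomes under Lemma~\ref{conj1} (the three indices $i,j,k$ interact through the ``$i<k<j$'' clause, so the outcome genuinely depends on the relative order of the indices), and then checking that after the $\lambda_{ij}^{\gamma_i\gamma_j}\leftrightarrow\lambda_{ji}$ substitutions the $2^3$ outputs collapse precisely onto the eight displayed relations with no surplus and no deficit. I expect no conceptual difficulty beyond this careful case analysis; the structure is entirely parallel to the derivation of Theorem~\ref{PL_n}, and indeed one can present it as ``the same computation, with $A_n$ replaced by the smaller quotient $\mathbb{Z}_2^n$ coming from forgetting that $\sigma_i\mapsto\rho_i$ rather than $\sigma_i\mapsto e$,'' which is why the proof is stated as ``easy to prove.''
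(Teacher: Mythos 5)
Your proposal is correct and matches the paper's (unwritten) argument: the paper likewise obtains this presentation by applying the Reidemeister--Schreier method to $TVP_n\to A_n$ with the transversal $\{\gamma_1^{\varepsilon_1}\cdots\gamma_n^{\varepsilon_n}\}$, using Lemma~\ref{conj1} to collapse the conjugate generators to $\lambda_{ij},\lambda_{ij}^{\gamma_i},\lambda_{ij}^{\gamma_j}$ and conjugating the relators of Theorem~\ref{sTVP_n} by the $2^{|\{i,j,k,l\}|}$ relevant sign patterns. The only slip is attributing the mixed commutation relations $\lambda_{ij}^{m}\lambda_{kl}^{n}=\lambda_{kl}^{n}\lambda_{ij}^{m}$ to relator~(\ref{g3}) --- that relator only reproduces the identifications of Lemma~\ref{conj1}; those commutations come from the $T$-conjugates of~(\ref{comm-clas}), as your next sentence in effect acknowledges.
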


As corollary we get

\begin{corollary}
 $TVB_n = PT_n \rtimes TS_n$.
\end{corollary}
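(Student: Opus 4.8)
The plan is to exploit that $\varphi_{PT}$ is a retraction of $TVB_n$ onto its subgroup $TS_n$. First I would observe that, by the very definition of $\varphi_{PT}$ on generators, its restriction to $TS_n = \langle \rho_1, \ldots, \rho_{n-1}, \gamma_1, \ldots, \gamma_n \rangle$ fixes every generator, so $\varphi_{PT}|_{TS_n} = \mathrm{id}_{TS_n}$. Combined with the preceding Proposition, which gives $\mathrm{Im}(\varphi_{PT}) = TS_n$, this shows that $\varphi_{PT}$ is an idempotent endomorphism of $TVB_n$, i.e. $\varphi_{PT}^2 = \varphi_{PT}$.

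Next, since $PT_n = \ker(\varphi_{PT})$ is the kernel of a homomorphism, it is normal in $TVB_n$, and I would then verify the two conditions for an internal semidirect product decomposition. For $PT_n \cap TS_n = \{e\}$: if $w \in TS_n$ also lies in $PT_n$, then $w = \varphi_{PT}(w) = e$, because $\varphi_{PT}$ acts as the identity on $TS_n$. For $PT_n \cdot TS_n = TVB_n$: given any $g \in TVB_n$, write $g = \bigl(g\,\varphi_{PT}(g)^{-1}\bigr) \cdot \varphi_{PT}(g)$; the second factor lies in $TS_n = \mathrm{Im}(\varphi_{PT})$, while the first lies in $PT_n$ since, by idempotency, $\varphi_{PT}\bigl(g\,\varphi_{PT}(g)^{-1}\bigr) = \varphi_{PT}(g)\,\varphi_{PT}\bigl(\varphi_{PT}(g)\bigr)^{-1} = \varphi_{PT}(g)\,\varphi_{PT}(g)^{-1} = e$. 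Together these give $TVB_n = PT_n \rtimes TS_n$, with $TS_n$ acting on $PT_n$ by conjugation inside $TVB_n$; on the generators $\lambda_{ij}^{m}$ of $PT_n$ this action is described explicitly by Lemma~\ref{conj1} and Lemma~\ref{gamma}.

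There is essentially no serious obstacle here: the corollary is a formal consequence of $\varphi_{PT}$ being a retraction, once one knows that $\varphi_{PT}$ is a well-defined endomorphism (checked when the map was introduced) and that its image is exactly $TS_n$ (the preceding Proposition). Equivalently, one may phrase the argument via the short exact sequence $1 \to PT_n \to TVB_n \xrightarrow{\varphi_{PT}} TS_n \to 1$, which splits because the inclusion $TS_n \hookrightarrow TVB_n$ is a section of $\varphi_{PT}$, precisely by $\varphi_{PT}|_{TS_n} = \mathrm{id}$. The only point worth stating carefully is that $TS_n$ really is a subgroup of $TVB_n$ on which $\varphi_{PT}$ restricts to the identity, and this is immediate from the action on generators.
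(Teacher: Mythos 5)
Your argument is correct and is exactly the reasoning the paper leaves implicit: $\varphi_{PT}$ is a retraction of $TVB_n$ onto the subgroup $TS_n$ (it fixes the generators $\rho_i,\gamma_j$ of $TS_n$), so $TVB_n$ splits as $\ker(\varphi_{PT})\rtimes \mathrm{Im}(\varphi_{PT}) = PT_n\rtimes TS_n$. This matches the paper's approach, which derives the corollary directly from the preceding computations of $\mathrm{Im}(\varphi_{PT})=TS_n$ and $PT_n=\ker(\varphi_{PT})$.
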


 Comparing the presentation of of $PT_n$ with the presentation of $PL_n$, we can see that these groups are isomorphic.

\begin{proposition} \label{isom1}
$PT_n \cong PL_n$ for all $n \geq 2$.
\end{proposition}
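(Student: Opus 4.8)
The plan is to show that $PT_n$ and $PL_n$ are not merely isomorphic but literally the same subgroup of $TVB_n$; then the asserted isomorphism is the identity. The starting observation is that $\varphi_P$ factors through $\varphi_{PT}$: since, by the Proposition preceding this one, $TS_n \cong A_n \rtimes S_n$ with $A_n$ normal, there is a projection $\pi\colon TS_n \to S_n$ killing every $\gamma_j$ and fixing every $\rho_i$, and comparing the two composites on the generators $\sigma_i,\rho_i,\gamma_j$ of $TVB_n$ gives $\varphi_P = \pi \circ \varphi_{PT}$. Hence $PT_n = \ker \varphi_{PT} \subseteq \ker \varphi_P = TVP_n$.

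Next I would identify the restriction $\varphi_{PT}|_{TVP_n}$ with $\psi_P$. On the one hand $\pi \circ \varphi_{PT}|_{TVP_n} = \varphi_P|_{TVP_n}$ is trivial, so $\varphi_{PT}(TVP_n) \subseteq \ker \pi = A_n$. On the other hand, on the generators of $TVP_n$ from Theorem \ref{sTVP_n} one computes $\varphi_{PT}(\lambda_{i,i+1}) = \varphi_{PT}(\rho_i \sigma_i^{-1}) = \rho_i \rho_i^{-1} = e$, whence $\varphi_{PT}(\lambda_{kl}) = e$ for every $k \neq l$ (each $\lambda_{kl}$ being a conjugate of $\lambda_{i,i+1}^{\pm 1}$ by a product of $\rho$'s), while $\varphi_{PT}(\gamma_j) = \gamma_j$. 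These are precisely the defining values of $\psi_P$, so $\varphi_{PT}|_{TVP_n} = \psi_P$ as maps into $A_n$. Combining with the previous paragraph,
$$
PT_n = \ker \varphi_{PT} = \ker \varphi_{PT} \cap TVP_n = \ker\bigl(\varphi_{PT}|_{TVP_n}\bigr) = \ker \psi_P = PL_n ,
$$
which proves the proposition with no computation beyond checking that the relevant maps agree on generators.

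An alternative, more computational route — presumably the one the remark before the proposition has in mind — is to compare presentations directly. Using Lemma \ref{conj1}, the generators $\lambda_{ji}^{\gamma_j}$ and $\lambda_{ji}^{\gamma_i}$ of $PT_n$ equal $\lambda_{ij}^{\gamma_i}$ and $\lambda_{ij}^{\gamma_j}$ respectively, and $\lambda_{ji} = \lambda_{ij}^{\gamma_i\gamma_j}$ by relation (\ref{g4}); so for each pair $i<j$ the distinct generators of $PT_n$ are $\lambda_{ij}, \lambda_{ij}^{\gamma_i}, \lambda_{ij}^{\gamma_j}, \lambda_{ji}$, which one matches with $\lambda_{ij}^{(0)}, \lambda_{ij}^{(i)}, \lambda_{ij}^{(j)}, \lambda_{ij}^{(ij)}$ of $PL_n$ as in Theorem \ref{PL_n}. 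One then checks that, under this dictionary, the relation list of Theorem \ref{PL_n} and the relation list in the Proposition describing $PT_n$ translate into one another. The main obstacle here is purely the bookkeeping: both lists are long, and one must keep the superscript conventions straight, applying Lemma \ref{conj1} repeatedly, to verify that each relation on one side is a consequence of those on the other. The structural argument above sidesteps all of this, and I would take it as the actual proof, keeping the presentation comparison only as a consistency check.
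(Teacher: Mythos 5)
Your argument is correct, and it takes a genuinely different route from the paper. The paper proves the proposition by comparing presentations: using Lemma \ref{conj1} it identifies $\lambda_{ji}^{\gamma_j}$ with $\lambda_{ij}^{\gamma_i}$ and $\lambda_{ji}$ with $\lambda_{ij}^{\gamma_i\gamma_j}$, matches the resulting generating set of $PT_n$ with the generators $\lambda_{ij}^{(0)},\lambda_{ij}^{(i)},\lambda_{ij}^{(j)},\lambda_{ij}^{(ij)}$ of $PL_n$, and then asserts that the two relation lists coincide --- precisely the bookkeeping you relegate to a consistency check. Your structural argument instead establishes the stronger statement that $PT_n$ and $PL_n$ are the \emph{same} subgroup of $TVB_n$: the factorization $\varphi_P=\pi\circ\varphi_{PT}$ (using $TS_n=A_n\rtimes S_n$ from the preceding proposition) gives $\ker\varphi_{PT}\subseteq TVP_n$, and the computation $\varphi_{PT}(\lambda_{i,i+1})=\rho_i\rho_i^{-1}=e$ together with the conjugation formulas for the remaining $\lambda_{kl}$ identifies $\varphi_{PT}|_{TVP_n}$ with $\psi_P$, whence $\ker\varphi_{PT}=\ker\psi_P$. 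What your approach buys is significant: it avoids verifying that two long relation lists translate into one another (a step the paper passes over with ``it follows that the relations will also be the same''), it does not depend on the correctness of either explicit presentation, and it yields equality of subgroups rather than an abstract isomorphism, which in turn makes the commutativity of the first diagram in the final proposition transparent. The only ingredients you use --- well-definedness of $\varphi_{PT}$ and the decomposition $TS_n=A_n\rtimes S_n$ --- are both established before this point in the paper, so the argument is self-contained.
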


\begin{proof}
From Lemma 4.2, we can notice, for $i>j$; $\lambda_{ij}^{\gamma_i}=\lambda_{ji}^{\gamma_j}$ and $\lambda_{ij}^{\gamma_j}=\lambda_{ji}^{\gamma_i}$.

Also, $\lambda_{ij}^{\gamma_i}=\lambda_{ji}^{\gamma_j}$ implies $\lambda_{ij}=\lambda_{ji}^{\gamma_j\gamma_i}$.
So, we can remove elements $\lambda^{\gamma_k}_{ij}$ for $i > j$ and replace $\lambda_{ij}$ by $\lambda_{ji}^{\gamma_j\gamma_i}$ for $i > j$ .
Therefore, proved that the generators of the groups $PL_n$ and $PT_n$ are identical, it follows that the relations will also be the same.

\end{proof}

\medskip

We want to find the kernel $\ker(\varphi_{HT})$.

\begin{example}
Let us consider the case  $n=3$. In this case,  $\ker(\varphi_{HT})$  is  the normal closure of $B_3$ in $TVB_3$. As we know, the normal closure of $B_3$ in $VB_3$ is the subgroup $VH_3$ which is generated by elements
$$
x_{12} = \sigma_1,~~x_{13} = \rho_2 \sigma_1 \rho_2,~~x_{23} = \sigma_2,
$$
$$
x_{21} = \rho_1 \sigma_1 \rho_1,~~x_{31} = \rho_2 \rho_1 \sigma_1 \rho_1  \rho_2,~~x_{32} = \rho_2 \sigma_2 \rho_2.
$$
In   $TVB_3$ we have relations
$$
\gamma_1 \sigma_2 = \sigma_2 \gamma_1,~~\gamma_3 \sigma_1 = \sigma_1 \gamma_3, 
$$
$$
\rho_1 \sigma_1 \rho_1 = \gamma_2  \gamma_1  \sigma_1 \gamma_1 \gamma_2,~~\rho_2 \sigma_2 \rho_2 = \gamma_3  \gamma_2  \sigma_2 \gamma_2 \gamma_3.
$$
We can rewrite these relations in the form
$$
x_{23}^{\gamma_1} = x_{23},~~~x_{12}^{\gamma_3} = x_{12},
$$
$$
x_{21}^{ \gamma_2} = x_{12}^{\gamma_1},~~~x_{32}^{ \gamma_3} = x_{23}^{\gamma_2}.
$$
Conjugating these relations by $\rho_1$, we get
$$
x_{13}^{\gamma_2} = x_{13},~~~x_{21}^{\gamma_3} = x_{21},
$$
$$
x_{12}^{ \gamma_1} = x_{21}^{\gamma_2},~~~x_{31}^{ \gamma_3} = x_{13}^{\gamma_1}.
$$
Conjugating these relations by $\rho_2$, we get
$$
x_{32}^{\gamma_1} = x_{32},~~~x_{13}^{\gamma_2} = x_{13},
$$
$$
x_{31}^{ \gamma_3} = x_{13}^{\gamma_1},~~~x_{23}^{ \gamma_2} = x_{32}^{\gamma_3}.
$$
\end{example}

Analysing the conjugation rules from this example, one can prove the next lemma.

\begin{lemma} \label{conj}
In $TVH_n$ the following conjugation rules hold
$$
x_{ij}^{\gamma_k} =  \left\{
\begin{array}{ll}
\vspace{0.1 cm}

x_{ij} & \text{for }~ i<k<j \text{ or } k<i \text{ or } k>j,  \\ 

\vspace{0.1 cm}

x_{ji}^{\gamma_j} & \text{ for } k=i, \\ 
\vspace{0.1 cm}

x_{ji}^{\gamma_i} & \text{ for } k=j.
\end{array} \right.
$$
\end{lemma}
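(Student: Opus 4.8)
The plan is to prove all three cases by a direct computation inside the presentation of $TVH_n$ given by Theorem~\ref{main2}, using only that the $\gamma_k$ are pairwise commuting involutions together with the mixed relations (\ref{g7}) and (\ref{g8}). Since $\gamma_k^2=e$ by (\ref{g5}), for any element $x$ one has $x^{\gamma_k}=\gamma_k x\gamma_k$, so in each case it suffices to slide the two copies of $\gamma_k$ past $x_{ij}$.

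First I would dispose of the case $k\notin\{i,j\}$, which is precisely the first line of the statement: when $i<j$ the three conditions ``$i<k<j$'', ``$k<i$'', ``$k>j$'' together enumerate exactly the indices $k$ distinct from $i$ and $j$, and for $i>j$ the reading is symmetric. Here relation (\ref{g7}), $x_{ij}\gamma_k=\gamma_k x_{ij}$, gives at once $x_{ij}^{\gamma_k}=\gamma_k x_{ij}\gamma_k=x_{ij}\gamma_k^2=x_{ij}$. Next, for $k=i$ I would substitute the flip relation (\ref{g8}), $x_{ij}=\gamma_i\gamma_j\,x_{ji}\,\gamma_j\gamma_i$, and cancel using (\ref{g5}) to obtain $x_{ij}^{\gamma_i}=\gamma_i(\gamma_i\gamma_j\,x_{ji}\,\gamma_j\gamma_i)\gamma_i=\gamma_j\,x_{ji}\,\gamma_j=x_{ji}^{\gamma_j}$. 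For $k=j$ the same substitution, now also using the commutativity (\ref{g6}) of the $\gamma$'s, yields $x_{ij}^{\gamma_j}=\gamma_j(\gamma_i\gamma_j\,x_{ji}\,\gamma_j\gamma_i)\gamma_j=\gamma_i\,x_{ji}\,\gamma_i=x_{ji}^{\gamma_i}$. This settles every value of $k$.

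I do not expect a real obstacle here: each of the three cases is a one-line manipulation. The only points deserving a little care are that the enumeration ``$i<k<j$ or $k<i$ or $k>j$'' is genuinely exhaustive among $k\neq i,j$ (consistent with the convention that distinct letters denote distinct indices), and that the order of the $\gamma$'s is kept straight when (\ref{g8}) is inserted. As a consistency check I would compose the last two rules to recover $x_{ij}^{\gamma_i\gamma_j}=x_{ji}$, which is just (\ref{g8}) with $i$ and $j$ interchanged (using (\ref{g6})), and confirm that each identity displayed in the example immediately preceding the lemma is an instance of these rules. An equivalent, slightly more hands-on route, matching the computation in that example, is to rewrite the relations (\ref{rel-height-sb}), (\ref{rel-bv}) and (\ref{rel-twist-III}) of $TVB_n$ in terms of the $x_{ij}$ and $\gamma_k$ and then conjugate by elements of $\langle\rho_1,\ldots,\rho_{n-1}\rangle$ to reach all cases for general $n$; this avoids appealing to Theorem~\ref{main2}.
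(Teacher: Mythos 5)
Your computation is correct and complete: all three cases follow from relations (\ref{g5})--(\ref{g8}) of Theorem \ref{main2} exactly as you carry them out, and your remark that the first line is simply the case $k\notin\{i,j\}$ (given the distinct-indices convention) is the right reading of the case distinction. The paper itself supplies no proof of Lemma \ref{conj} --- it only says the rules can be obtained by ``analysing'' the $n=3$ example, i.e.\ the hands-on route via the mixed relations of $TVB_n$ and conjugation by the $\rho$'s that you sketch as an alternative --- so your derivation from the presentation of $TVH_n$ is, if anything, the cleaner and more self-contained of the two.
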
 

Using this lemma and the presentation of $TVH_n$ it is easy to prove 


\begin{proposition}
The  kernel $HT_n=\ker(\varphi_{HT})$ is the group that is generated by the elements 
$$x_{ij},~~x^{\gamma_i}_{ij},~~ x^{\gamma_j}_{ij},~~1 \leq i\neq j \leq n,$$ 

with defining relations (\ref{comm-clas2}), (\ref{2classical2}), relations from Lemma \ref{conj}  and the following relations:
$$x_{ij}^{\gamma_m}x_{kl}^{\gamma_n} =x_{kl}^{\gamma_n}x_{ij}^{\gamma_m}, \text{ where } m\in \{i,j\}, \text{ and } n\in \{k,l\},$$

$$x_{ik}^{\gamma_i}x_{kj}x_{ik}^{\gamma_i} =x_{kj}x_{ik}^{\gamma_i}x_{kj}\text{ , }x_{ik}^{\gamma_k}x_{kj}^{\gamma_k}x_{ik}^{\gamma_k} =x_{kj}^{\gamma_k}x_{ik}^{\gamma_k}x_{kj}^{\gamma_k},$$

$$x_{ik}x_{kj}^{\gamma_j}x_{ik} =x_{kj}^{\gamma_j}x_{ik}x_{kj}^{\gamma_j} \text{ , }x_{ki}x_{kj}^{\gamma_k}x_{ki} =x_{kj}^{\gamma_k}x_{ki}x_{kj}^{\gamma_k},$$

$$x_{ik}^{\gamma_i}x_{kj}^{\gamma_j}x_{ik}^{\gamma_i} =x_{kj}^{\gamma_j}x_{ik}^{\gamma_i}x_{kj}^{\gamma_j} \text{ , }x_{ik}^{\gamma_k}x_{jk}x_{ik}^{\gamma_k} =x_{jk}x_{ik}^{\gamma_k}x_{jk},$$

$$x_{ki}x_{jk}x_{ki}=x_{jk}x_{ki}x_{jk}.$$
\end{proposition}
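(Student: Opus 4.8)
The plan is to realise $HT_n$ as a normal subgroup of $TVH_n$ of index $2^n$ and then to read off its presentation by applying the Reidemeister--Schreier method to the pair $HT_n\le TVH_n$, using the presentation of $TVH_n$ from Theorem~\ref{main2} and the conjugation rules of Lemma~\ref{conj}.

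First I would record that $HT_n\subseteq TVH_n$: the projection $TS_n=A_n\rtimes S_n\twoheadrightarrow S_n$ killing $A_n$, composed with $\varphi_{HT}$, is exactly $\varphi_H$, so $HT_n=\ker\varphi_{HT}\subseteq\ker\varphi_H=TVH_n$. Restricting, $\varphi_{HT}$ sends every $x_{ij}$ to $e$ (each $x_{ij}$ is a $\rho$-conjugate of some $\sigma_k$, and $\varphi_{HT}(\sigma_k)=e$) and $\gamma_j$ to $\gamma_j$, so $\varphi_{HT}|_{TVH_n}$ is the homomorphism $\psi_H\colon TVH_n\to A_n$ introduced just before Theorem~\ref{HL_n}; hence $HT_n=\ker\psi_H$ has index $|A_n|=2^n$ in $TVH_n$. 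As Schreier transversal I would take the $2^n$ monomials $T=\{\gamma_1^{\varepsilon_1}\cdots\gamma_n^{\varepsilon_n}\}$, which is a Schreier set for the generating set $\{x_{ij}\}\cup\{\gamma_j\}$ of $TVH_n$. The Reidemeister--Schreier generators $s_{t,\gamma_j}$ are then trivial, while $s_{t,x_{ij}}=t\,x_{ij}\,t^{-1}=x_{ij}^{\gamma_1^{\varepsilon_1}\cdots\gamma_n^{\varepsilon_n}}$; by Lemma~\ref{conj} conjugation by a $\gamma_k$ with $k\ne i,j$ has no effect, while $\gamma_i$, $\gamma_j$, $\gamma_i\gamma_j$ turn $x_{ij}$ into $x_{ij}^{\gamma_i}$, $x_{ij}^{\gamma_j}$, $x_{ji}$ respectively, so $s_{t,x_{ij}}$ depends only on $(\varepsilon_i,\varepsilon_j)$ and the generating set collapses to $\{x_{ij},x_{ij}^{\gamma_i},x_{ij}^{\gamma_j}\colon 1\le i\ne j\le n\}$, as claimed.

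For the relations, $HT_n$ is defined by the rewrites $\tau(t\,r\,t^{-1})$, $t\in T$, of the defining relations $r$ of $TVH_n$ from Theorem~\ref{main2}. The relations $\gamma_i^2=1$, $\gamma_i\gamma_j=\gamma_j\gamma_i$ and $x_{ij}\gamma_k=\gamma_k x_{ij}$ rewrite to trivial words (the $\gamma$-generators vanish, and in the last one $k\ne i,j$, so the extra $\gamma_k$-conjugation is trivial); the relations $x_{ij}=\gamma_i\gamma_j x_{ji}\gamma_j\gamma_i$ and their $T$-conjugates reduce, using $\gamma_i^2=1$, to the identities of Lemma~\ref{conj}; and conjugating (\ref{comm-clas2}) and (\ref{2classical2}) by the elements of $T$ and rewriting the exponents via Lemma~\ref{conj} produces the remaining displayed relations — the commutation relations $x_{ij}^{\gamma_m}x_{kl}^{\gamma_n}=x_{kl}^{\gamma_n}x_{ij}^{\gamma_m}$ together with (\ref{comm-clas2}), and the seven ``classical'' relations (the unconjugated case being (\ref{2classical2})). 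Conversely, every displayed relation is visibly one of these rewrites, so the list is complete.

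The main obstacle is precisely this last bit of bookkeeping: for the braid relation $x_{ik}x_{kj}x_{ik}=x_{kj}x_{ik}x_{kj}$, whose three indices $i,k,j$ overlap, one must track, for each triple $(\varepsilon_i,\varepsilon_k,\varepsilon_j)\in\{0,1\}^3$, how Lemma~\ref{conj} rewrites each letter $x_{ik}^{t^{-1}}$ and $x_{kj}^{t^{-1}}$ (noting e.g. $x_{ik}^{\gamma_i\gamma_k}=x_{ki}$ and $x_{kj}^{\gamma_k\gamma_j}=x_{jk}$), and then verify, after letting $\{i,j,k\}$ run over all orderings, that the resulting relations coincide exactly with those displayed and that nothing is omitted or spurious. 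This is routine but tedious; what makes it manageable — and the reason the paper can record it as easy — is exactly Lemma~\ref{conj}, which collapses the a priori $2^n$-fold redundancy among the Reidemeister--Schreier generators to the three families $x_{ij},x_{ij}^{\gamma_i},x_{ij}^{\gamma_j}$ and similarly trims the relations.
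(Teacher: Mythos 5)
Your proposal is correct and follows the paper's (implicit) argument: you identify $HT_n=\ker(\varphi_{HT})$ with $\ker(\psi_H)\le TVH_n$ of index $2^n$ and apply the Reidemeister--Schreier method with the Schreier transversal $\{\gamma_1^{\varepsilon_1}\cdots\gamma_n^{\varepsilon_n}\}$, using Lemma~\ref{conj} to collapse the conjugate generators to the three families $x_{ij},x_{ij}^{\gamma_i},x_{ij}^{\gamma_j}$ and to enumerate the conjugated relations, which is exactly the method the paper invokes (``using this lemma and the presentation of $TVH_n$''). The only cosmetic slip is the claim that the relations $x_{ij}\gamma_k=\gamma_k x_{ij}$ rewrite to trivial words: they rewrite to $x_{ij}^{\gamma_k}=x_{ij}$ for $k\neq i,j$, i.e.\ to part of the ``relations from Lemma~\ref{conj}'' that the presentation already includes, so the conclusion is unaffected.
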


\begin{corollary}
$TVB_n = HT_n \rtimes TS_n$.
\end{corollary}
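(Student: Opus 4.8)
The plan is to mimic exactly the argument that yields $TVB_n = PT_n \rtimes TS_n$ (the corollary following Proposition on $PT_n$), which itself is the twisted analogue of the decomposition $VB_n = VH_n \rtimes S_n$ recorded in Section \ref{BD}. The key structural inputs are: $HT_n = \ker(\varphi_{HT})$ is by definition a normal subgroup of $TVB_n$; the quotient $TVB_n / HT_n$ is isomorphic to $\operatorname{Im}(\varphi_{HT}) = TS_n$ by the Proposition on images; and $TS_n$ is literally a subgroup of $TVB_n$, generated by $\rho_1, \ldots, \rho_{n-1}, \gamma_1, \ldots, \gamma_n$. So the only thing to check is that the composite $TS_n \hookrightarrow TVB_n \twoheadrightarrow TVB_n / HT_n$ is an isomorphism, equivalently that $\varphi_{HT}$ restricted to $TS_n$ is the identity on $TS_n$ (hence injective on $TS_n$) and that $HT_n \cap TS_n = \{e\}$.

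First I would observe that $\varphi_{HT}$ fixes every generator of $TS_n$: indeed $\varphi_{HT}(\rho_i) = \rho_i$ and $\varphi_{HT}(\gamma_j) = \gamma_j$ by definition, so $\varphi_{HT}|_{TS_n} = \operatorname{id}_{TS_n}$. In particular $\varphi_{HT}$ is a retraction of $TVB_n$ onto the subgroup $TS_n$. For a retraction $r \colon G \to H \leq G$, one has automatically $G = \ker(r) \rtimes H$: every $g \in G$ factors as $g = \bigl(g \cdot r(g)^{-1}\bigr) \cdot r(g)$ with $g \cdot r(g)^{-1} \in \ker(r)$ and $r(g) \in H$, giving $G = \ker(r) \cdot H$; and if $h \in \ker(r) \cap H$ then $h = r(h) = e$, giving triviality of the intersection; normality of $\ker(r)$ is automatic. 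Applying this with $G = TVB_n$, $r = \varphi_{HT}$, $H = TS_n$, $\ker(r) = HT_n$ yields the claim.

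The one point that genuinely requires the preceding work is the verification that $\varphi_{HT}$ is a well-defined endomorphism of $TVB_n$ landing in $TS_n$ — that is, that the assignment on generators respects all defining relations \eqref{rel-inverse-b}–\eqref{rel-twist-III} of $TVB_n$; this is precisely the content of the Proposition stating $\operatorname{Im}(\varphi_{HT}) = TS_n \cong A_n \rtimes S_n$, where the only nontrivial relation to check for $\varphi_{PT}$ was \eqref{rel-twist-III} and for $\varphi_{HT}$ it is immediate since $\sigma_i \mapsto e$ collapses \eqref{rel-twist-III} to $\rho_i^2 = \gamma_{i+1}\gamma_i\gamma_i\gamma_{i+1} = e$, which holds by \eqref{rel-inverse-b}. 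Once well-definedness is in hand, the retraction argument above is the whole proof. I do not expect any serious obstacle here: unlike the \emph{presentation} of $HT_n$ (the preceding Proposition, which needs Lemma \ref{conj} and a Reidemeister–Schreier style bookkeeping of conjugates $x_{ij}^{\gamma_k}$), the semidirect product decomposition is a formal consequence of $\varphi_{HT}$ being a retraction onto $TS_n$, exactly parallel to the $PT_n$ and $TVP_n$ cases already established in the paper.
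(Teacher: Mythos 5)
Your argument is correct and is exactly the intended one: the paper states this corollary without proof, and the evident justification is precisely your observation that $\varphi_{HT}$ fixes the generators $\rho_i, \gamma_j$ of $TS_n$, hence is a retraction of $TVB_n$ onto $TS_n$, which formally yields $TVB_n = \ker(\varphi_{HT}) \rtimes TS_n = HT_n \rtimes TS_n$. No gaps; your additional remark that well-definedness of $\varphi_{HT}$ rests on the earlier Proposition about $\operatorname{Im}(\varphi_{HT})$ is also consistent with the paper's logic.
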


Likewise Proposition~\ref{isom1}, it can be proved the next proposition.

\begin{proposition} \label{isom2}
$HT_n \cong HL_n$ for all $n \geq 2$.
\end{proposition}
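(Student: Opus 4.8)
The plan is to mirror the proof of Proposition~\ref{isom1} verbatim, replacing the role of Lemma~\ref{conj1} by Lemma~\ref{conj} and the role of Theorem~\ref{PL_n}'s presentation of $PL_n$ by the presentation of $HL_n$ furnished by Theorem~\ref{HL_n}. First I would invoke Lemma~\ref{conj}: for $i>j$ the conjugation rules give $x_{ij}^{\gamma_i}=x_{ji}^{\gamma_j}$ and $x_{ij}^{\gamma_j}=x_{ji}^{\gamma_i}$, and consequently $x_{ij}=x_{ji}^{\gamma_j\gamma_i}=x_{ji}^{(ji)}$. This lets me eliminate, from the generating set of $HT_n$, all generators $x_{ij}^{\gamma_k}$ with $i>j$, keeping only the four families $x_{ij}=x_{ij}^{(0)}$, $x_{ij}^{\gamma_i}=x_{ij}^{(i)}$, $x_{ij}^{\gamma_j}=x_{ij}^{(j)}$, and $x_{ij}^{(ij)}=x_{ji}$ (which realizes $x_{ij}^{\gamma_i\gamma_j}$) for $1\le i<j\le n$; these are precisely the generators of $HL_n$ listed in Theorem~\ref{HL_n}.

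Next I would check that, under this identification of generating sets, the defining relations of $HT_n$ and of $HL_n$ coincide. On the $HL_n$ side the relations are exactly the conjugates of \eqref{2comm-clas}--\eqref{4classical} by elements of $A_n$; on the $HT_n$ side they are \eqref{comm-clas2}, \eqref{2classical2}, the relations of Lemma~\ref{conj}, and the explicitly listed commutation and braid-type relations among the $x_{ij}^{\gamma_m}$. The relations of Lemma~\ref{conj} are used up entirely in the elimination of the superfluous generators, so what remains is to match the two finite lists. Each conjugate $w^{-1}R\,w$ with $R$ one of \eqref{2comm-clas}--\eqref{4classical} and $w=\prod\gamma_t^{\varepsilon_t}$ rewrites, using $\gamma_i^2=e$, $\gamma_i\gamma_j=\gamma_j\gamma_i$ and $x_{ij}\gamma_k=\gamma_k x_{ij}$ for $k\notin\{i,j\}$, into a word in the chosen generators; I would organize the bookkeeping by the number and position of the indices $t$ appearing in $w$ relative to $\{i,j\},\{k,l\}$, exactly as in the proof of Theorem~\ref{PL_n}, and observe that the resulting words are term-for-term the relations listed in the Proposition displaying the presentation of $HT_n$. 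Since generators and relations agree, the obvious bijection on generators extends to an isomorphism $HT_n\cong HL_n$.

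The main obstacle is purely organizational: one has to be confident that \emph{no} relation is lost or duplicated in passing between the two presentations, i.e. that the set of $A_n$-conjugates of \eqref{2comm-clas}--\eqref{4classical} is, after rewriting, \emph{exactly} the list of commutation relations $x_{ij}^{\gamma_m}x_{kl}^{\gamma_n}=x_{kl}^{\gamma_n}x_{ij}^{\gamma_m}$ together with the eight braid-type relations displayed for $HT_n$ — neither more nor fewer, modulo the trivial consequences of \eqref{g5}--\eqref{g7}. Here the key simplification is that a conjugate of a relation in which \emph{none} of the indices $i,j,k,l$ occurs in $w$ is just the relation itself, and a conjugate in which \emph{all} relevant indices occur reduces to the ``fully twisted'' version \eqref{4classical}; the genuinely new relations come only from the ``mixed'' conjugations, and there are finitely many of those, matching the displayed list. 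Because this is the same combinatorial accounting already carried out for $PL_n$ versus $PT_n$ in Proposition~\ref{isom1}, it suffices to say ``likewise Proposition~\ref{isom1}'' and indicate the one substitution ($\lambda\leadsto x$, Lemma~\ref{conj1}$\leadsto$Lemma~\ref{conj}), which is what the paper does.
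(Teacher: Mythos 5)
Your proposal is correct and takes essentially the same route as the paper: the paper's entire proof of this proposition is the remark ``likewise Proposition~\ref{isom1}'', and your argument is exactly that proof transported from the $\lambda$-generators to the $x$-generators via Lemma~\ref{conj}, with the same Tietze eliminations $x_{ij}^{\gamma_i}=x_{ji}^{\gamma_j}$, $x_{ij}^{\gamma_j}=x_{ji}^{\gamma_i}$, $x_{ij}=x_{ji}^{\gamma_j\gamma_i}$ for $i>j$ identifying the generating set of $HT_n$ with that of $HL_n$. You in fact record more of the relation-matching bookkeeping than the paper itself does.
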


From results of this paper we get the follows proposition

\begin{proposition} 
1) In the next diagram rows and columns are short  exact sequences:
$$
\begin{matrix} 
 & & 1 & & 1 & & 1 &  \\
  &   & \downarrow &   & \downarrow &   & \downarrow &  &   \\ 
 1  &  \longrightarrow  & PL_n = \ker(\psi_P)&  \overset{\cong}{\longrightarrow}  & PT_n = \ker(\varphi_{PT}) &  \longrightarrow  &  1 &  \longrightarrow & 1  \\ 
  &   & \downarrow &   & \downarrow &   & \downarrow &  &   \\  
1  &  \longrightarrow  & TVP_n = \ker(\varphi_{P})  &  \longrightarrow  & TVB_n &  \overset{\varphi_{P}}{\longrightarrow} & S_n &  \longrightarrow & 1  \\ 
  &   & \downarrow {\text{\tiny $\psi_P$} } &   & \downarrow  {\text{\tiny $\varphi_{PT}$} } &   & \| &  &   \\ 
1  &  \longrightarrow  & A_n &  \longrightarrow  & TS_n &  \longrightarrow  & S_n &  \longrightarrow & 1  \\ 
  &   & \downarrow &   & \downarrow &   & \downarrow &  &   \\ 
 & & 1 & & 1 & & 1 &  \\
    \end{matrix}
$$

2) In the next diagram rows and columns are short  exact sequences:
$$
\begin{matrix} 
 & & 1 & & 1 & & 1 &  \\
  &   & \downarrow &   & \downarrow &   & \downarrow &  &   \\ 
1  &  \longrightarrow  &HL_n = \ker(\psi_T) & \overset{\cong}{\longrightarrow} & HT_n = \ker(\varphi_{HT}) &  \longrightarrow  &  1 &  \longrightarrow & 1  \\ 
  &   & \downarrow &   & \downarrow &   & \downarrow &  &   \\  
1  &  \longrightarrow  & TVH_n = \ker(\varphi_{T}) &  \longrightarrow  & TVB_n &  \overset{\varphi_{T}}{\longrightarrow}  & S_n &  \longrightarrow & 1  \\ 
  &   & \downarrow {\text{\tiny $\psi_T$} } &   & \downarrow {\text{\tiny $\varphi_{HT}$} } &   & \| &  &   \\ 
1  &  \longrightarrow  & A_n &  \longrightarrow  & TS_n &  \longrightarrow  & S_n &  \longrightarrow & 1  \\ 
  &   & \downarrow &   & \downarrow &   & \downarrow &  &   \\ 
 & & 1 & & 1 & & 1 &  \\
    \end{matrix}
$$
\end{proposition}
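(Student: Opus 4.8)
The strategy is to recognise every row and column of each $3\times 3$ array as a short exact sequence already obtained in the paper, and then to check commutativity of the four inner squares by evaluating the relevant homomorphisms on the generators of $TVB_n$; essentially no new computation is required.

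Take diagram~1. The middle row $1 \to TVP_n \to TVB_n \overset{\varphi_P}{\longrightarrow} S_n \to 1$ is exact because $TVP_n = \ker(\varphi_P)$ and $\varphi_P$ is onto. The bottom row $1 \to A_n \to TS_n \to S_n \to 1$ is the split exact sequence coming from $TS_n = A_n \rtimes S_n$ (with $A_n \cong \mathbb{Z}_2^n$ by Corollary~\ref{isom}); the map $TS_n \to S_n$ is the projection of this semidirect product, which on generators is $\rho_i \mapsto \rho_i$, $\gamma_j \mapsto e$, hence agrees with the restriction of $\varphi_P$ to $TS_n$. The left column $1 \to PL_n \to TVP_n \overset{\psi_P}{\longrightarrow} A_n \to 1$ is exact since $\psi_P$ fixes each $\gamma_j$, hence is onto, and $\ker(\psi_P) = PL_n$ by definition; this is precisely the splitting $TVP_n = PL_n \rtimes A_n$. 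The middle column $1 \to PT_n \to TVB_n \overset{\varphi_{PT}}{\longrightarrow} TS_n \to 1$ is exact because $\zIm(\varphi_{PT}) = TS_n$ and $PT_n = \ker(\varphi_{PT})$, with splitting $TVB_n = PT_n \rtimes TS_n$. The right column $1 \to 1 \to S_n \overset{\id}{\longrightarrow} S_n \to 1$ is trivially exact.

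It remains to handle the top row and commutativity, and the only identity one needs is the factorisation $\varphi_P = \pi \circ \varphi_{PT}$, where $\pi \colon TS_n \to S_n$ is the projection; this holds on generators, $\sigma_i \mapsto \rho_i \mapsto \rho_i$, $\rho_i \mapsto \rho_i \mapsto \rho_i$, $\gamma_j \mapsto \gamma_j \mapsto e$. It follows that $PT_n = \ker(\varphi_{PT}) \subseteq \ker(\varphi_P) = TVP_n$; moreover $\varphi_{PT}$ kills every $\lambda_{kl}$ (each being a product of $\rho_m$'s together with $\lambda_{i,i+1} = \rho_i\sigma_i^{-1}$, which maps to $\rho_i\rho_i^{-1} = e$) and fixes every $\gamma_j$, so $\varphi_{PT}|_{TVP_n}$ coincides with $\psi_P$. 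Hence $PL_n = \ker(\psi_P) = TVP_n \cap \ker(\varphi_{PT}) = PT_n$, so the top row $1 \to PL_n \overset{\cong}{\longrightarrow} PT_n \to 1 \to 1$ is exact with the map the identity (equivalently, the isomorphism of Proposition~\ref{isom1}). Commutativity of the four inner squares is now formal: the top-left square commutes since $PL_n = PT_n \subseteq TVP_n \subseteq TVB_n$ with all arrows inclusions; the top-right square commutes since $PT_n \subseteq \ker(\varphi_P)$; the bottom-left square commutes since $\psi_P = \varphi_{PT}|_{TVP_n}$ (as maps into $TS_n$ via $A_n \hookrightarrow TS_n$); and the bottom-right square commutes since $\varphi_P = \pi \circ \varphi_{PT}$ and the lower horizontal map $TS_n \to S_n$ is $\pi$. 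This proves part~1.

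Part~2 follows by the verbatim argument with $H$ replacing $P$ (in the diagram $\varphi_T$, $\psi_T$ denote $\varphi_H$, $\psi_H$): one uses $TVH_n = \ker(\varphi_H)$, the splittings $TVH_n = HL_n \rtimes A_n$ and $TVB_n = HT_n \rtimes TS_n$, the surjectivity $\zIm(\varphi_{HT}) = TS_n$, the factorisation $\varphi_H = \pi \circ \varphi_{HT}$ (on generators $\sigma_i \mapsto e$, $\rho_i \mapsto \rho_i$, $\gamma_j \mapsto \gamma_j \mapsto e$), the identity $\varphi_{HT}|_{TVH_n} = \psi_H$ (since $\varphi_{HT}$ kills each $x_{kl}$, as $x_{i,i+1} = \sigma_i \mapsto e$), and the consequent equality $HL_n = HT_n$ (equivalently Proposition~\ref{isom2}). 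I do not anticipate a genuine obstacle: the only point demanding attention is the bookkeeping guaranteeing that the vertical maps of the top two rows are compatible, i.e. the two generator-level checks $\varphi_{PT}|_{TVP_n} = \psi_P$ and $\varphi_P = \pi \circ \varphi_{PT}$, together with their $H$-analogues.
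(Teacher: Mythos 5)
Your proposal is correct and follows exactly the route the paper intends: the paper states this proposition without proof as a summary of its earlier results, and your argument assembles precisely those results (the kernels and semidirect decompositions of $TVP_n$, $TVH_n$, $PL_n$, $HL_n$, $PT_n$, $HT_n$, $TS_n$) together with the generator-level checks $\varphi_P=\pi\circ\varphi_{PT}$ and $\varphi_{PT}|_{TVP_n}=\psi_P$ and their $H$-analogues. Your observation that $PL_n$ and $PT_n$ actually coincide as subgroups of $TVB_n$ (not merely that they are abstractly isomorphic) is a welcome sharpening that makes the top row and the commutativity of the squares transparent.
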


\bigskip

\section*{Concluding remarks}
In this paper, we study twisted virtual braid group and some of its subgroups. As a future work, it will be interesting to find an answers on the next questions.

1) There exists some representations of $VB_n$ by automorphisms of some groups \cite{BN, BMN, BKV}. Is it possible to extend these representations to representations of $TVB_n$?

2) M. O. Bourgoin \cite{Bour} introduced twisted knot theory as a generalization of the classical knot theory. Is it possible to use representations of $TVB_n$ construct group invariants for twisted virtual knots? For virtual knots, this has been done in the work \cite{BMN-1}.

3) The concept of a subgroup of  camomile-type were introduced in \cite{BK-1} (see also \cite{BW}). Is it true that $TVP_n$, $n\geq 4$ is a subgroup of camomile type in $TVB_n$ with the highlighted petal $TVP_4$?


\end{document}